\def\URL#1{{\def\-{\discretionary{\kern 0pt}{}{}} #1}}
\newcommand{\R}{\mathbb{R}}
\newcommand{\N}{\mathbb{N}}
\newcommand{\oRp}{\overline{\mathbb{R}}_+}
\newcommand{\Rp}{{\mathbb{R}}_+}
\newcommand{\dbdownarrow}{\rlap{\raise.25ex\hbox{$\shortdownarrow$}}\raise-.25ex\hbox{$\shortdownarrow$}}
\newcommand{\dda}{\rlap{\raise.25ex\hbox{$\shortdownarrow$}}\raise-.25ex\hbox{$\shortdownarrow$}}
\newcommand{\dua}{\rlap{\raise-.25ex\hbox{$\shortuparrow$}}\raise.25ex\hbox{$\shortuparrow$}}
\newcommand{\dbuparrow}{\rlap{\raise-.25ex\hbox{$\shortuparrow$}}\raise.25ex\hbox{$\shortuparrow$}}
\renewcommand{\odot}{\cdot_{_{_{\mbox{}\!\!\!\!P}}}\kern-.2em}
\renewcommand{\boxdot}{\cdot_{_{_{\mbox{}\!\!\!\!H}}}\kern-.2em}
\renewcommand{\boxplus}{+_{_{_{\mbox{}\!\!\!H}}}\kern-.2em}
\newcommand{\cO}{\mathcal{O}}
\newcommand{\cRI}{\mathcal{RI}}
\newcommand{\llcurly}{\!\prec\!\!\prec}
\newcommand{\da}{\mathord{\downarrow}}
\newcommand{\ua}{\mathord{\uparrow}}
\newcommand{\norm}[1]{\|#1\|}
\newcommand{\dsup}{\mathop{\bigvee{}^{^{\,\makebox[0pt]{$\scriptstyle\uparrow$}}}}}
\newtheorem{satz}{Satz}[section]
\newtheorem{proposition}[satz]{Proposition}
\newtheorem{corollary}[satz]{Corollary}
\newtheorem{lemma}[satz]{Lemma}
\newtheorem{example}[satz]{Example}
\newtheorem{remark}[satz]{Remark}
\newtheorem{theorem}[satz]{Theorem}
\newtheorem{question}[satz]{Question}
\newtheorem{separation}[satz]{Separation Property}
\newtheorem{observation}[satz]{Observation}
\theoremstyle{margin}
\newcommand\LSC{\mathrm{LSC}}
\newcommand\MON{\mathrm{MON}}
\begin{document}
\title{%
The Cuntz semigroup and domain theory}

\author{Klaus Keimel}

\date{\today}

\maketitle

\vspace{1cm}

\begin{abstract}
Domain theory has its origins in Mathematics and Theoretical Computer
Science. Mathematically it combines order and topology. Its central
concepts have their origin
in the idea of approximating ideal objects by their relatively
finite or, more generally, relatively compact parts. 

The development of domain theory in recent years was mainly motivated
by question in denotational semantics and the theory of computation. But since
2008, domain theoretical notions and methods are used in the theory of
C$^*$-algebras in connection with the Cuntz semigroup.

This paper is largely expository. It presents those notions of domain
theory that seem to be relevant for the theory of Cuntz semigroups and
have sometimes been developed independently in both communities. It
also contains a new aspect in presenting results of Elliott,
Ivanescu and Santiago on the cone of traces of a C$^*$-algebra as a
particular case of the dual of a Cuntz semigroup.  
\end{abstract}

\section{Introduction}

Continuous lattices have emerged in quite distant areas under various
disguises, and the equivalence of the different definitions is not
straightforward. The two main sources are in topological algebra on
the one hand and in semantics of untyped $\lambda$-calculus at the other
hand.

In 1974, published in 1976 \cite{HS}, K.~H.~Hofmann and A.~R.~Stralka
arrived at the characterization that is now adopted generally.
In this work
on compact semilattices, it was their aim to characterize order
theoretically those compact Hausdorff semilattices that admit a separating
family of continuous semilattice homomorphism into the unit interval
$[0,1]$. These compact semilattices were also called \emph{Lawson
  semilattices}. The 
following relation turned out to be crucial (see 
\cite[p. 27, lines 20ff.]{HS}: For elements $x,y$ in a complete lattice they say
that $x$ is \emph{relatively compact in} $y$ if every open covering
$(u_i)_i$ of $y$ (aka $y\leq \sup_i u_i$) contains a finite subcover
$u_{i_1}, \dots,u_{i_n}$ of $x$ (aka $x\leq u_{i_1}\vee\dots\vee u_{i_n}$).   
This terminology was chosen since, in the lattice of open subsets of a
locally compact Hausdorff space, this relative compactness notion
agrees with the common use of relative compactness 
in topology: An open subset $V$ is relatively compact in an open
subset $W$ if the closure of $V$ is compact and contained in $W$. The
Lawson semilattices were characterized to be those
complete lattices, where each 
element is the supremum of its relatively compact parts, and they
called these lattices \emph{relatively algebraic}. Later on,
terminology changed: \emph{relatively compact in} was replaced by the
shorter \emph{way-below}.

Two years before, in 1972,  D.~S.~Scott's seminal paper \cite{Sc}
with the title \emph{Continuous Lattices} had appeared. In this paper Scott
provided the first models for the untyped $\lambda$-calculus using
what he had called \emph{continuous lattices}. It took some time until
the attention of the compact semilattice community was drawn towards
Scott's paper. It was only shortly before the appearance of \cite{HS}
in 1976 that it was discovered that Scott's continuous lattices were
precisely the relatively algebraic lattices in the sense of Hofmann
and Stralka.

Continuous lattices were mainly used in denotational semantics
of programming languages. In view of those applications a
generalization from complete lattices to directed complete partially
ordered sets (\emph{dcpos}, for short) was needed. Because of the lack of
finite suprema, the relation $x$ way-below $y$ had to be defined by
saying that every  directed family $(u_i)_i$ covering $y$ (aka $y\leq
\sup_i u_i$) contains an element $u_{i_o}$ covering $x$ (aka, $x\leq
u_{i_o}$), and a dcpo was said 
to be a \emph{continuous dcpo} (a \emph{domain}, for short), if each of its
elements $y$ is the supremum of a directed family of elements $x_i$
way-below $y$. The term 'domain' has its origin in the use
of these structures as semantic domains.

The author recently discovered that domain theoretic notions and
constructions are used in the theory of C$^*$-algebras. These
developments were initiated in a paper by Coward, Elliot and Ivanescu
\cite{CEI} in 2008. Their aim was to introduce a new invariant for
C$^*$-algebras that is finer than the K-groups. This invariant is
called the Cuntz semigroup and is a kind of completion of the
classical ordered semigroup introduced by J. Cuntz \cite{Cu78} in
1978. In \cite{CEI} and the follow-up papers
domain theory is not used in its classical form. A variant is
considered where the set system of directed subsets is replaced by
increasing sequences or, equivalently, by countable directed
sets. Thus, partially ordered sets are considered in which not all 
directed sets but only increasing sequences are required to have a
least upper bound. An element $x$ is said to be \emph{compactly
  contained in} $y$ if, 
every increasing sequence $u_n$ covering $y$ (aka $y\leq \sup_n u_n$)
contains an element $u_{n_0}$ already covering $x$ (aka $x\leq
u_{n_0}$). The Cuntz semigroup $S$ of a C$^*$-algebra as introduced in
\cite{CEI} has the following properties among others:

(O0) $S$ is a partially ordered commutative monoid with $0$ as
smallest element, 

(O1) every increasing sequence has a least upper bound, 

(O2) every element $y$ is the least upper bound of an increasing
sequence of elements $x_n$ compactly contained in $y$, 

(O3) if $x_i$ is compactly contained in $y_i$ for $i=1,2$, then
$x_1+x_2$ is compactly contained in $y_1+y_2$,       

(O4) addition is continuous in the sense that it preserves suprema of
increasing sequences.\\
A structure with these properties is then called an \emph{abstract
  Cuntz semigroup}.

A whole series of papers has appeared since that time with further
developments. The author of these lines has been working in domain
theory for more than 30 years. He discovered the new developments
around the Cuntz semigroups through a paper by Antoine, Perera and
Thiel \cite{APT}. It turns out that domain theoretical concepts and
methods play a more important role than expected. Quite some
properties have been rediscovered, other developments occur in 
parallel to developments in domain theory. 

This paper is largely expository. Its purpose is to establish a common
platform for communication between domain theory and the community
working on Cuntz semigroups. But it also pursues a specific purpose:
In 2011, Elliott, Robert and Santiago \cite{ERS} have published results on the
space of lower semicontinuous traces and
2-quasi-traces  of C$^*$-algebras. The proofs for the two cases seem
to follow a common pattern. The same pattern can be found in a paper
by Plotkin in 2009 \cite{Plo} on a
Banach-Alaoglu type theorems for continuous directed complete
partially ordered cones. Plotkin's results and methods
have been refined and generalized by the author just recently \cite{alaoglu}.
These results when specialized to abstract Cuntz semigroups give a
unified proof for the results of Elliott, Robert and Santiago. For
this, we show how the positive cone of a C$^*$-algebra can be viewed
as an abstract Cuntz semigroup. It is not amazing that the ingredients
for our proof can all be found in the paper of Elliott, Robert and
Santiago.  

In this presentation, I do not adopt the countable variant of domain
theory as used in the C$^*$-algebra community. I use dcpos, the
way-below relation and domains as in the monograph \cite{compend}. The
future will show, if I will be convinced to change to the countable
point of view. It is well-known that other subset systems 
can be used instead of directed sets, and quite analogous developments
can be carried through. One may consult a survey by Ern\'e \cite{E2}
on such variants of domain theory. 

The authors from the C$^*$-algebra community avoid the term
'way-below' as if it would be contagious. They use 'compactly contained
in', sometimes 'far below'. I do not mind other terminologies, but
remain with 'way-below' from time to time, and I hope that nobody
feels uneasy about it. \\

{\sc Acknowledgment} I am grateful to Hannes Thiel for corrections
and useful suggestions.

\section{Predomains and c-spaces}

We want to stress the concept of a predomain. 
In the same way as Hilbert spaces are completions of pre-Hilbert
spaces, domains are obtained from predomains by a completion process,
the round ideal completion. Domains can be defined in terms of
partial orders but have a strong topological 
flavor. Similarly, predomains occur under two different but
equivalent disguises: as relational and as topological structures. 

The notion of a predomain is not new at all. It is motivated by the 
notion of a basis for domains. 
This notion has been axiomatized as a relational structure first by
M. Smyth \cite{Sm} (under the name of an $R$-structure) and it occurs
under the name of an \emph{abstract basis} in standard texts on Domain
Theory, most prominently in the Handbook article by Abramsky and Jung
\cite[Section 2.2.6]{AJ}, where abstract bases are used for free 
constructions \cite[Chapter 6]{AJ}. This aspect has been rediscovered
by Antoine, Perera and Thiel \cite{APT} for constructing tensor
products of abstract Cuntz semigroups. The
topological variant is due to Ern\'e \cite{E,E1} under the name of a
c-space and independently to Ershov \cite {Er,Er1} under the name of
an $\alpha$-space. It was Ershov that insisted on omitting the
completeness properties required for domains. He had advocated this
aspect already in his early work on computable functionals of higher
type; his f-spaces and a-spaces are early manifestations (see
\cite{Er3,Er2}).  

It seems to me that these concepts have not yet
attracted the attention that they deserve. The defining properties are
amazingly simple 
and at the same time as powerful as those of domains. For this
reason, I propose a new name that stresses the importance by
calling them  \emph{predomains}.  

\subsection{Predomains}

Let us concentrate first on the relational aspect.
A \emph{predomain} is a set $P$ equipped with a binary relation $\llcurly$
that is transitive 
\[a\llcurly b\llcurly c \implies a \llcurly c \eqno{\rm (Trans)}\] 
and satisfies the following \emph{interpolation 
  property} for every finite subset $F$ and every element $c$: 
\[ F\llcurly c \implies  \exists b\in P.\   F\llcurly b\llcurly c
\eqno{\rm (IP)}\] 
where $F\llcurly c$ is an abbreviation for '$a\llcurly c$ for all $a\in
F$'.  

For $F$ we may choose the empty set and in this case the interpolation
property says:
\[\forall c.\ \exists b.\ b\llcurly c\eqno{\rm (IP0)}\]
Choosing $F$ to be a singleton, the interpolation property above
implies the ordinary interpolation property  
\[ a\llcurly c \implies \exists b.\ a\llcurly b\llcurly c\eqno{\rm (IP1)}\]
Choosing $F$ to be a two element set, the interpolation property
reads:
\[a_i\llcurly c\ (i=1,2) \implies \exists b.\ a_i\llcurly
b\llcurly c\ (i=1,2) \eqno{\rm (IP2)}\]
Clearly (IP0)and (IP2) together are equivalent to (IP).
We use the notation:
\[\dda c = \{b\in P\mid b\llcurly c\},\ \ \ \dua c = \{a\in P\mid
c\llcurly a\}\] 

The following is our basic example:
\begin{example}\label{ex:basic}
Let $X$ be a locally compact Hausdorff space, $C_0(X)$ the
$C^*$-algebra of all complex valued continuous functions defined on $X$ that
vanish at infinity. Its positive cone $C_0(X)_+$ consisting of those
$f\in C_0(X)$  with nonnegative real values is a poset with the usual
pointwise order $f\leq g$ if $f(x)\leq g(x)$ for all $x$. There is a
natural predomain structure on 
$C_0(X)_+$ defined by \[f\llcurly g \mbox{ if } f\leq (g-\varepsilon)_+\]
where $(g-\varepsilon)_+$ is the function with value $\max(g(x)-\varepsilon,0)$
for every $x\in X$.
\end{example}

For the relation $\llcurly$ on a predomain $P$ we use a terminology
borrowed from the partially ordered sets: A subset $D$ of $P$ is said
to be \emph{$\llcurly$-directed} if, for every finite subset $F$ of $D$,
there is an element $c\in D$ such that $F\llcurly c$. 

A subset $D'$ of a $\llcurly$-directed set $D$ is said to be 
\emph{$\llcurly$-cofinal} if, for every
$d\in D$ there is a $d'\in D'$ such that $d\llcurly
d'$. Such a $\llcurly$-cofinal subset $D'$ is also
$\llcurly$-directed. Indeed, for a finite subset $F\subseteq
D'\subseteq D$ there is an $d\in D$ such that $F\llcurly d$ and,
choosing an element $d'\in D'$ such that $d\llcurly d'$ we obtain
$F\llcurly d'$.  

A subset $Q$ of a predomain $P$ is said to be \emph{$\llcurly$-dense}
if, whenever $a\llcurly c$ holds for elements in $P$, there is an
element $b\in Q$ such that $a\llcurly b\llcurly c$. 

\begin{remark}\label{rem:dense}
A $\llcurly$-dense subset $Q$ of a predomain $P$ is a predomain when
equipped with the relation $\llcurly$ restricted to $Q$ and, for every
$c\in P$, the set $\dda_Q c = \dda c\cap Q$ is cofinal in $\dda c$.
\end{remark}

Clearly the restriction of $\llcurly$ to $Q$ is transitive. For the
interpolation property (IP) consider a finite subset $F$
of $Q$ and suppose $F\llcurly c$ for some $c\in Q$, then
$F\llcurly b \llcurly c$ for some $b\in P$ by (IP) and so we can find an
element $b'\in Q$ such that $b\llcurly b'\llcurly c$, whence
$F\llcurly b'\llcurly c$.

\subsection{Continuous posets and domains}
Let $(P,\leq)$ be a partially ordered set (\emph{poset}, for short). For
elements $a,b$ in $P$ we say that $a$ is relatively compact in $b$
($a$ is \emph{way-below} $b$, for short) and we write
$a\ll b$ if, for every directed subset $D$ such that $b\leq \sup D$,
there is an element $d\in D$ with $a\leq d$, whenever $D$ has a least
upper bound $\sup D$ in $P$. We say that $P$ is a \emph{continuous
  poset} if for every 
element $b\in P$ the set \[\dda b = \{a\in P\mid a\ll b\} \] is
directed and $b=\sup \dda b$. 

In a continuous poset, if $a\ll b$ and if $D$ is a directed subset
such that $b\leq \sup D$, there is a $d\in D$ such that even $a\ll d$.

If $(P,\leq)$ is a partially ordered set such that every directed
subset has a supremum, we say that $P$ is \emph{directed complete} (a
\emph{dcpo} for short). A continuous dcpo is called a \emph{domain}.

The relation $\ll$ in a poset $P$ has the following properties:
\begin{eqnarray}
a\ll c &\implies a\leq c\\
a\ll b\leq c &\implies a\ll c\\
d\leq a\ll b &\implies d\ll b
\end{eqnarray}

\begin{remark}
Every continuous poset is a predomain, when equipped with its
relation $\ll$.
\end{remark}

\begin{proof}
For transitivity, suppose that $a\ll b\ll c$. Then $b\leq c$ by
property (1), whence $a\ll c$ by property (2). For the interpolation
property (IP) let $F\ll a$ for a finite subset $F$. The family of sets
$\dda b$ with $b\ll a$ is directed, and each of the sets $\dda b$ is
directed. Thus $D = \bigcup_{b\ll a}\dda b$ is 
directed, too, and $\sup D = a$. For every $f\in F$ we  have $f\ll
a$. Thus, there is an element 
$d_f\in D$ with $f\leq d_f$. Since $D$ is directed, we find an element
$d\in D$ such that $f\leq d$ for every element $f$ in the finite set
$F$. Since for $d\in D$ there is an element $b$ such that
$d\ll b\ll a$, we have $F\ll b\ll a$.
\end{proof} 

Any $\ll$-dense subset $B$ of a continuous poset $P$ is called a
\emph{basis} of $P$. By remark \ref{rem:dense} every basis $B$ is a
predomain for 
the relation $\ll$ restricted to $B$; for every $c\in P$, the set
$\dda_{\!B} c = \dda c\cap B$ is directed and cofinal in $\dda c$ so that
$c=\sup \dda_{\!B} c$.

 But there are important predomain structures for which
relation $\llcurly$ is not derived from a partial order as above. This
is illustrated best by our basic example \ref{ex:basic} of the cone
$C_0(X)_+$ of nonnegative continuous real valued functions vanishing
at infinity on a locally compact Hausdorff space $X$. This cone
carries a natural pointwise order $f\leq g$ if $f(x)\leq g(x)$ for all
$x\in X$. The predomain relation $\llcurly$ does not agree with the
relation $\ll$ on   $C_0(X)_+$ derived from the partial order except
for very special cases. Let us choose $X$ to be the unit interval with
its usual compact Hausdorff topology where we denote by $\mathbf 1$ the
constant function with value $1$. Then $(1-\varepsilon)\mathbf 1
\llcurly \mathbf 1$, but  $(1-\varepsilon)\mathbf 1
\not\ll \mathbf 1$. Indeed, $f_n(x) =x^{1/n}$ is an increasing
sequence of continuous functions and $\mathbf 1$ is the least upper
bound of this sequence in the poset $(C_0([0,1])_+,\leq)$ (although not the
pointwise supremum) and $(1-\varepsilon)\cdot\mathbf 1)\not\leq f_n$ for
all $n$. Thus $(1-\varepsilon)\cdot\mathbf 1)\not\ll \mathbf 1$. By a
similar argument one can show that
there is no $f\ll\mathbf 1$ except for the constant function $0$.

The point in the example $(C_0(X)_+,\leq)$ is that there is a
difference between least upper bounds in the poset 
$(C_0(X)_+,\leq)$ and pointwise least upper bounds. We say that a function
$f\colon X\to\Rp$ vanishing at infinity is the pointwise supremum of
an increasing sequence or a directed 
family of functions $f_i$ in $C_0(X)_+$ if $f(x)=sup_if_i(x)$ for
every $x\in X$. By Dini's theorem, $f$ is then continuous and the
$f_i$ converge to $f$ uniformly. The predomain relation $\llcurly$ may
be defined by using this strengthened notion of pointwise least upper
bound instead of the notion of a least upper bound in the poset
$(C_0(X)_+,\leq)$. 

It is important to consider predomain structures $\llcurly$ not derived
from partial orders as $\ll$ in the case of continuous
posets. In the contrary, partial orders can be derived from predomain
structures as we will see. Predomains are more general and may be more important
than continuous posets. 

In the same vein, I propose to replace the notion of a
preCuntz semigroup as considered in \cite[Definition 2.1]{ABP} by a
more appropriate structure: commutative predomain monoids with an
additive relation $\llcurly$ (see below \ref{subsec:precuntz}).

\subsection{The round ideal completion}\label{subsec:2.3}

We have seen that every domain $D$ is a predomain for its way-below relation. 
More importantly, predomains occur as bases of domains. 
Let us see that every predomain has a completion which is a domain.

A \emph{round ideal} is a subset $J$ of a predomain $P$ with the
following properties: (1) $J$ is $\llcurly$-directed and (2) if
$a\llcurly b\in J$, then $a\in J$. This is equivalent to the
requirement that a finite
subset $F$ of $P$ is contained in $J$ if and only if there is an
element $b\in J$ such that $F\llcurly b$. 

For every element $b\in P$, the set 
\[\dda b = \{a\in P\mid a\llcurly b\}\] 
is a round ideal.

\begin{proposition}\label{prop:ricompletion}
The set $\cRI(P)$ of all round ideals of a predomain $P$ ordered by
inclusion is a domain, called the \emph{round ideal completion} of the
predomain $P$. 
The way-below relation on $\cRI(P)$ is given by $I\ll J$ if
there is an element $b\in J$ such that $I\subseteq \dda b$. The round
ideals $\dda a$, $a\in P$, form a basis of the round ideal
completion.
\end{proposition}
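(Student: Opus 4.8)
The plan is to prove the three assertions in turn, exploiting the two defining closure features of round ideals: roundness (downward $\llcurly$-closure) and $\llcurly$-directedness, with the interpolation property (IP) entering only through the already-established fact that each $\dda b$ is a round ideal.

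First I would check that $\cRI(P)$ is a dcpo. Given a family $(J_i)_i$ of round ideals directed under inclusion, I would show its union $J=\bigcup_i J_i$ is again a round ideal: for a finite $F\subseteq J$, directedness of the family places $F$ inside a single $J_{i_0}$, whose $\llcurly$-directedness supplies $c\in J_{i_0}\subseteq J$ with $F\llcurly c$; and if $a\llcurly b\in J$ then $b\in J_i$ for some $i$ and roundness of $J_i$ gives $a\in J_i\subseteq J$. Since the union is the least upper bound in the powerset, directed suprema in $\cRI(P)$ are computed as unions, so $\cRI(P)$ is directed complete.

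The crucial representation, and the step I expect to be the heart of the argument, is that every round ideal $J$ equals the directed union $\bigcup_{b\in J}\dda b$. The family $\{\dda b\mid b\in J\}$ is directed: for $b_1,b_2\in J$, $\llcurly$-directedness yields $c\in J$ with $b_1,b_2\llcurly c$, and transitivity gives $\dda b_1\cup\dda b_2\subseteq\dda c$. For the equality, roundness gives $\dda b\subseteq J$ for each $b\in J$, while for $a\in J$ the $\llcurly$-directedness applied to the singleton $\{a\}$ produces $b\in J$ with $a\llcurly b$, i.e. $a\in\dda b$. From this I read off the way-below characterization. If $b\in J$ and $I\subseteq\dda b$, then for any directed family with $J\subseteq\bigcup_k J_k$ we have $b\in J_{k_0}$ for some $k_0$, and roundness of $J_{k_0}$ forces $\dda b\subseteq J_{k_0}$, so $I\subseteq J_{k_0}$; hence $I\ll J$. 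Conversely, applying $I\ll J$ to the directed family $(\dda b)_{b\in J}$, whose supremum is $J$, produces $b\in J$ with $I\subseteq\dda b$.

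Continuity then follows quickly. Taking the witness equal to $b$ in the characterization shows $\dda b\ll J$ for every $b\in J$, so the directed family $\{\dda b\mid b\in J\}$ consists of elements way-below $J$ and has supremum $J$. To upgrade this to directedness of the full set $\{I\mid I\ll J\}$ with supremum $J$, I would take $I_1,I_2\ll J$, choose $b_1,b_2\in J$ with $I_j\subseteq\dda b_j$, and an upper bound $c\in J$ with $b_1,b_2\llcurly c$; then $\dda c\ll J$ and dominates both $I_j$, while $I\ll J\Rightarrow I\subseteq J$ bounds the supremum above by $J$. Thus $\cRI(P)$ is a domain. Finally, for the basis claim I would verify that $\{\dda a\mid a\in P\}$ is $\ll$-dense: given $I\ll J$, pick $b\in J$ with $I\subseteq\dda b$, then use $\llcurly$-directedness of $J$ to find $a\in J$ with $b\llcurly a$; the characterization gives $\dda a\ll J$, while $b\in\dda a$ together with $I\subseteq\dda b$ gives $I\ll\dda a$, so $I\ll\dda a\ll J$ as required.
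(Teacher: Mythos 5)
Your proposal is correct and follows essentially the same route as the paper: directed unions of round ideals are round ideals, the representation $J=\bigcup_{b\in J}\dda b$ yields the characterization of $\ll$ in both directions, and the basis claim is read off from that characterization. You merely spell out details the paper leaves implicit (the dcpo verification and the directedness of the full set $\{I\mid I\ll J\}$), which is fine.
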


\begin{proof}
Since the union of a family of round ideals that is directed under
inclusion is a round ideal, the collection $\cRI(P)$ of all round
ideals is directed complete. 

Given two round ideals $I$ and $J$,
suppose that $I\ll J$. Since $J$ is the union of the round ideals
$\dda c$ with $c\in J$, we obtain $I\subseteq \dda c$ for some $c\in
J$. Suppose conversely that this latter condition is satisfied and
suppose that $J$ is contained in the union of a directed family of
round ideals $J_j$. Then $c\in J_i$ for some $i$ and consequently $\dda
c\subseteq J_i$. Hence $I\ll J$.   

By the characterization of the way-below relation, the
round ideals of the form $\dda c$, $c\in P$, are $\ll$-dense in
$\cRI(P)$ and, hence, form a basis.
\end{proof}

Since the round ideals $\dda c$, $c\in P$, form a basis for the
round ideal completion, they form a predomain, when equipped with the
restriction of the relation $\ll$ on $\cRI(P)$. One may conjecture that
$a\llcurly b$ if, and only if, $\dda a\ll \dda b$.
It is indeed true that $a\llcurly b$ in $P$ implies $\dda a\ll \dda b$
in $\cRI(P)$. But the converse is not true in general as the following
example shows (thus, not every predomain is the basis of a domain):

\begin{example}
Let $D$ be the union of $[0,1]^2$ and the segment $\{r(1,1) \mid 1\leq
r\leq 2\}$ in $\mathbb R^2$. On $D$ we take the coordinatewise
order. Then $D$ is a continuous lattice with the 
way-below relation: $(a,b)\ll(a',b')$ iff $1<a'=b', (a,b)\in[0,1]^2$
or $a<a'\leq 1, b<b'\leq 1$ or $a<a'\leq 1, b=b'=0$ or $a=a'=0,
b<b'\leq 1$ or $a=a'=b=b'=0$. 

We can weaken this way-below relation to a relation $\llcurly$ by
strengthening the first set of inequalities  to $1<a'=b', a<1$ or
$b<1$. Thus, for example $(1,a)\ll (2,2)$ but not $(1,a)\llcurly
(2,2)$. The round ideal completion of $(D,\llcurly)$ is the continuous
lattice $D$. 
\end{example} 

A predomain is called \emph{stratified} if  
\[\dda a\ll \dda b \mbox{ in } \cRI(P)
\implies a\llcurly b \mbox{ in } P\] 
By the characterization of the relation $\ll$ in Proposition
\ref{prop:ricompletion}, this is equivalent to 
\[\dda a\subseteq \dda c, c\llcurly b \implies a\llcurly b\]
Every predomain can be stratified by strengthening the relation
$\llcurly$ to: $a \llcurly_s b$ iff $\dda a\ll \dda b$ in the round
ideal completion iff there is a $c\llcurly b$ such that $\dda a
\subseteq \dda c$. 

Replacing $\llcurly$ by $\llcurly_s$ on a predomain $P$ does not change
the round ideal completion. Indeed, a domain is the round ideal
completion of any of its bases, and the predomain $(P,\llcurly_s)$ may
be identified with the basis of all $\dda a$, $a\in P$, of the
round ideal completion $\cRI(P)$. 

The containment order on round ideals induces a \emph{natural
  preorder} on the predomain $P$: Define $a\leq a'$ if $\dda a\subseteq
\dda a'$, that is, if $c\ll a$ implies $c\ll a'$. 

The property  
\[a\llcurly c \leq b \implies a\llcurly b \eqno{\rm (2)}\]
then holds for any predomain. The corresponding property
\[a\leq c \llcurly b \implies a\llcurly b \eqno{\rm (3)}\]
does not hold for predomains, in general; it holds if and only if the
predomain is stratified.

\begin{example}\label{ex:lsc}
Let us return to our basic example $C_0(X)_+$ of nonnegative real-valued
functions vanishing at infinity defined on a locally compact
Hausdorff space $X$ viewed as predomain as in \ref{ex:basic}. 
This predomain is stratified. Its round ideal completion can be
identified with the domain $\LSC(X)$ of all lower semicontinuous functions
$g$ from $X$ to the one point compactification $\oRp
=\Rp\cup\{+\infty\}$ of the nonnegative reals. A round ideal $J$ of
$C_0(X)_+$ is identified with the function $g$ defined by $g(x) =
\sup_{f\in J}f(x)$. The way-below relation
$\ll$ on $\LSC(X)$ is given by $g\ll h$ if there is an $f\in C_0(X)_+$
such that $g\leq f\leq (h-\varepsilon)_+$ for some $\varepsilon > 0$. 
\end{example} 


\subsection{c-Spaces}

Let us turn now to the topological variant of predomains. The
topologies occurring in this context are highly non-Hausdorff. This is
not a default but an essential feature. Indeed these topologies
combine order and topology.

In an arbitrary
topological space $(X,\tau)$ we use the \emph{specialization 
preorder}: $a\leq_\tau b$ if $a$ belongs to the closure of the singleton
$\{b\}$, which is equivalent to saying that every open neighborhood of
$a$ is also a neighborhood of 
$b$. For any element $x$ we denote by \[\ua x = \{y\in X\mid
x\leq_\tau y\}\] the 
\emph{saturation} of $x$, 
equivalently, the intersection of all open sets containing
$x$. Continuous functions between topological spaces preserve the
respective specialization preorders.

A \emph{c-space} is a topological space $X$ with the
property that every element $b$ has a neighborhood basis of sets of
the form $\ua x$. 
c-Spaces have the remarkable property that separate continuity is
equivalent to joint continuity, a property that has been noticed by
Ershov: 

\begin{proposition}\label{prop:joint}
Let $X$ be a c-space and $Y$ and $Z$ arbitrary topological spaces. Then
every map $f\colon X\times Y\to Z$ that is continuous separately in
each of the two arguments is jointly continuous.
\end{proposition}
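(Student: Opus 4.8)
The plan is to prove joint continuity at an arbitrary point $(a,b) \in X \times Y$. Let $W$ be an open neighborhood of $f(a,b)$ in $Z$; I must produce an open neighborhood of $(a,b)$ mapped into $W$. First I would use separate continuity in the $Y$-variable: the map $y \mapsto f(a,y)$ is continuous, so there is an open neighborhood $V$ of $b$ in $Y$ with $f(a,y) \in W$ for all $y \in V$. The difficulty is that this only controls $f$ at the fixed first coordinate $a$; I need an open set of first coordinates that behaves uniformly, and this is where the c-space hypothesis must enter.

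The key idea is to exploit the defining property of a c-space together with separate continuity in the $X$-variable. Consider the set $U = \{x \in X \mid f(x,y) \in W \text{ for all } y \in V\}$. I claim $a \in U$ by the choice of $V$, and the goal is to show $U$ contains an open neighborhood of $a$. Fix any $y \in V$. By separate continuity in the first argument, $x \mapsto f(x,y)$ is continuous, so $\{x \mid f(x,y) \in W\}$ is open and contains $a$; hence by the c-space property there is a point $x_y$ with $a \in \ua x_y \subseteq \{x \mid f(x,y) \in W\}$. The subtlety is that $x_y$ depends on $y$, so intersecting over all $y \in V$ need not yield an open set. The trick is to run the argument in the other order, choosing the saturated neighborhood \emph{before} varying $y$.

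Concretely, I would first use that the slice $x \mapsto f(x,b)$ is continuous, giving an open set containing $a$ and, by the c-space property, a point $x_0$ with $a \in \ua{x_0} \subseteq \{x \mid f(x,b) \in W\}$. Now I exploit that on the saturation $\ua{x_0}$ the specialization order forces $x_0 \leq_\tau x$ for every such $x$. The decisive step uses separate continuity in $Y$ applied at the \emph{witness} point $x_0$: since $y \mapsto f(x_0,y)$ is continuous and $f(x_0,b) \in W$, there is an open neighborhood $V$ of $b$ with $f(x_0,y) \in W$ for all $y \in V$. Because $f$ is continuous in the first argument and $W$ is open, the set $\{x \mid f(x,y)\in W\}$ is an open, hence saturated-upward, set; containing $x_0$ it therefore contains all of $\ua{x_0}$. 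Thus for every $x \in \ua{x_0}$ and every $y \in V$ we get $f(x,y) \in W$, so $\interior(\ua{x_0}) \times V$ is the desired neighborhood of $(a,b)$ mapped into $W$.

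The main obstacle, which the above ordering is designed to overcome, is the uniformity problem: separate continuity alone gives neighborhoods depending on the frozen coordinate, and one cannot in general combine them. The c-space property resolves this precisely because open sets are upward closed in the specialization order and a single witness $x_0$ below $a$ can carry the $W$-membership of $f(x_0,y)$ up to the entire neighborhood $\ua{x_0}$ of $a$ simultaneously for all $y \in V$. I would take care to verify that the c-space neighborhood basis indeed gives sets $\ua{x_0}$ that are neighborhoods of $a$ (so that their interiors are genuine open neighborhoods), and to confirm that openness of $\{x \mid f(x,y) \in W\}$ together with $x_0$ membership yields the full saturation, which is immediate since every open set is a union of saturations of its points.
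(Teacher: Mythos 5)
Your proof is correct and takes essentially the same route as the paper's: you first use continuity of $x\mapsto f(x,b)$ together with the c-space property to obtain a witness $x_0$ with $a\in\ua x_0\subseteq\{x\mid f(x,b)\in W\}$, then continuity of $y\mapsto f(x_0,y)$ to produce $V$, and finally transfer $W$-membership from $x_0$ to all of $\ua x_0$ uniformly over $y\in V$. Your formulation of that last step via openness (hence upward closedness) of $\{x\mid f(x,y)\in W\}$ is just a rephrasing of the paper's observation that the slices $x\mapsto f(x,y)$ preserve the specialization preorder.
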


\begin{proof}
Let $x_0$ and $y_0$ be elements of $X$ and $Y$, respectively, and $U$ a
neighborhood of $f(x_0,y_0)$. If $x\mapsto f(x,y_0)$ is continuous,
there is a neighborhood $W$ of $x_0$ such that $f(x,y_0)\in U$ for
every $x\in W$. Since $X$ is a c-space, we may suppose that $W = \ua
x_1$ for some $x_1\in X$. Using that $y\mapsto f(x_1,y)$ is continuous,
we find a neighborhood $V$ of $y_0$ such that $f(x_1,y)\in U$ for all
$y\in V$. Since $x\mapsto f(x,y)$ is continuous for every $y$, these
maps preserve the specialization order. Hence, $f(x,y)\in U$ for all
$(x,y)\in\ua x_1\times V$ and the latter set is a neighborhood of $(x_0,y_0)$. 
\end{proof}

\subsection{Predomains and c-spaces}

Every predomain $(P,\llcurly)$ carries a natural topology
$\tau_{\llcurly}$ that turns it into a c-space:   
A subset $U of P$ is declared to be open, if (1) $x\in U$ and $x\llcurly y$
imply $y\in U$ and (2) for every $x\in U$ there is an element $z\in U$
with $z\llcurly x$, 
equivalently, if  $U=\dua U$ where \[\dua
U =\{x\in P\mid \exists z\in U.\ z\llcurly x\}\] 

\begin{proposition}
On a predomain $(P,\llcurly)$ the open sets just defined form a c-space topology
denoted by $\tau_{\llcurly}$ for which the sets $\dua x =\{y\in
P\mid x\llcurly y\}, x\in P$ form a basis. The specialization preorder
agrees with the natural preorder of $(P,\llcurly)$.
\end{proposition}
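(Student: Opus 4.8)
The plan is to verify three things in sequence: that the declared open sets form a topology, that this topology is a c-space topology with the sets $\dua x$ as a basis, and finally that the specialization preorder coincides with the natural preorder. I would open by recording the two equivalent descriptions of an open set—conditions (1) and (2) together versus the single equation $U = \dua U$—and observe that the equivalence is immediate from unwinding the definition of $\dua U$. Indeed, $U \subseteq \dua U$ is exactly condition (2) (every $x\in U$ has some $z\in U$ with $z\llcurly x$), and $\dua U \subseteq U$ is condition (1) read through transitivity together with property (2) of the relation; I would spell this out carefully since it is the bookkeeping that makes the rest routine.

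Next I would check the topology axioms. The empty set and $P$ satisfy $U=\dua U$ trivially. For arbitrary unions, $\dua$ clearly commutes with unions, so a union of sets satisfying $U=\dua U$ again satisfies it. The only axiom requiring the interpolation property is stability under finite intersection: given $U,V$ open and $x\in U\cap V$, property (2) gives $z_U\llcurly x$ with $z_U\in U$ and $z_V\llcurly x$ with $z_V\in V$; applying (IP2) to the two-element set $\{z_U,z_V\}\llcurly x$ yields a $b$ with $z_U\llcurly b\llcurly x$ and $z_V\llcurly b\llcurly x$. Since $z_U\llcurly b$ and $U$ is upward closed under $\llcurly$ we get $b\in U$, similarly $b\in V$, and $b\llcurly x$ exhibits the required witness in $U\cap V$. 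This is the step I expect to be the main obstacle, in the sense that it is the only place where the defining axioms of a predomain—specifically the finite-set interpolation property—are genuinely needed; everything else is formal.

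I would then argue that each $\dua x$ is open and that these sets form a basis. Openness of $\dua x$: if $y\in\dua x$, i.e.\ $x\llcurly y$, then (IP1) supplies $z$ with $x\llcurly z\llcurly y$, and then $z\in\dua x$ witnesses property (2), while upward closure under $\llcurly$ follows from transitivity. To see they form a basis, take any open $U$ and $y\in U$; property (2) gives $z\in U$ with $z\llcurly y$, so $y\in\dua z\subseteq U$, the inclusion holding because $U$ is $\llcurly$-upward closed. Having a basis of sets of the form $\dua x$, and noting that $\dua x$ is precisely the saturation $\ua x$ in this topology (which will drop out of the final paragraph), shows directly that $\tau_{\llcurly}$ is a c-space topology.

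Finally I would identify the specialization preorder with the natural preorder. Recall the natural preorder is defined by $a\leq a'$ iff $\dda a\subseteq \dda a'$, equivalently $c\llcurly a \implies c\llcurly a'$. For the specialization preorder I use the characterization that $a\leq_\tau a'$ iff every open neighborhood of $a$ contains $a'$, equivalently every basic neighborhood $\dua x$ of $a$ contains $a'$. Now $a\in\dua x$ means $x\llcurly a$, and $a'\in\dua x$ means $x\llcurly a'$; so $a\leq_\tau a'$ holds iff $x\llcurly a \implies x\llcurly a'$ for all $x$, which is exactly $\dda a\subseteq\dda a'$. The two preorders therefore coincide, completing the proof.
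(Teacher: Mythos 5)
Most of your proof is sound and runs along the same lines as the paper's: unions are formal, finite intersections are exactly where (IP2) enters, openness of each $\dua x$ uses (IP1) and transitivity, the basis claim follows from condition (2) plus upward closure, and your identification of the specialization preorder with the natural preorder is precisely the paper's computation ($a\leq_\tau a'$ iff every basic neighborhood $\dua x$ of $a$ contains $a'$ iff $\dda a\subseteq\dda a'$). One small slip along the way: $P=\dua P$ is not trivial --- the inclusion $P\subseteq\dua P$ is exactly (IP0), so the nonempty-ness axiom of the predomain is genuinely used there, not just in the intersection step as you suggest.

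There is, however, a genuine error in your route to the c-space property: the claim that $\dua x$ \emph{is} the saturation $\ua x$ is false in general. The saturation $\ua x$ always contains $x$, whereas $x\in\dua x$ holds iff $x\llcurly x$, which typically fails --- in the basic example $C_0(X)_+$ one has $f\llcurly f$ only for $f=0$, so $\dua f$ is a proper subset of $\ua f$ not containing $f$. Only the inclusion $\dua x\subseteq\ua x$ holds (if $x\llcurly y$ then every open set containing $x$ contains $y$, by condition (1)). Since the c-space condition asks for neighborhood bases consisting of saturations $\ua x$, and these are in general non-open sets strictly larger than $\dua x$, your final sentence of the third paragraph does not follow ``directly'' as stated. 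The repair is the sandwich the paper uses: given an open $U$ containing $x$, condition (2) produces $z\in U$ with $z\llcurly x$, and then $x\in\dua z\subseteq\ua z\subseteq U$, where the last inclusion holds because $U$ is an open set containing $z$ and hence contains everything above $z$ in the specialization preorder. Thus $\ua z$ is a neighborhood of $x$ (it contains the open set $\dua z\ni x$) lying inside $U$, which is exactly the c-space property. All the ingredients are already in your write-up; only the false identity $\dua x=\ua x$ needs to be replaced by this two-step containment.
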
 

\begin{proof}
Clearly the union of any family of open sets in the sense just defined
is open. The intersection of finitely many open sets is open by the
interpolation property (IP). The sets of the form $\dua x$ are open by
transitivity and the interpolation property (IP1) and they form a basis for
the topology $\tau_{\llcurly}$ by the interpolation property
(IP2). The specialization preorder for this topology agrees with the
natural preorder since $x\leq_s y$ iff every open neighborhood of $x$
contains $y$ iff $z\llcurly x$ implies $z\llcurly y$ iff $\dda
x\subseteq \dda y$ iff $x\leq y$. Moreover every open neighborhood $U$
of $x$ contains an element $z\ll x$ so that $\ua z$ is a subset of $U$
containing the open basic neighborhood $\dua z$ of $x$. Thus
$(P,\tau_{\llcurly})$ is a c-space.
\end{proof}

Conversely, on a c-space $(P,\tau)$ we consider 
 the \emph{topological way-below relation}
$a\llcurly_\tau b$ if $\ua a$ is a neighborhood of $b$. 

\begin{proposition}
A c-space $(P,\tau)$ becomes a stratified predomain for the
topological way-below relation $\llcurly_\tau$. The natural preorder
associated with the relation $\llcurly_\tau$ agrees with the
specialization preorder $\leq_\tau$.
\end{proposition}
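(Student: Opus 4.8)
The plan is to check the two predomain axioms for $\llcurly_\tau$, then derive the stratification and the identity of the two preorders, all of them resting on one elementary observation together with a single use of the c-space axiom. The observation is that $a\llcurly_\tau b$ means precisely $b\in\interior(\ua a)$, so that $\ua a$ contains an open set around $b$; in particular $b\in\ua a$, i.e. $a\leq_\tau b$, and moreover $\ua$ is antitone for the specialization order in the sense that $a\leq_\tau c$ forces $\ua c\subseteq\ua a$.

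Transitivity is then formal. Given $a\llcurly_\tau b\llcurly_\tau c$, the second relation yields $b\leq_\tau c$, while the first puts $b$ into the open set $\interior(\ua a)$. Since every open set containing $b$ contains $c$ (that is what $b\leq_\tau c$ says), we get $c\in\interior(\ua a)$, i.e. $a\llcurly_\tau c$.

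Interpolation is the real work and the only place the c-space property is needed. Given a finite $F$ with $F\llcurly_\tau c$, each $\ua a$ with $a\in F$ is a neighborhood of $c$, so the finite intersection $N=\bigcap_{a\in F}\ua a$ is a neighborhood of $c$ and $\interior N$ is an open set contained in every $\ua a$. Applying the defining property of a c-space at the point $c$ to the open neighborhood $\interior N$ produces an element $b$ whose saturation $\ua b$ is a neighborhood of $c$ with $\ua b\subseteq\interior N$. The delicate point is that we need $b$ itself to lie inside the open set $\interior N$; this is secured because $b\in\ua b\subseteq\interior N$. Then $\interior N$ is an open subset of each $\ua a$ containing $b$, so $a\llcurly_\tau b$ for every $a\in F$, while $b\llcurly_\tau c$ by construction; hence $F\llcurly_\tau b\llcurly_\tau c$. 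The empty case $F=\emptyset$, where $\interior N=P$, simultaneously delivers (IP0).

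It remains to identify the preorders and to verify stratification. If $a\leq_\tau a'$ and $c\llcurly_\tau a$, then $a$ lies in the open set $\interior(\ua c)$, and $a\leq_\tau a'$ drags $a'$ into it as well, so $c\llcurly_\tau a'$; thus $\dda a\subseteq\dda a'$. Conversely, if $\dda a\subseteq\dda a'$ but $a\not\leq_\tau a'$, choose an open $U\ni a$ with $a'\notin U$, use the c-space property to find $x$ with $\ua x$ a neighborhood of $a$ and $\ua x\subseteq U$, so that $x\llcurly_\tau a$ and hence $x\llcurly_\tau a'$, forcing $a'\in\ua x\subseteq U$, a contradiction. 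So the natural preorder coincides with $\leq_\tau$. Finally, stratification in its equivalent form reads: $\dda a\subseteq\dda c$ and $c\llcurly_\tau b$ imply $a\llcurly_\tau b$. But $\dda a\subseteq\dda c$ now means $a\leq_\tau c$, whence $\ua c\subseteq\ua a$ by antitonicity, and since $\ua c$ is a neighborhood of $b$ so is the larger $\ua a$; that is exactly $a\llcurly_\tau b$. The main obstacle is the interpolation step, where the neighborhood-basis-of-saturations axiom must be invoked so that the chosen interpolant satisfies both half-relations at once.
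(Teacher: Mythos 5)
Your proof is correct, and it takes the natural route the paper intends: the paper in fact omits the verification of transitivity, interpolation, and the preorder identification, justifying only the stratification step via property (3), namely that $a'\leq a\llcurly_\tau b$ implies $a'\llcurly_\tau b$ because $\ua a\subseteq \ua a'$ --- which is exactly your antitonicity argument. Your interpolation step, with the careful observation that the interpolant $b$ lies in $b\in\ua b\subseteq\interior N$, correctly supplies the one genuinely nontrivial detail the paper leaves to the reader.
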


The topological way-below relation has the property that $a'\leq
a\llcurly_\tau b\implies a'\llcurly_\tau b$, that is, it satisfies
property (3). Thus, $(P,\llcurly_\tau)$ is a stratified predomain.
 
The two constructions almost yield a one-to-one correspondence between
predomains and c-spaces. Starting with a predomain $(P,\llcurly)$,
then passing to the c-space 
topology $\tau_{\llcurly}$ and then extracting the topological
way-below relation yields the stratification of the original relation
$\llcurly$. Starting with a c-space, extracting its topological
way-below relation and forming then the associated c-space topology
gives back the original c-space topology.   

On a domain, the c-space topology agrees with the Scott
topology. Indeed, on a domain the sets of the form $\dua a$ form a
basis of the Scott topology \cite[Theorem II-1.14]{compend}. 

%

\subsection{Countability conditions}

A predomain $P$ is said to be \emph{first countable} if, for every
element $b$, the round ideal $\dda b$ has a countable $\llcurly$-cofinal
subset. This is equivalent to the requirement that 
there is a sequence $a_1\llcurly a_2\llcurly\dots$ which is
$\llcurly$-cofinal in $\dda b$. We say that
$P$ is \emph{second countable} or \emph{countably based} if it has a
countable $\llcurly$-dense subset $B$. 

Our basic example $(C_0(X)_+,\llcurly)$ \ref{ex:basic} is first
countable choosing $f_n=(f-\frac{1}{n})_+$, $n\in\N$, but not second
countable in general.   

The first and second countability conditions for predomains correspond
to first and second countability of the corresponding c-space topology
$\tau_{\llcurly}$, respectively. 

The round ideal completion of a countably based predomain is countably
based. But the round ideal completion of a first countable predomain
need not be first countable: 

\begin{example}\label{ex:basic3}
Our basic example $(C_0(X)_+,\llcurly)$ \ref{ex:basic} is first
countable. Choosing $f_n=(f-\frac{1}{n})_+$, $n\in\N$, we obtain a
countable cofinal subset in the round ideal $\dda f$. The round ideal
completion need not be first countable. As an example, let $X$ be an
uncountable discrete space. The round ideal completion of $C_0(X)_+$
is the domain of 
all maps $g$ from $X$ to $\oRp$. The maps $f\ll g$ are those maps $f$ with
finite support that satisfy $f(x)<g(x)$ for all $x$ in the support of
$f$. Because of the uncountablity of $X$, there cannot be a cofinal
countable subset among the functions $f$ way-below, for example, the
constant function 1.  
\end{example}

A round ideal $I$ will be said to be countably generated or simply a round
$\omega$-ideal if it contains a
sequence \[a_1\llcurly a_2\ll \dots\] such that, for every $b\in I$, there is
an $n$ such that $b\ll a_n$. If $P$ is first countable, then $i(a) =
\dda a$ is a round $\omega$-ideal and we may form the $\omega$-completion,
the collection $\omega\cRI(P)$  of all round $\omega$-ideals
which is $\omega$-complete in the sense that the union of every
increasing sequence of round $\omega$-deals is a round $\omega$-ideal. 
The round $\omega$-ideal completion is an \emph{$\omega$-domain}. By this we
mean that every element $a$ is the supremum of a chain $a_1\ll_\omega
a_2\ll_\omega\dots$, where $b\ll_\omega a$ if, for every sequence
$b_1\leq b_2\leq \dots$ with $a\leq \sup_n b_n$, there is an $n$ such
that $b\leq b_n$.  In a first countable 
predomain there may exist round ideals that are not countably
generated as we have seen in \ref{ex:basic3}. Another example is given by
the ordered set $\Omega$ of 
countable ordinals. Here, the set $\Omega$ itself is a round ideal
that is not countably generated. 

In the literature related to the Cuntz semigroup, first countability
is always required following Coward, Elliott and Santiago \cite{CEI}.

\subsection{Morphisms}\label{sec:mor}
For predomains it is natural to consider maps
$f\colon P\to Q$ that \emph{preserve the relation $\llcurly$}, that is
$a\llcurly b$ implies $f(a)\llcurly f(b)$. For the
associated c-space topology that is equivalent to saying that $\ua f(U)$ is
open for every open subset $U$. Maps between topological
spaces will be called \emph{open}, if they satisfy this property. (In
topology, a map 
is called open if the image of every open set 
is open. We have modified this definition, but in such a way that for
T$_1$ spaces the new definition agrees with the old one. For
T$_0$-spaces this new definition looks more appropriate.)  

As for topological spaces in general, for c-spaces it is natural to
consider continuous maps. Continuous maps preserve the respective
specialization preorders, but not the topological way-below relations.
Accordingly, a map $f\colon P\to Q$ between predomains will be called
\emph{continuous}, if it is continuous for the respective c-space topologies.
This is equivalent to the condition:
\[\forall b\in P.\ \forall c\in Q.\ c\llcurly f(b) \implies \exists
a\in P.\ a\llcurly b \mbox{ and } c\llcurly f(a)\] 

The canonical map $i\colon a\mapsto \dda a$ from a predomain $P$ into its round
ideal completion preserves the relation $\llcurly$. It also is
continuous. Indeed, if $I$ is a round ideal with 
$I\ll \dda a$, then there is an element $b\in \dda a$ such that
$I\subseteq \dda b$. As a consequence, $i\colon a\mapsto \dda a$ is a
topological embedding.

On posets and dcpos we use Scott continuity. A map between posets is
said to be \emph{Scott continuous} if it is monotone and preserves
existing suprema of directed sets\footnote{It is interesting to remark
that Hofmann and Stralka in their 1976 paper \cite[Definition
1.29]{HS} had proposed to 
call \emph{normal} those maps that preserve existing directed suprema
in analogy to the terminology used for W$^*$-algebras}. This order
theoretic notion of   
continuity is equivalent to continuity with respect to a topology, the
Scott topology. The closed sets of the \emph{Scott topology} of a
poset are those lower sets that are closed for suprema of directed
subsets, as far as they exist. 

On domains one can consider the Scott topology and the associated
c-space topology. Fortunately the two topologies agree so that there
is no ambiguity when talking about continuity of function from or into domains.

\subsection{Universality of the round ideal completion}

The canonical map $i\colon a\mapsto \dda a$ from a predomain
$(P,\llcurly)$ into its round 
ideal completion is continuous and preserves the relation
$\llcurly$. Both properties are consequences of the characterization of the
way-below relation on the round ideal completion: Indeed, if $I$ is a
round ideal with $I\ll \dda a$, then there is an element $c\in \dda a$ such that
$I\subseteq \dda c$ which shows continuity. For the preservation of
$\llcurly$, let  
$a\llcurly b$; interpolate an element $a\llcurly c\llcurly b$ and we 
have $\dda a\subseteq \dda c$ and $c\in \dda b$, that is $\dda
a\ll \dda b$.

The round ideal completion of a predomain
has the desired universal property: 

\begin{proposition}\label{prop:universal}
For every continuous map $f$
from a predomain $(P,\llcurly)$ into a dcpo $Q$ (with the Scott
topology), there is a unique continuous map 
$\widehat f\colon \cRI(P)\to Q$ such that $\widehat f(\dda a) = f(a)$: 
\begin{diagram}
P&\rTo^\dda      & \cRI(P)\\
   &\rdTo_f&\dTo>{\widehat f}\\
   &            & Q
\end{diagram}
If $Q$ is a domain, the continuous extension $\widehat f$ preserves $\ll$ if
and only if $f$ preserves $\ll$.
\end{proposition}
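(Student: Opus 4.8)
The plan is to define the extension on a round ideal $J$ by $\widehat f(J)=\sup_{a\in J}f(a)$, and then to verify in turn that this supremum exists, that $\widehat f$ factors $f$ through the canonical embedding, that it is Scott continuous and unique, and finally to settle the $\ll$-equivalence. First I would justify existence of the supremum. Being continuous, $f$ preserves the specialization preorders, which on $(P,\tau_{\llcurly})$ is the natural preorder and on $Q$ is the dcpo order $\leq$; hence $f$ is monotone. Moreover $a\llcurly c$ forces $\dda a\subseteq\dda c$ by transitivity, i.e.\ $a\leq c$ in the natural preorder, so for a $\llcurly$-directed $J$ the family $\{f(a)\mid a\in J\}$ is directed in $Q$ and its supremum exists because $Q$ is a dcpo. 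Monotonicity of $J\mapsto\widehat f(J)$ in the inclusion order is then immediate.

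The crucial step, which I expect to be the main obstacle, is the identity $\widehat f(\dda a)=f(a)$, since this is the one place where continuity of $f$ is used in an essential, non-order-theoretic way. The inequality $\sup_{c\llcurly a}f(c)\leq f(a)$ is just monotonicity. For the reverse I would argue by contradiction: set $s=\sup_{c\llcurly a}f(c)$ and suppose $f(a)\not\leq s$. The set $Q\setminus\da s$ is Scott open, because its complement $\da s$ is a lower set closed under directed suprema; hence $f^{-1}(Q\setminus\da s)$ is $\tau_{\llcurly}$-open and contains $a$. By the roundedness built into c-space open sets there is then a $z\llcurly a$ with $f(z)\not\leq s$, contradicting $f(z)\leq s$. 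This yields $\widehat f(\dda a)=f(a)$, so the triangle commutes.

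Next I would establish Scott continuity and uniqueness. By Proposition~\ref{prop:ricompletion} directed suprema in $\cRI(P)$ are unions, so for a directed family $(J_k)$ with union $J$ one computes $\widehat f(J)=\sup_{a\in J}f(a)=\sup_k\sup_{a\in J_k}f(a)=\sup_k\widehat f(J_k)$, where the middle equality is the standard fact that a supremum over a union equals the supremum of the partial suprema; together with monotonicity this shows $\widehat f$ preserves directed suprema, i.e.\ is Scott continuous. For uniqueness, every round ideal satisfies $J=\bigcup_{a\in J}\dda a$ as a directed union (using $\llcurly$-directedness of $J$ for coverage and directedness, and transitivity for $\dda a\subseteq J$), so any Scott continuous $g$ with $g(\dda a)=f(a)$ must satisfy $g(J)=\sup_{a\in J}g(\dda a)=\widehat f(J)$, forcing $g=\widehat f$.

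Finally, for the $\ll$-equivalence when $Q$ is a domain, I would combine the already-noted implication $a\llcurly b\Rightarrow\dda a\ll\dda b$ with the characterization of $\ll$ on $\cRI(P)$. If $f$ preserves $\ll$ and $I\ll J$, choose $b\in J$ with $I\subseteq\dda b$ and, by roundedness of $J$, some $b'\in J$ with $b\llcurly b'$; then $\widehat f(I)\leq\widehat f(\dda b)=f(b)\ll f(b')\leq\widehat f(J)$, and properties $(3)$ and $(2)$ of $\ll$ give $\widehat f(I)\ll\widehat f(J)$. Conversely, if $\widehat f$ preserves $\ll$ and $a\llcurly b$, then $\dda a\ll\dda b$, so $f(a)=\widehat f(\dda a)\ll\widehat f(\dda b)=f(b)$, and $f$ preserves $\ll$.
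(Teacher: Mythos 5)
Your proof is correct and follows essentially the same route as the paper: define $\widehat f(J)=\sup_{a\in J}f(a)$, derive directedness of $f(J)$ from preservation of the specialization preorders, obtain Scott continuity and uniqueness from the directed decomposition $J=\bigcup_{a\in J}\dda a$, and settle the $\ll$-equivalence by the same interpolation computation $\widehat f(I)\leq f(b)\ll f(b')\leq\widehat f(J)$. You actually go one step beyond the paper, whose proof never explicitly checks the commutation $\widehat f(\dda a)=f(a)$: your contradiction argument using the Scott-closed set $\da s$ together with the roundedness property of c-space open sets supplies exactly that missing verification, and correctly identifies it as the one place where continuity of $f$, rather than mere monotonicity, is essential.
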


\begin{proof}
For uniqueness suppose that $\widehat f\colon \cRI(P)\to Q$ is a
continuous map satisfying $\widehat f(\dda a) = f(a)$. Any round
ideal $J$ is the union of the directed family of $\dda a$, $a\in
J$. Thus $\widehat f(J) = \widehat f(\bigcup_{a\in P}\dda a)
=\sup_{a\in P}\widehat f(\dda a)\mbox{ (by continuity) } = \sup_{a\in
  P}f(a)$.

We now define $\widehat f$ by $\widehat f(J) =\sup_{a\in J}f(a)$ for
every round ideal $J$. We first remark that $\widehat f$ is well
defined, since for a round ideal $J$ of $P$ the image $f(J)$ is
directed. Indeed, a 
continuous map preserves the specialization order; the specialization
order on a predomain is the order $a\leq b$ iff $\dda\subseteq \dda b$
and the specialization order for the Scott topology on a dcpo is the
given order. 
 
We now check that $\widehat f$ is Scott-continuous. If $J_i$ is a
directed family of round ideals the $J=\bigcup_i J_i$ is its supremum
in the domain of round ideals and $\widehat f(J) = \sup_{a\in J} f(a)
= \sup_i \sup_{a\in J_i}f(a) =  \sup_i\widehat f(J_i)$. 

Now suppose that $Q$ is a continuous dcpo. If $\widehat f$ preserves
$\ll$, then $f=\widehat f\circ\dda$ preserves $\ll$. Conversely,
suppose that $f$ preserves $\ll$. Let $I\ll J$ in $\cRI(A)$. There is
an $a\in J$ such that $b\ll a$ for all $b\in I$. Let $a\ll a'\in
J$. Then $\widehat f(I) =\sup_{b\in I} f(b) \leq f(a) \ll f(a') \leq
\dsup_{b\in J}f(b) = \widehat f(J)$. Thus, $\widehat f$ preserves $\ll$.
\end{proof}  

For a continuous map $f$ from a predomain $P$ to a predomain $Q$, the
composition $\dda\circ f\colon P\to \cRI(Q)$ is continuous, too. By
the preceding proposition, there is a unique continuous map $\cRI(f)\colon
\cRI(P)\to \cRI(Q)$ such that $\cRI(f)\circ\dda = \dda\circ f$:   
\begin{diagram}
P&\rTo^\dda &\cRI(P)\\
\dTo< f&&\dTo>{\cRI(f)}\\
Q&\rTo_\dda&\cRI(Q)
\end{diagram}
and this map is defined by  $\cRI(f)(J) = \bigcup_{a\in J}\dda f(a)=
\dda f(J)$. Moreover, $\cRI(f)$ preserves $\ll$ if and only if $f$
does. In this way, $\cRI$ becomes a functor from the category of
predomains and continuous maps to the category of domains and
continuous maps. It restricts to a functor if one restricts to
continuous maps preserving $\ll$.

From a topological point of view, the round ideal completion 
$\cRI(X)$ of a c-space $X$ can also 
be seen to be the D-completion in the sense of \cite[Proposition
9.1]{KL} and equivalently as the sobrification \cite[Proposition
10.2]{KL} . Thus it has a more general universal property than the one
shown above: For every 
continuous map $f$ from $X$ into a monotone convergence space $Y$ (in
particular, into every sober space),
there is a unique continuous map $\widehat f\colon \cRI(X)\to Y$ such
that $\widehat f(\dda x) = f(x)$ for all $x\in X$ \cite[Theorem 6.7]{KL}.
But we will not use this more general point of view in this paper.



Not all continuous maps from $\cRI(P)$ to $\cRI(Q)$ are induced by
continuous maps from $P$ to $Q$, but only those that map the
basis $P$ to the basis $Q$. As the continuous maps from  $\cRI(P)$ to
$\cRI(Q)$ are in one-to-one correspondence with the continuous maps
from $P$ to $\cRI(Q)$, we may view these maps $F$ as set-valued maps from
$P$ to $Q$, where $F(x)$ is a round ideal of $Q$ for every $x\in
P$. Alternatively we may view these maps as a relation $R\subseteq P\times Q$
where $(x,y)\in R$ if $y\in F(x)$. It is not difficult to axiomatize
such relations; one has to write down, firstly, that the set of all $y$ such
that $(x,y)\in R$ form a round ideal and, secondly, that $F$ is continuous.
One finds such axioms in \cite[Definition 2.2.27]{AJ}.

\section{PreCuntz semigroups and their duals}

In this paper, a \emph{monoid} $(C,+,0)$ will always be understood to be
commutative. Thus, $+$ is a commutative associative operation with
neutral element $0$. A \emph{monoid homomorphism} is a map $f$ between monoids
such that $f(0)=0$ and $f(x+y)=f(x)+f(y)$. 

A \emph{cone} is a monoid $(C,+,0)$ endowed with a
scalar multiplication by real numbers $r>0$ which  
satisfies the identities: $1\cdot a = a, (rs)a =r(sa),
(r+s)a=ra+sa, r(a+b) =ra+rb$. One may extend the scalar multiplication
to $r=0$ by defining $0\cdot a =0$, and the above laws for the scalar
multiplication remain valid. A \emph{linear} map is a map $f$ between
cones which is a monoid homomorphism and satisfies $f(rx) =rf(x)$.

\subsection{PreCuntz and Cuntz semigroups}\label{subsec:precuntz}

A \emph{predomain monoid} $(C,+,0,\llcurly)$ is a monoid
endowed with the structure 
of a predomain in such a way that addition is
continuous. 

We will say 
that the relation $\llcurly$ is \emph{additive} if \[0\llcurly a \ \
\mbox{ and }\ \ a\llcurly a', b\llcurly b'\implies a+b\llcurly a'+b'\]
A predomain monoid in which the relation $\llcurly$ is additive will
also be called a \emph{preCuntz semigroup}.

Since for c-spaces separate continuity implies joint continuity by Proposition
\ref{prop:joint}, it suffices to require addition to be separately
continuous, that is, the maps $x\mapsto a+x$ to be
continuous for every $a$. But the additivity of the relation
$\llcurly$ has to be understood jointly as defined
above. It is not sufficient to require that $a\llcurly a'$ implies
$a+b\llcurly a'+b$.

A directed complete partially ordered monoid (a dcpo-monoid, for
short) is a monoid with a directed complete partial order such that
the addition is (Scott-) continuous. If the underlying dcpo is a
domain, we say that it is a \emph{domain monoid}.  A domain monoid with
an additive way-below relation will be called a \emph{Cuntz
  semigroup}. Of course, a Cuntz semigroup is also preCuntz; we just
have to concentrate at the way-below relation. 

Let us look at the round ideal completion of a preCuntz semigroup
$C$. For two round ideals $I$ and $J$ define
\[ I+J = \bigcup_{a\in I, b\in J} \dda (a+b)\]
Then $I+J$ is a round ideal. Indeed, if $c\in I+J$, then $c\ll a+b$
for some $a\in I, b\in J$. There are $a'\in I$ and $b'\in J$ with
$a\llcurly a'$ and $b\llcurly b'$. By the additivity of the way-below
relation we obtain $c\llcurly a+b\llcurly a'+b'$, whence $c\in
\dda(a'+b')$, that is, $c\in I+J$. Using Proposition
\ref{prop:ricompletion} the following is easily verified:

\begin{proposition}
The round ideal completion $\cRI(C)$ of a preCuntz semigroup $C$ is a Cuntz
semigroup. The map $a\mapsto \dda a\colon C\to\cRI(C)$ is a continuous
monoid homomorphism preserving $\ll$. 
\end{proposition}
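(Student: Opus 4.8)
The plan is to verify the two assertions in turn, leaning heavily on Proposition~\ref{prop:ricompletion} for the domain structure and on the additivity computation already carried out just before the statement. First I would observe that the preceding paragraph establishes that $I+J$ is a round ideal, so the operation $+$ is well defined on $\cRI(C)$. I would then check the monoid axioms. Commutativity is immediate from $\dda(a+b)=\dda(b+a)$ and the symmetry of the defining union. For the neutral element, I would show that $\dda 0$ is the zero of $\cRI(C)$: by (IP0) and the additivity clause $0\llcurly a$, the ideal $\dda 0$ consists precisely of the elements way-below $0$, and $I+\dda 0=\bigcup_{a\in I,\,e\llcurly 0}\dda(a+e)=I$, using monotonicity of $+$ and cofinality of the $a$'s in $I$. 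Associativity I would derive from associativity of $+$ in $C$ together with interpolation, expanding $(I+J)+K$ and $I+(J+K)$ to the same union of ideals $\dda(a+b+c)$ with $a\in I$, $b\in J$, $c\in K$.

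Next I would verify that addition on $\cRI(C)$ is Scott-continuous, i.e.\ preserves directed suprema. Since directed suprema in $\cRI(C)$ are unions (Proposition~\ref{prop:ricompletion}), given a directed family $J_i$ I would compute $I+\bigl(\bigcup_i J_i\bigr)=\bigcup_{a\in I}\bigcup_i\bigcup_{b\in J_i}\dda(a+b)=\bigcup_i\bigl(I+J_i\bigr)$, which is the routine reindexing of a union. Together with the monoid structure this makes $\cRI(C)$ a dcpo-monoid, and since it is a domain by Proposition~\ref{prop:ricompletion}, it is a domain monoid. To upgrade this to a Cuntz semigroup I must check that its way-below relation is additive. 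Using the characterization $I\ll J$ iff $I\subseteq\dda b$ for some $b\in J$, I would argue: $\dda 0$ is way-below every ideal because $\dda 0\subseteq\dda b$ for any $b$ with $e\llcurly b$ (take $e\llcurly 0\leq b$, or interpolate), and given $I_k\ll J_k$ witnessed by $I_k\subseteq\dda b_k$, $b_k\in J_k$, I would produce a witness $b\in J_1+J_2$ with $I_1+I_2\subseteq\dda b$; the natural candidate is obtained from $b_1+b_2$ after one interpolation step, using additivity of $\llcurly$ in $C$.

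For the final clause, that $a\mapsto\dda a$ is a continuous monoid homomorphism preserving $\ll$: continuity and preservation of $\ll$ are exactly the properties established in Subsection~\ref{subsec:2.3} and in the discussion preceding Proposition~\ref{prop:universal}, so I would simply cite those. That it is a monoid homomorphism amounts to $\dda(a+b)=\dda a+\dda b$, which unwinds directly from the definition $\dda a+\dda b=\bigcup_{c\llcurly a,\,d\llcurly b}\dda(c+d)$: the inclusion $\supseteq$ uses additivity of $\llcurly$ (so $c+d\llcurly a+b$), and $\subseteq$ uses (IP) to interpolate $a$ and $b$ from below simultaneously, then additivity to land inside some $\dda(c+d)$. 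I expect the main obstacle to be precisely the verification of additivity of $\ll$ on $\cRI(C)$, since it is the one place where the somewhat delicate joint additivity hypothesis on $\llcurly$ must be combined with the way-below characterization and an interpolation step; the monoid axioms and Scott-continuity are bookkeeping, and the properties of the canonical map are already in hand.
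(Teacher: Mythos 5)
Your overall route---direct verification of the monoid axioms, Scott-continuity of $+$ via the union formula, additivity of $\ll$ via the characterization of the way-below relation from Proposition~\ref{prop:ricompletion}, and citing the already-established continuity and $\llcurly$-preservation of $a\mapsto\dda a$---is precisely the verification the paper declares ``easily verified'' after checking that $I+J$ is a round ideal (the proof block actually printed under this proposition is, by an evident copy-and-paste slip, the proof of Proposition~\ref{prop:universal1} on the universal property, so your detailed check is a legitimate filling-in of what the paper omits). Your treatment of the neutral element, associativity, Scott-continuity, and in particular the additivity of $\ll$ on $\cRI(C)$ with witness $b_1+b_2\in J_1+J_2$ is correct.

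There is, however, one concrete flaw, and it sits at the only point where a hypothesis beyond additivity of $\llcurly$ is genuinely needed. For the inclusion $\dda(a+b)\subseteq\dda a+\dda b$ you propose to ``use (IP) to interpolate $a$ and $b$ from below simultaneously.'' This cannot work: from $e\llcurly a+b$, interpolation only yields some $m$ with $e\llcurly m\llcurly a+b$, and $m$ comes with no decomposition; conversely, choosing arbitrary $c\llcurly a$ and $d\llcurly b$ gives $c+d\llcurly a+b$ by additivity but establishes no relation between $e$ and $c+d$. What this step requires is the \emph{continuity of addition} built into the definition of a predomain monoid: in c-space terms, $e\llcurly a+b$ implies there exist $c\llcurly a$ and $d\llcurly b$ with $e\llcurly c+d$, whence $e\in\dda(c+d)\subseteq\dda a+\dda b$. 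This is exactly the ingredient the paper makes explicit in the proof of Lemma~\ref{lem:env1} (``if $z\llcurly x+y$, then there are $x'\llcurly x$ and $y'\llcurly y$ such that $z\leq x'+y'$'') and records as property~(\ref{eq:(4)}) in the section on $A_+$; without it the canonical map need not be a homomorphism, since additivity of $\llcurly$ alone only yields $\dda a+\dda b\subseteq\dda(a+b)$. A smaller wobble of the same kind: your parenthetical ``take $e\llcurly 0\leq b$'' in the zero-element argument is circular as written, because $0\leq b$ in the natural preorder \emph{means} $\dda 0\subseteq\dda b$, which is what you are proving; the correct one-liner is $e\llcurly 0\llcurly b$ by transitivity, using that $0\llcurly b$ is part of the additivity hypothesis. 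Both repairs are local and immediate, but as stated the homomorphism step would fail.
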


\begin{proof}
It just remains to verify that the continuous extension $\widehat f$
according to  
\ref{prop:universal} is a monoid homomorphism: $\widehat f(I+J) = \widehat 
f(\bigcup_{a\in I,b\in J}\dda (a+b) = \sup_{a\in I,b\in J} \widehat
f(\dda(a+b)) = \sup_{a\in I,b\in J}f(a+b) =  \sup_{a\in
  I,b\in J}f(a)+f(b) = \sup_{a\in I}f(a) + \sup_{b\in J}f(b) =
\sup_{a\in I}\widehat f(\dda a) + \sup_{b\in J}\widehat f(\dda b) =
\widehat f(I) +\widehat f(J)$. 
\end{proof}

The round ideal completion of a preCuntz semigroup we has the expected
universal property:

\begin{proposition}\label{prop:universal1}
If $f\colon C\to D$ is a 
continuous monoid homomorphism from
a preCuntz semigroup $C$ into a dcpo monoid $D$, the
unique continuous extension $\widehat f\colon \cRI(C)\to D$ satisfying
$\widehat f(\dda a) =f(a)$ for all $a\in C$ according to the universal
property \ref{prop:universal} is a monoid homomorphism.
\end{proposition}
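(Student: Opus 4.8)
The plan is to read off the two homomorphism properties from the explicit description of $\widehat f$ supplied by Proposition~\ref{prop:universal}. There $\widehat f$ is given by $\widehat f(J)=\sup_{a\in J}f(a)$ and is shown to be Scott-continuous, and it satisfies $\widehat f(\dda a)=f(a)$. Preservation of the neutral element is then immediate: the canonical map $i\colon a\mapsto\dda a$ is a monoid homomorphism into $\cRI(C)$ by the preceding proposition, so its value $i(0)=\dda 0$ is the neutral element of $\cRI(C)$, and $\widehat f(\dda 0)=f(0)=0$ because $f$ is a homomorphism.

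For additivity I would start from the formula $I+J=\bigcup_{a\in I,\,b\in J}\dda(a+b)$ and first check that the family of round ideals $\dda(a+b)$, indexed by $a\in I$ and $b\in J$, is directed under inclusion. Given $a_1,a_2\in I$ and $b_1,b_2\in J$, the $\llcurly$-directedness of $I$ and $J$ produces $a_3\in I$ and $b_3\in J$ with $\{a_1,a_2\}\llcurly a_3$ and $\{b_1,b_2\}\llcurly b_3$; the additivity of $\llcurly$ then gives $a_i+b_i\llcurly a_3+b_3$, so $\dda(a_1+b_1)$ and $\dda(a_2+b_2)$ both lie in $\dda(a_3+b_3)$. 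Hence $I+J$ is the directed supremum in $\cRI(C)$ of these round ideals, and applying the Scott-continuity of $\widehat f$, the identity $\widehat f(\dda c)=f(c)$, and the fact that $f$ is a homomorphism, I obtain
\[
\widehat f(I+J)=\sup_{a\in I,\,b\in J}\widehat f(\dda(a+b))=\sup_{a\in I,\,b\in J}f(a+b)=\sup_{a\in I,\,b\in J}\bigl(f(a)+f(b)\bigr).
\]

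The final, and only genuinely load-bearing, step is to split this joint supremum as $\sup_{a\in I}f(a)+\sup_{b\in J}f(b)=\widehat f(I)+\widehat f(J)$, and this is where I expect the (modest) crux to lie. It is exactly here that the hypothesis on $D$ is used: since $f$ is monotone the families $\{f(a):a\in I\}$ and $\{f(b):b\in J\}$ are directed, and the joint Scott-continuity of addition on the dcpo monoid $D$ lets the two suprema be pulled out of the sum. The whole computation is a verbatim repetition of the one carried out in the proof of the preceding proposition, now read with an arbitrary dcpo monoid $D$ in place of $\cRI(C)$.
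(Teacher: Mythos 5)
Your proof is correct and takes essentially the same route as the paper: the paper's own argument is exactly the chain of equalities $\widehat f(I+J)=\sup_{a\in I,\,b\in J}f(a+b)=\sup_{a\in I,\,b\in J}\bigl(f(a)+f(b)\bigr)=\sup_{a\in I}f(a)+\sup_{b\in J}f(b)=\widehat f(I)+\widehat f(J)$. Your extra verifications --- that the family $\dda(a+b)$ is directed (needed to invoke Scott-continuity of $\widehat f$ on the union), that $\widehat f(\dda 0)=0$, and that splitting the joint supremum uses Scott-continuity of addition in the dcpo monoid $D$ --- simply make explicit steps the paper leaves implicit.
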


\begin{proof}
It just remains to verify that the continuous extension $\widehat f$
according to  
\ref{prop:universal} is a monoid homomorphism: $\widehat f(I+J) = \widehat 
f(\bigcup_{a\in I,b\in J}\dda (a+b) = \sup_{a\in I,b\in J} \widehat
f(\dda(a+b)) = \sup_{a\in I,b\in J}f(a+b) =  \sup_{a\in
  I,b\in J}f(a)+f(b) = \sup_{a\in I}f(a) + \sup_{b\in J}f(b) =
\sup_{a\in I}\widehat f(\dda a) + \sup_{b\in J}\widehat f(\dda b) =
\widehat f(I) +\widehat f(J)$. 
\end{proof}

\begin{corollary}
For every continuous
monoid homomorphism $f\colon C \to D$ of preCuntz semigroups there is
a unique continuous monoid homomorphism $\cRI(f)\colon \cRI(C)\to
\cRI(D)$ such that $\cRI(f)(\dda a) = \dda f(a)$ for all $a\in C$, and
$\cRI(f)$ preserves $\ll$ if and only if $f$ does.  
\end{corollary}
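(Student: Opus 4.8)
The plan is to obtain $\cRI(f)$ as the functorial map already constructed at the predomain level, and then to verify the one genuinely new feature, namely additivity. Recall from the discussion following Proposition~\ref{prop:universal} that for any continuous map $f\colon C\to D$ of predomains the composite $\dda\circ f\colon C\to\cRI(D)$ is continuous into the domain---hence dcpo---$\cRI(D)$, so Proposition~\ref{prop:universal} yields a unique continuous map $\cRI(f)\colon\cRI(C)\to\cRI(D)$ with $\cRI(f)\circ\dda=\dda\circ f$, that is $\cRI(f)(\dda a)=\dda f(a)$, and this map preserves $\ll$ precisely when $f$ does. Thus existence, the defining equation, uniqueness among continuous maps, and the $\ll$-clause are all inherited verbatim; the only thing left to establish is that, when $f$ is moreover a monoid homomorphism, so is $\cRI(f)$.

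For this I would appeal to Proposition~\ref{prop:universal1}. Write $g=\dda\circ f\colon C\to\cRI(D)$. I first observe that $g$ is not merely continuous but a continuous monoid homomorphism: $f$ is one by hypothesis, and $\dda\colon D\to\cRI(D)$ is a continuous monoid homomorphism by the preceding proposition (the one asserting that $\cRI(D)$ is a Cuntz semigroup), so the composite $g$ is again a continuous monoid homomorphism. Since $\cRI(D)$ is a Cuntz semigroup, in particular a dcpo monoid, Proposition~\ref{prop:universal1} applies to $g$ and tells us that its unique continuous extension $\widehat g\colon\cRI(C)\to\cRI(D)$ is a monoid homomorphism.

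It then remains only to identify $\widehat g$ with $\cRI(f)$. Both are continuous maps $\cRI(C)\to\cRI(D)$ agreeing with $g=\dda\circ f$ on every basic round ideal $\dda a$, $a\in C$, so by the uniqueness clause of Proposition~\ref{prop:universal} they coincide; hence $\cRI(f)=\widehat g$ is a monoid homomorphism. The stronger uniqueness asserted in the corollary is then immediate: any continuous monoid homomorphism $h$ with $h(\dda a)=\dda f(a)$ for all $a$ is in particular a continuous map taking the prescribed values on the basis, so $h=\cRI(f)$, again by Proposition~\ref{prop:universal}. The equivalence ``$\cRI(f)$ preserves $\ll$ iff $f$ does'' is exactly the $\ll$-clause already recorded at the predomain level, so nothing further is needed.

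I expect the only delicate point to be this gluing step: recognising that the functorial map $\cRI(f)$ and the additive extension $\widehat g$ delivered by two different propositions are literally the same arrow, which is forced by the uniqueness of continuous extensions off the basis $\{\dda a\}$. Everything else is a direct citation of the two universal properties.
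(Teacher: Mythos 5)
Your proposal is correct and matches the paper's intended argument: the corollary is stated there without a written proof precisely because it follows, as you show, by applying Proposition~\ref{prop:universal1} to the continuous monoid homomorphism $\dda\circ f\colon C\to\cRI(D)$ (using that $\dda\colon D\to\cRI(D)$ is a continuous monoid homomorphism into the dcpo monoid $\cRI(D)$) and identifying the resulting extension, via the uniqueness clause of Proposition~\ref{prop:universal}, with the functorial map $\cRI(f)$ already constructed at the predomain level, which carries the defining equation and the $\ll$-clause. Your gluing step is exactly the implicit content of the paper's corollary, so nothing is missing.
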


We now add scalar multiplication, thus passing from monoids to
cones. The properties stated before remain valid.  
A \emph{preCuntz cone} is a preCuntz monoid which is also a cone such
that scalar multiplication is continuous as a map $\R_{>0}\times C\to
C$, where $\R_{>0}$ is considered as a predomain
with $<$ as its approximation relation. In a preCuntz cone the
map $x\mapsto rx\colon C\to C$ is continuous and it has a continuous
inverse $x\mapsto r^{-1}x$, that is, it is a homeomorphism. It follows 
 that $x\llcurly y$ implies $rx\llcurly ry$ for $0<r<+\infty$. 

The round ideal completion of a preCuntz cone is a Cuntz cone with a
universal property analogous to the universal property of the round
ideal completion of a preCuntz semigroup: A dcpo-cone is understood to
be a dcpo-monoid $D$ which is also a cone in such a way that the
scalar multiplication $\R_{>0}\times D\to D$ is Scott-continuous. For
every linear map $f$ from a preCuntz cone $C$ to a dcpo-cone $D$ the
unique continuous extension $\widehat f\colon \cRI(C)\to D$ is linear.    
 
\begin{example}
The nonnegative real numbers form a preCuntz cone $\Rp$ and $\oRp$
is its round ideal completion. In both cases the approximation relation is
$r\llcurly s$ if $r<s$ or $r=s=0$. The addition is the usual one,
extended by $r+\infty =+\infty$.  
\end{example}

\begin{example}
Our basic example $C_0(X)_+$ for a locally compact Hausdorff space
$X$ is a preCuntz cone with the usual pointwise defined addition
of functions.  But notice that $f\llcurly f'$ does
not imply $f+g\llcurly f'+g$. For example $(x-\frac{1}{2})_+\llcurly
(x-\frac{1}{4})_+$, but $x+ (x-\frac{1}{2})_+\not\llcurly
x+(x-\frac{1}{4})_+$. Also $r\ll s$ does not imply $rf\ll sf$ (for 
example if $X =[0,1]$ and $f(x)= x$, then $rf \not\ll sf$, whenever
$0<r<s$). 

The round ideal completion of $(C_0(X)_+,\ll)$ is $\LSC(X)_+$, the set
of all lower semicontinuous maps $f\colon X\to\oRp$. Here the way
below relation is given by $g\ll h$ if there is a $f\in C_0(X)_+$ and
an $\varepsilon >0$ such that $g\leq (f-\varepsilon)_+, f\leq h$. 

As a predomain, $C_0(X)$ is first countable, since
$(g-\frac{1}{n})_+ \ll (g-\frac{1}{n+1})_+$ and since for every $f\ll
g$ we have $f\ll (g-\frac{1}{n})_+$ for some $n$. I do
not think that the round $\omega$-ideal completion is what one wants
to consider here, except for those cases where it agrees with
$\LSC(X)_+$. 
\end{example}

\begin{remark}{\rm 
The notion of an abstract Cuntz semigroup has been introduced by
Coward, Elliott and Ivanescu \cite{CEI}.  
First countable preCuntz semigroups have been introduced by Antoine,
Perera and Thiel under the name of a pre-W-semigroup \cite[Section
2.1]{APT}. They  construct their $\omega$-round ideal completions and
prove a universal property of this construction \cite[Chapter
3]{APT}. }
\end{remark}

\subsection{Topologies on posets and function spaces}
\label{subsec:3.2}
Let $L$ be a poset, the order relation being denoted by $\leq$. Denote
by $L^{op}$ the same set with the opposite order $\geq$. Besides the
Scott topology $\sigma$ on $L$ one may consider the dual Scott
topology $\sigma^{op}$, the Scott topology of $L^{op}$. We are
interested in three other topologies on $L$ that look quite simple at
a first glance.
\begin{verse}
The \emph{upper topology} $\tau_{up}$ has the principal ideals $\da b =
\{y\in L\mid y\leq b\}$, $b\in L$, as a subbasis for the closed sets.

The \emph{lower topology} $\tau_{lo}$ has the principal filters $\ua a =
\{y\in L\mid y\geq a\}$, $a\in L$, as a subbasis for the closed sets

The \emph{interval topology} $\tau_{iv}$ is generated by the upper and
the lower topology. The closed
intervals $[a,b] = \ua a \cap \da b$ are closed
sets.  
\end{verse}
We will use the following general observation: For complete lattices
$L$ and $M$, a map $\beta\colon L\to M$ preserving arbitrary suprema
has a lower adjoint $\alpha\colon M\to L$ defined by $\alpha(y)=\sup
\{x\in L\mid f(x)\leq y\}$. Then $\alpha$ preserves arbitrary meets
and $\beta^{-1}(\da y)=\da\alpha(y)$ which shows that $\beta$ is
continuous for the respective upper topologies. Similarly, $\alpha$ is
continuous for the respective upper topologies.  

On $\oRp$, the extended nonnegative reals, the proper open sets of the
upper topology are the intervals $]r,+\infty]$, the proper open sets
for the lower topology are the intervals $[0,r[$, and the interval
topology is the usual compact Hausdorff topology with the open
intervals $]r,s[$ as a basis for the open sets. The analogous
statement holds for subsets of $\oRp$ as the set of nonnegative reals
$\Rp$ and the set $\R_{>0}$ of positive reals. 

In agreement with classical analysis, a function from a space $X$ into
$\oRp$ is lower  
semicontinuous\footnote{It looks incoherent to call a function \emph{lower}
  semicontinuous if it is continuous for the \emph{upper} topology on
  $\oRp$. But this is unavoidable if one want to stay coherent with
  the use of lower semicontinuity in analysis and the terminology for
  topologies used in \cite{}.} if and only if it is continuous with
respect to the upper topology on $\oRp$.  We are interested in
a special case.

Let $(P,\llcurly)$ be a predomain with its c-space topology and its
natural preorder. We denote by $\LSC(P)$ the set of all lower
semicontinuous functions $f\colon P\to \oRp$ ordered pointwise: $f\leq
g$ if $f(x)\leq g(x)$ for all $x\in P$. We want to look at the
intrinsic upper, lower and interval topology on this function space. 

\begin{proposition}\label{prop:funcspaces}
For a predomain $P$ the function space $\LSC(P)$ has the following
properties: 

(a) A subbasis for the upper topology of $\LSC(P)$ is given by:  
\[V_{x,r} = \{f\in \LSC(P)\mid f(x) > r\},\ x\in P, r\in\Rp\]
A net $(f_i)_i$ of functions in $\LSC(P)$ converges to $f\in \LSC(P)$  
for the upper topology if and only if: 
\[f(x) \leq \liminf_i f_i(x) \mbox{ for all } x\in P \eqno{\rm (upConv)}\]  

(b)  A subbasis for the lower topology is given by: 
\[W_{y,r} = \{f\in \LSC(P)\mid f(x) < r \mbox{ for some } x\in \dua
y\},\ y\in P, r\in \Rp.\]
A net $(f_i)_i$ of functions in $\LSC(P)$ converges to $f\in \LSC(P)$ 
for the lower topology if and only if: 
\[\limsup_i f_i(y) \leq f(x)\ \mbox{ whenever } y\llcurly x\mbox{ in } P
\eqno{\rm (loConv)}\] 

(c) For the interval topology and the pointwise order, $\LSC(P)$ is
a compact ordered space. A net $(f_i)_i$ of functions in $\LSC(P)$
converges to $f\in \LSC(P)$ for the interval topology if and only
conditions {\rm (upConv)} and {\rm (upConv)} hold.  
\end{proposition}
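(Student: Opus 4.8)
The plan is to organize everything around one structural fact: a function $f\colon P\to\oRp$ is lower semicontinuous for the c-space topology if and only if it is \emph{monotone} ($z\llcurly x\Rightarrow f(z)\le f(x)$) and \emph{round} ($f(x)=\sup_{z\llcurly x}f(z)$ for every $x$). Both conditions fall out of unravelling ``$\{f>r\}$ is open'' in the defining equation $U=\dua U$ of a c-space open set, the roundness relying on (IP0) to guarantee $\dda x\neq\emptyset$. Since pointwise suprema of lsc functions are again lsc, $\LSC(P)$ is a complete lattice, so its upper, lower and interval topologies are defined; throughout I use that a net (resp. an ultrafilter) converges if and only if it is eventually in (resp. contains) every \emph{subbasic} neighbourhood of the limit.

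For (a), I first note that the subbasic open sets of the upper topology are the complements $\{f\mid f\not\le g\}$ of the principal ideals $\da g$. One inclusion is immediate, since $\{f\mid f\not\le g\}=\bigcup_{x\colon g(x)<\infty}V_{x,g(x)}$. For the converse I exhibit, for each $(x,r)$, the lsc function $h_{x,r}$ equal to $r$ on $\da x$ and to $+\infty$ off $\da x$; its lower semicontinuity is exactly the openness of $P\setminus\da x$ (the complement of the closure $\da x$ of $\{x\}$), and monotonicity of $f$ gives $\{f\mid f\not\le h_{x,r}\}=V_{x,r}$. Hence $\{V_{x,r}\}$ is a subbasis. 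The convergence statement (upConv) is then read off this subbasis: $f\in V_{x,r}$ and ``eventually $f_i\in V_{x,r}$'' translate to $f(x)>r$ and ``eventually $f_i(x)>r$'', which over all $(x,r)$ says exactly $f(x)\le\liminf_i f_i(x)$.

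Part (b) is where roundness does the work. The lsc function $k_{y,r}$ taking value $r$ on the basic open set $\dua y$ and $0$ elsewhere satisfies $\{f\mid f\not\ge k_{y,r}\}=W_{y,r}$, so each $W_{y,r}$ is lower-open. Conversely, given $f\not\ge g$, I pick $x_0$ with $f(x_0)<g(x_0)$ and a real $r$ with $f(x_0)<r<g(x_0)$; roundness of $g$ yields $y\llcurly x_0$ with $g(y)>r$, and monotonicity of the functions in $W_{y,r}$ then forces $W_{y,r}\subseteq\{f'\mid f'\not\ge g\}$ while $f\in W_{y,r}$, so every subbasic lower-open set is a union of $W_{y,r}$'s. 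For (loConv) I check both implications against this subbasis, using (IP1): if $y\llcurly x$ and $f(x)<r$, interpolate $y\llcurly w\llcurly x$, so that $w\in\dua y$ and $f(w)\le f(x)<r$ give a witness; the reverse implication uses only monotonicity ($y\llcurly x'\Rightarrow f_i(y)\le f_i(x')$).

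For (c), the interval topology is generated by the two subbases together, so a net converges for it iff it converges for both, i.e. iff (upConv) and (loConv) hold. The order is closed because $\{(f,g)\mid f\not\le g\}$ is open: rewriting $f\not\le g$ as ``$\exists z\llcurly x$ with $f(z)>g(x)$'' (via roundness of $f$) and choosing $r$ strictly between $g(x)$ and $f(z)$ gives $(f,g)\in V_{z,r}\times W_{z,r}\subseteq\{f'\not\le g'\}$, so the complement of the order is a union of products of subbasic opens. I expect compactness to be the main obstacle; I would prove it by showing every ultrafilter $\mathcal U$ on $\LSC(P)$ converges. Form the pointwise ultralimit $f_0(x)=\lim_{\mathcal U}f_i(x)$ in the compact space $\oRp$ (note $f_0$ itself need not be lsc) and set $g(x)=\sup_{y\llcurly x}f_0(y)$. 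Transitivity of $\llcurly$ gives monotonicity of $g$ and interpolation gives its roundness, so $g\in\LSC(P)$; then $g(x)\le f_0(x)$ and $f_0(y)\le g(x)$ for $y\llcurly x$ are precisely (upConv) and (loConv) along $\mathcal U$, and a short check against $V_{x,r}$ and $W_{y,r}$ (again interpolating a witness inside $\dua y$) shows these subbasic neighbourhoods of $g$ lie in $\mathcal U$, whence $\mathcal U\to g$. Being compact with closed order, $\LSC(P)$ is a compact ordered space, which is in particular Hausdorff.
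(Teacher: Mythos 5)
Your proof is correct, and it reaches the statement by a genuinely more elementary route than the paper's. The paper's proof is structural: it realizes $\MON(P)$ as a closed subspace of the product $\oRp^P$, hence a compact ordered space by Tychonoff; it identifies the intrinsic upper topology on $\LSC(P)$ with the subspace topology from $\oRp^P$ via the adjunction $f\leq g \Leftrightarrow f\leq env(g)$ (which gives (a)); it derives the forward half of (loConv) by transporting pointwise lower convergence through the lower adjoint $\alpha$ of $env$; and it obtains compactness in (c) because $env\colon \MON(P)\to\LSC(P)$ is a continuous surjection, its continuity resting on the predomain formula $env(g)(x)=\sup_{y\llcurly x}g(y)$ of Lemma \ref{lem:env}, which shows $env$ preserves all suprema as well as all infima. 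You avoid this machinery: you characterize $\LSC(P)$ intrinsically as the monotone round functions, prove both subbasis claims by exhibiting explicit lower semicontinuous witnesses (your $h_{x,r}$ for the upper topology replaces the paper's adjunction step, while your $k_{y,r}=r\chi_{\dua y}$ is exactly the paper's witness for the lower topology), verify (upConv) and (loConv) by bare subbasis checks on nets, and prove compactness by showing every ultrafilter converges, regularizing the pointwise ultralimit $f_0$ to $g(x)=\sup_{y\llcurly x}f_0(y)$ and checking the subbasic neighborhoods of $g$ directly — all steps I have verified go through, with (IP0), (IP1) and transitivity entering exactly where you say. Note that your separation argument for the closed order coincides with the paper's Lemma \ref{lem:sepa}, and your regularized limit $g$ is precisely $env(f_0)$, so the same key formula drives both compactness proofs; the difference is that you apply it one ultralimit at a time, whereas the paper packages it as a continuous linear retraction, an investment it reuses heavily afterwards (Proposition \ref{prop:funccones}, Lemma \ref{lem:env1}, Theorem \ref{th:dualcone}, and the stable compactness of Proposition \ref{prop:stablycomp} via Lemma \ref{lem:retract}). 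So your version is more self-contained and adjunction-free, while the paper's buys reusable structure for the rest of the section. One small wording slip: in (b), the inclusion $W_{y,r}\subseteq\{f'\in\LSC(P)\mid f'\not\geq g\}$ uses monotonicity of $g$ (from $y\llcurly x$ and $f'(x)<r<g(y)$ one gets $f'(x)<g(x)$), not monotonicity of the functions in $W_{y,r}$; the argument is otherwise exactly as intended.
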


The following developments contain a proof for the proposition.

We begin with a set $P$ and we consider the power $\oRp^P$ of all
functions $g\colon 
P\to\oRp$. With respect to the pointwise order,  $\oRp^P$ is a complete
lattice. Suprema and infima of arbitrary families of functions are
formed pointwise. The lower, upper and interval topology on $\oRp^P$
agree with the product topologies of the  lower, upper and interval
topology on $\oRp$, respectively. A net $(f_i)_i$ in $\oRp^P$ converges
to $f$ for the upper (resp., lower) topology if and only if does so
pointwise, that is, if and only if for every $x\in P$,
\[f(x) \leq\liminf_i f_i(x) \mbox{ (resp., } \limsup_i f_i(x)\leq
f(x) \mbox{)} \eqno{\rm (Conv)}\]   
As a product of compact ordered spaces, $\oRp^P$ is a compact ordered
space for the interval topology.

Suppose now that $P$ is a preordered set and consider the collection
$\MON(P)\subseteq \oRp^P$ of all monotone functions. Since
pointwise suprema and infima of monotone functions are monotone,
$\MON(P)$ is a complete sublattice of $\oRp^P$. Its 
intrinsic lower, upper and interval topology agree with the subspace topology
induced by the lower, upper and interval topology on
$\oRp^P$. Convergence is characterized as above and $\MON(P)$ is closed
in $\oRp^P$, hence, a compact ordered space, for the interval topology,

We now specialize further and suppose that $P$ is a topological space with
its specialization preorder. The lower semicontinuous functions
$f\colon P\to 
\oRp$ form a subset $\LSC(P)$ of $\MON(P)$, since continuous functions
preserve the specialization preorder. The pointwise supremum of a
family of lower semicontinuous functions is again lower
semicontinuous, that is, the canonical injection of $\LSC(P)$ into
$\MON(P)$ preserves arbitrary suprema. It follows that $\LSC(P)$ is a
complete lattice, too, and that the lower adjoint
$env\colon \MON(P)\to \LSC(P)$ that assigns to every order preserving
map $g\colon P\to \oRp$ its lower semicontinuous envelope $env(g) =
\sup \{f\in \LSC(P)\mid f\leq g\}$ preserves arbitrary infima. 
The lower semicontinuous envelope $env(g)$ is also given by
\[env(g)(x) = \liminf_{\mathfrak u_x}f(x) = \sup_{U\in\mathfrak
  u_x}\inf_{z\in U}f(z)  \eqno{\rm (Env)}\]
for every $x\in P$, where $\mathfrak u_x$ is any neighborhood basis of
$x$. The intrinsic upper topology of the lattice $\LSC(P)$ is the
subspace topology induced by the upper topology on $\oRp^P$. Indeed,
if $g\in \MON(P)$ and $f\in \LSC(P)$, then: \[f\leq g \mbox{ if and only if }
f\leq env(g) \eqno{\rm (Adj1)}\] Thus convergence in $\LSC(P)$ with
respect to the upper topology is characterized by condition
(upConv).  This proves claim (a). 

Infima in $\LSC(P)$ are not formed pointwise, in general. The infimum
in $\LSC(P)$ of a family of functions $f_i$ is the lower semicontinuous
envelope of the pointwise infimum. The intrinsic lower topology of the
lattice $\LSC(P)$ need no longer be the subspace topology induced by the
lower topology on $\oRp^P$; it can be strictly finer.

We now suppose that $P$ is a predomain.
By definition, a subbasis for the closed sets for the lower topology
in $\LSC(P)$ is given by the sets $\ua h=\{f\in \LSC(P)\mid h\leq f\}$
where $h$ ranges over $\LSC(P)$. If $f\not\in \ua h$, there is an $x_0\in
P$ such that $f(x_0)<h(x_0)$. Choose $r$ such that $f(x_0)<r<h(x_0)$. By lower
semicontinuity, there is a $y\llcurly x_0$ such that $r<h(y)$. Thus
$f \in W_{y,r)} =\{g\in \LSC(P)\mid g(x)< r \mbox{ for some } x\in\dua
y\}$ and $W_{y,r)}$ is disjoint from $\da h$. Moreover $W_{y,r}$ is
open for the lower topology, since it is the complement of the
subbasic lower closed set of all $f\in \LSC(P)$ below the simple
lower semicontinuous function $r\chi_{\dua y}$ which has value $r$ if $x\in
\dua u$ and value $0$ else. Thus the sets $W_{y,r}$ form a subbasis
for the lower topology of $\LSC(P)$.
 
\begin{lemma}\label{lem:env}
For every monotone map $g$ from a predomain $P$ to $\oRp$, the
lower semicontinuous envelope\footnote{The lower semicontinuous
  envelope as given by formula (Env) is standard in analysis. The
  formula given in the special situation of this lemma is standard in
  Domain Theory (see, e.g., \cite[]{compend}, cite[]{dom}). It has
  been rediscovered in \cite[Lemma 4.7]{ERS}, \cite[Lemma 2.2.1]{R}.}
is given by 
\[env(g)(x) = \sup_{y\ll x} g(y)\] for all $x\in P$
and the map $env\colon \MON(P)\to \LSC(P)$ preserves not only arbitrary
infima but also arbitrary suprema. 
\end{lemma}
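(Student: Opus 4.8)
The plan is to establish the closed-form expression first and then read off the preservation of suprema as an immediate consequence. Write $h(x) := \sup_{y \llcurly x} g(y)$ for the candidate value of $env(g)(x)$. First I would check that $h$ lies below $g$ and is lower semicontinuous, which already gives $h \le env(g)$. For the first point, note that $y \llcurly x$ forces $y \le x$ in the natural preorder: if $a \llcurly y \llcurly x$ then $a \llcurly x$ by transitivity, so $\dda y \subseteq \dda x$; monotonicity of $g$ then yields $g(y) \le g(x)$ for every such $y$, hence $h(x) \le g(x)$. For lower semicontinuity I would compute the strict superlevel sets: $h(x) > r$ holds exactly when some $y \llcurly x$ satisfies $g(y) > r$, so
\[\{x \in P \mid h(x) > r\} = \bigcup_{g(y) > r} \dua y,\]
a union of basic open sets of the c-space topology and therefore open. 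Thus $h \in \LSC(P)$ with $h \le g$, and since $env(g)$ is by definition the largest lower semicontinuous minorant of $g$ while suprema in $\LSC(P)$ are computed pointwise, I get $h \le env(g)$ pointwise.

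The reverse inequality $env(g) \le h$ is where the predomain axioms actually do the work, and I expect it to be the only genuine step. Applying formula (Env) with the c-space neighborhood basis $\{\dua y \mid y \llcurly x\}$ of $x$ gives
\[env(g)(x) = \sup_{y \llcurly x}\ \inf_{z \in \dua y} g(z).\]
Now fix $y \llcurly x$. By the interpolation property (IP1) there is a $w$ with $y \llcurly w \llcurly x$; then $w \in \dua y$, so $\inf_{z \in \dua y} g(z) \le g(w) \le \sup_{w' \llcurly x} g(w') = h(x)$. Taking the supremum over all $y \llcurly x$ yields $env(g)(x) \le h(x)$. Combined with the previous paragraph this proves $env(g)(x) = \sup_{y \llcurly x} g(y)$. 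The interpolation step is the crux: it is exactly what lets the infimum over the basic neighborhood $\dua y$ be witnessed by a single interpolant $w$ that is still way-below $x$.

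Finally, preservation of arbitrary suprema falls straight out of the formula. Given a family $(g_i)$ in $\MON(P)$, both the supremum $\sup_i g_i$ taken in $\MON(P)$ and the supremum $\sup_i env(g_i)$ taken in $\LSC(P)$ are computed pointwise, so for every $x$,
\[env\Bigl(\sup_i g_i\Bigr)(x) = \sup_{y \llcurly x}\ \sup_i g_i(y) = \sup_i\ \sup_{y \llcurly x} g_i(y) = \sup_i env(g_i)(x),\]
the middle equality being only the commutativity of suprema. Hence $env$ preserves arbitrary suprema, complementing the preservation of infima already supplied by the adjunction (Adj1).
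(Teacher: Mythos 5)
Your proof is correct, and its supremum-preservation part coincides with the paper's (the same pointwise interchange of suprema, using that suprema in both $\MON(P)$ and $\LSC(P)$ are computed pointwise). The derivation of the formula, however, takes a genuinely different, if parallel, route. The paper applies (Env) to the neighborhood basis of \emph{saturated} sets $\ua y$, $y\llcurly x$ --- a neighborhood basis need not consist of open sets --- whereupon the infimum collapses at once: $y$ is the least element of $\ua y$ in the natural preorder, so monotonicity gives $\inf_{z\in\ua y}g(z)=g(y)$, and the formula falls out in one line with no interpolation and no separate lower-semicontinuity check. You instead work with the \emph{open} basic neighborhoods $\dua y$, whose infimum is not attained at $y$; you compensate with (IP1) to obtain the upper bound $env(g)\le h$, and you establish the lower bound $h\le env(g)$ from scratch by verifying that $h$ is a lower semicontinuous minorant of $g$ via the superlevel-set identity $\{x\mid h(x)>r\}=\bigcup_{g(y)>r}\dua y$, together with the observation that $y\llcurly x$ implies $y\le x$ in the natural preorder. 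What your route buys: it makes explicit that the right-hand side is itself lower semicontinuous (the paper leaves this implicit inside (Env)), and it leans only on the open-set description of the c-space topology, so it would survive even if one distrusted (Env) for non-open bases --- indeed your sandwich argument uses (Env) only for the inequality $env(g)\le h$, and even that could be replaced by a direct argument from the definition of openness. What the paper's choice buys: brevity, and it showcases the characteristic c-space feature that each point has a neighborhood basis of principal filters. Both arguments ultimately rest on the same predomain ingredients --- interpolation in your case, the filter basis (itself a consequence of the c-space axiom) in the paper's --- so the difference is one of decomposition rather than substance, and there is no gap.
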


\begin{proof}
In a predomain, an element $x$ has a neighborhood basis of
principal filters $\ua y$ with $y\llcurly x$. If $g$ is monotone, we
have that $\inf_{z\in\ua y}f(z)=f(y)$ and the above formula for the
lower semicontinuous envelope simplifies to $env(g)(x) =
\sup_{y\llcurly x} f(y)$.  

We now take a family of monotone functions $g_i\colon
P\to\oRp$ and we show that $env(\sup_i g_i) = \sup_i
env(g_i)$. Using the formula for $env(g)$ just proved we have indeed,
$env(\sup_i g_i)(x) = \sup_{y\llcurly x}\sup_i 
g_i(y) = \sup_i \sup_{y\llcurly x} g_i(x) =\sup_i env(g_i)(x)$.
\end{proof}

Since the map $env$ maps preserves arbitrary infima and arbitrary
suprema, it is continuous for the respective lower, upper and
interval topologies. It also has a lower adjoint $\alpha$ characterized by
\[g\leq \alpha(f) \mbox{ if and only if } env(g) \leq f \eqno{\rm (Adj2)}\]
for $f\in \LSC(P)$ and $g\in \MON(P)$. Explicitly, $\alpha(f) = \sup
\{g\in \MON(P)\mid env(g)\leq f\}$. 
  
We now finish the proof of claim (b) by considering a net $(f_i)_i$
in $\LSC(P)$. Suppose firstly that the net $f_i$ converges to some
$f\in \LSC(P)$ for the lower topology. Since $\alpha$ is a lower
adjoint, it is continuous for the lower topologies so that the net
$\alpha(f_i)$ converges to $\alpha(f)$ for the lower topology in
$\MON(P)$. This means that $\limsup_i \alpha(f_i)(x) \leq \alpha(f)(x)$
for every $x\in P$ by condition (Cond). Passing to the lower
semicontinuous envelope on both sides yields $sup_{y\llcurly
  x}\limsup_i \alpha(f_i)(y) \leq f(x)$ hence $\limsup_i
\alpha(f_i)(y) \leq f(x)$ whenever $y\llcurly x$ as in claim
(b). Suppose conversely that the latter property holds. In order to
prove that the net $(f_i)_i$ converges to $f$ we take any subbasic
neighborhood $W_{y,r}$ of $f$. Then $f$ satisfies $f(x_0) <r$ for some
$x_0\in \dua y$. Choose any $x$ such that $y\llcurly x\llcurly
x_0$. Since $\limsup_if_i(z) \leq f(x_0) <r$, there is an 
index $j$ such that $f_i(z) < r$ for all $i\geq j$ and we conclude
that $f_i\in W_{y,r}$ for all $i\geq j$. 

In order to prove claim (c) we first observe that $\LSC(P)$ is compact
for the interval topology, since $\MON(P)$ is compact for the
interval topology and the map $env\colon \MON(P)\to \LSC(P)$ is
continuous. The following lemma shows that the order in $\LSC(P)$ is
closed for the interval topology so that $\LSC(P)$ is a compact ordered
space.

\begin{lemma}\label{lem:sepa}
Let $f\not\leq h$ in $\LSC(P)$. Then there is a subbasic upper open
neighborhood $V$ of $f$ disjoint from some subbasic lower open
neighborhood $W$ of $h$.
\end{lemma}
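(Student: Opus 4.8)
The plan is to extract a witnessing point from $f\not\le h$ and then to separate $f$ and $h$ by subbasic sets anchored at a \emph{single} element of $P$, relying on the fact that lower semicontinuous functions are monotone to force the two sets to be disjoint. Since the order on $\LSC(P)$ is pointwise, $f\not\le h$ means there is some $x_0\in P$ with $h(x_0)<f(x_0)$. Because $h(x_0)<f(x_0)\le+\infty$ we have $h(x_0)\in\Rp$, so I can fix a finite $r\in\Rp$ with $h(x_0)<r<f(x_0)$.

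Next I would locate a point lying $\llcurly$-below $x_0$ at which $f$ is still above $r$. As $f$ is lower semicontinuous, i.e. continuous for the upper topology on $\oRp$, the set $f^{-1}(]r,+\infty])$ is open and contains $x_0$. Since the sets $\dua x$ form a basis of the c-space topology $\tau_{\llcurly}$, there is a $y$ with $x_0\in\dua y\subseteq f^{-1}(]r,+\infty])$, that is, $y\llcurly x_0$ and $f(z)>r$ for every $z\in\dua y$. The value of $f$ at $y$ itself need not exceed $r$, so here I invoke interpolation: by (IP1) choose $y'$ with $y\llcurly y'\llcurly x_0$. Then $y'\in\dua y$ gives $f(y')>r$, while $y'\llcurly x_0$ gives $x_0\in\dua{y'}$.

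With $y'$ in hand I set $V=V_{y',r}$ and $W=W_{y',r}$, both taken from the subbases of Proposition \ref{prop:funcspaces}. Then $f\in V$ because $f(y')>r$, and $h\in W$ because $x_0\in\dua{y'}$ together with $h(x_0)<r$. For disjointness, suppose some $g\in\LSC(P)$ lay in $V\cap W$: then $g(y')>r$, and $g(x)<r$ for some $x\in\dua{y'}$, i.e. $y'\llcurly x$. Since $\llcurly$ implies the natural order (if $a\llcurly b$ and $c\in\dda a$ then $c\llcurly a\llcurly b$, so $c\in\dda b$ by transitivity, whence $\dda a\subseteq\dda b$ and $a\le b$), we get $y'\le x$, and monotonicity of the lower semicontinuous $g$ yields $g(y')\le g(x)<r$, contradicting $g(y')>r$. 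Hence $V\cap W=\emptyset$.

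The only delicate point is the coincidence of anchors: the disjointness argument needs the largeness of $f$ and the smallness of $h$ to be detected \emph{at the same} $y'$, yet lower semicontinuity only gives largeness of $f$ on a whole basic neighborhood $\dua y$ rather than at $y$ directly. Interpolation is exactly what bridges this gap, producing a single $y'$ that is $\llcurly$-above $y$ (so $f(y')>r$) and $\llcurly$-below $x_0$ (so that $x_0\in\dua{y'}$ carries the small value of $h$). Everything else is immediate from the subbasis descriptions and the inclusion $\LSC(P)\subseteq\MON(P)$.
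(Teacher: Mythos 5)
Your proof is correct and follows essentially the same route as the paper: locate $x_0$ with $f(x_0)>r>h(x_0)$, produce a point $y'\llcurly x_0$ with $f(y')>r$, and separate with the subbasic sets $V_{y',r}$ and $W_{y',r}$, which are disjoint because lower semicontinuous functions are monotone for the natural preorder. Your basis-plus-interpolation detour merely spells out what the paper compresses into ``by lower semicontinuity, there is a $y\llcurly x_0$ such that $f(y)>r$'' (which follows in one step from property (2) in the definition of $\tau_{\llcurly}$-open sets), so the elaboration is harmless and, if anything, clarifying.
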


\begin{proof}
Since $f\not\leq h$. There is an $x_0$ such that $f(x_0)>h(x_0)$. Choose an
$r$ with  $f(x_0) > r > h(x_0)$. By lower semicontinuity, there is a
$y\llcurly x_0$ such that $f(y)>r$. Now let $V_{y,r}$ be the sets of
all $f\in \LSC(P)$ such that $f(y)> r$ and $W_{y,r}$ the set of all
$f\in \LSC(P)$ such that $f(x)<r$ for some $x$ with $y\llcurly x$. Then
$V_{y,r}$ and $W_{y,r}$ are disjoint subbasic open sets for the 
upper and lower topology, respectively, containing $f$ and $h$,
respectively.       
\end{proof}



On $\oRp$ addition is jointly continuous with respect to each of
the three topologies (upper, lower and interval topology) as a map
$\oRp\times \oRp\to\oRp$. Multiplication is jointly continuous as a map
$\R_{>0}\times\oRp\to \oRp$ for these three topologies\footnote{There is no
way to extend the multiplication to all of $\oRp$ in such a way that it
remains continuous for the interval topology.This fact had
  been overlooked in \cite{ERS} and had led to misleading statements
  in \cite{ERS}. If we extend
multiplication by $+\infty\cdot 0=0 = 0\cdot(+\infty)$, it remains
continuous for the upper topology, if we extend it by  $+\infty\cdot
0=+\infty = 0\cdot(+\infty)$, it remains continuous for the lower
topology.}.
Thus $\oRp$ is a topological cone with respect to all of the three
topologies (upper, lower and interval topology), where a
\emph{topological cone} is a cone $C$ with a topology such that addition
and scalar multiplication are jointly continuous as maps $C\times
C\to C$ and $\R_{>0}\times C\to C$, respectively. This definition has
to be read with caution: The question which topology to use on
$\R_{>0}$; one has to use the upper, lower and interval topology,
respectively, in agreement with the topology used on $C$.  

Since $\oRp$ is a topological cone, the power $\oRp^P$ is a
topological cone, too, for the
pointwise defined addition and multiplication with real numbers
$r>0$, and this for each of the three topologies (lower, upper and
interval topology). For a preordered set $P$, the monotone functions
form a subcone $\MON(P)$.  
For a topological space $P$, the sum $f+g$ of two lower semicontinuous
functions $f,g\in \LSC(P)$ 
and the scalar multiple $rf$ for $0<r<+\infty$ are lower
semicontinuous, too. Thus $\LSC(P)$ is a subcone of
$\MON(P)$. Furthermore, if $P$ is a predomain, the map
$env\colon \MON(P)\to \LSC(P)$ is linear. This is easily verified using
the formula for the lower semicontinuous envelope in Lemma
\ref{lem:env}; but there is also a general argument that we present
after the statement of the next proposition. We conclude: 

\begin{proposition}\label{prop:funccones}
Let $P$ be a predomain. $\MON(P)$ and $\LSC(P)$ are ordered topological cones
for their intrinsic upper, lower and interval topologies, respectively. The
map $env\colon \MON(P)\to \LSC(P)$ is linear, monotone and continuous
for each of the three topologies.
\end{proposition}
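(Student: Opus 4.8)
The plan is to verify the three asserted properties of $\MON(P)$ and $\LSC(P)$ piece by piece, relying on the structure already assembled in the preceding paragraphs. The claim that both are ordered topological cones splits into three parts: that each is a cone, that the respective intrinsic topologies make addition and scalar multiplication jointly continuous, and that the orders are closed for the interval topologies. The cone structure and the fact that $\MON(P)$ and $\LSC(P)$ are subcones of $\oRp^P$ have already been established in the paragraph immediately preceding the statement. Since $\oRp$ is a topological cone for each of the upper, lower and interval topologies, the product $\oRp^P$ is as well, with operations defined pointwise and convergence characterized pointwise by condition (Conv). For $\MON(P)$, which is a closed subcone carrying the subspace topology for each of the three topologies, joint continuity of addition and scalar multiplication follows immediately by restriction, and its being a compact ordered space was noted earlier.

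The interesting case is $\LSC(P)$. First I would handle the upper topology: since the upper topology on $\LSC(P)$ is the subspace topology induced from $\oRp^P$ (this was established via condition (Adj1) and formula (upConv)), joint continuity of the cone operations on $\LSC(P)$ for the upper topology is inherited directly from $\oRp^P$. The subtlety lies with the lower and interval topologies, where the intrinsic lower topology on $\LSC(P)$ is strictly finer than the subspace topology, so continuity cannot simply be inherited. Here the plan is to exploit the linear retraction $env\colon\MON(P)\to\LSC(P)$. By Lemma \ref{lem:env}, $env$ preserves arbitrary infima and arbitrary suprema, hence is continuous for all three topologies, and it is linear. Since addition $\MON(P)\times\MON(P)\to\MON(P)$ is jointly continuous and $\LSC(P)$ sits inside $\MON(P)$, the addition on $\LSC(P)$ factors as $env$ applied to the addition computed in $\MON(P)$; continuity of $env$ together with continuity of addition in $\MON(P)$ yields joint continuity of addition in $\LSC(P)$ for the intrinsic topologies. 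The same factorization argument handles scalar multiplication.

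For claim (c) concerning the interval topology, compactness of $\LSC(P)$ was already argued (as the continuous image $env(\MON(P))$ of the compact space $\MON(P)$), and Lemma \ref{lem:sepa} shows the order is closed, so $\LSC(P)$ is a compact ordered space. What remains for the proposition is to confirm that $env$ itself is linear, monotone and continuous for each of the three topologies. Monotonicity is immediate from the defining formula $env(g)(x)=\sup_{y\ll x}g(y)$, continuity for all three topologies follows because $env$ preserves both infima and suprema (Lemma \ref{lem:env}), and linearity is exactly the statement promised in the text via the formula of Lemma \ref{lem:env}.

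The main obstacle I anticipate is the continuity of addition and scalar multiplication for the strictly finer intrinsic lower topology on $\LSC(P)$, since the naive subspace argument fails there. The resolution is the $env$-factorization just described: every cone operation on $\LSC(P)$ is the composite of the corresponding operation in $\MON(P)$ followed by the continuous linear map $env$, and this reduces the lower- and interval-topology continuity on $\LSC(P)$ to the already-established continuity on $\MON(P)$. I would therefore present the proof by first disposing of the $\MON(P)$ case and the upper topology on $\LSC(P)$ by subspace inheritance, then deploy the $env$-factorization to cover the lower and interval topologies, and finally invoke Lemmas \ref{lem:env} and \ref{lem:sepa} for the remaining properties of $env$ and the compact-ordered-space conclusion.
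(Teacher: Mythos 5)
Your architecture matches the paper's for most of the statement: the paper likewise obtains the cone structure and topologies on $\MON(P)$ by pointwise inheritance from $\oRp^P$, identifies the intrinsic upper topology of $\LSC(P)$ with the subspace topology via (Adj1), and gets linearity, monotonicity and continuity of $env$ from Lemma \ref{lem:env} and the adjunction arguments; those parts of your proposal are sound. But there is a genuine gap exactly at the point you single out as ``the interesting case''. Your directional claim is reversed: the intrinsic lower topology of $\LSC(P)$ is \emph{coarser} than the subspace topology, not finer. Indeed, the subbasic closed sets of the intrinsic lower topology are the principal filters $\ua h\cap\LSC(P)$ with $h\in\LSC(P)$, while the subspace topology has as subbasic closed sets all traces $\ua g\cap\LSC(P)$ with $g\in\MON(P)$ arbitrary; the first family is contained in the second, and the containment of topologies can be strict. (Concretely: let $P=(0,1)$ with $y\llcurly x$ iff $y<x$, so that $\LSC(P)$ is the set of nondecreasing left-continuous functions; for $g=\chi_{[1/2,1)}\in\MON(P)$, the trace $\{f\in\LSC(P)\mid g\leq f\}$ is not closed intrinsically, since the decreasing net $f_t=\chi_{(t,1)}$, $t\uparrow 1/2$, lies in it but converges in the intrinsic lower topology to its infimum in $\LSC(P)$, namely $env(\chi_{[1/2,1)})=\chi_{(1/2,1)}$, which is not in it.) The paper's sentence ``it can be strictly finer'' refers to the subspace topology, and reading it the other way is what breaks your argument: the inclusion $i\colon(\LSC(P),\tau_{lo})\to(\MON(P),\tau_{lo})$ is \emph{not} continuous, so your factorization $+_{\LSC(P)}=env\circ +_{\MON(P)}\circ(i\times i)$ only proves continuity of addition from the \emph{finer} subspace topology on the domain into the \emph{coarser} intrinsic topology on the codomain. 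That is strictly weaker than the proposition, which requires joint continuity with the intrinsic lower topology on the domain as well; and since $env\times env$ is not a quotient map, the surjectivity of $env$ does not rescue the argument. The same defect propagates to your treatment of scalar multiplication and of the interval topology.

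The fix is a direct verification on the subbasis $W_{y,r}$ established in Proposition \ref{prop:funcspaces}(b), in the spirit of the paper's toolkit. Suppose $f+g\in W_{y,r}$, say $f(x)+g(x)<r$ with $y\llcurly x$. Interpolate $y\llcurly x'\llcurly x$ and choose $s,t$ with $f(x)<s$, $g(x)<t$ and $s+t\leq r$. Then $W_{x',s}\times W_{x',t}$ is a lower-open neighborhood of $(f,g)$ (witnessed by $z=x$ in both factors), and for any $f'\in W_{x',s}$, $g'\in W_{x',t}$ with witnesses $z_1,z_2\in\dua x'$ one has $x'\leq z_1$ and $x'\leq z_2$ in the natural preorder, whence by monotonicity $f'(x')\leq f'(z_1)<s$ and $g'(x')\leq g'(z_2)<t$, so $(f'+g')(x')<r$ with $x'\in\dua y$, i.e.\ $f'+g'\in W_{y,r}$. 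Scalar multiplication is handled by the same interpolation trick with $\R_{>0}$ carrying its lower topology, and continuity for the interval topology follows since it is the join of the upper and lower topologies; Lemma \ref{lem:sepa} then closes the order and yields the compact ordered structure as you say. (To be fair, the paper's own ``proof'' is only the assembly paragraph preceding the proposition and does not spell out this lower-topology subtlety either; but your proposal explicitly flags the subtlety and then resolves it by an argument that does not go through, so the gap must be charged to the proposal. Alternatively, the stably compact machinery of Proposition \ref{prop:stablycomp} can be invoked, but that proposition is itself downstream of the present one in the paper's development.)
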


Recall that, in the previous proposition, according to our definition
of a topological cone, the 
set $\R_{>0}$ of positive scalars has to be equipped with the
respective upper, lower, and interval topology. 

We will use the following observation several times:

\begin{observation}\label{obs}
Let $C$ and $D$ be cones each with a topology that agrees with the
upper topology on the rays $\R_{>0}\cdot a$. Then
every continuous monoid homomorphism $f\colon C\to D$ is homogeneous,
hence linear.  
{\rm  Indeed, by additivity one
obtains $f(qa) =qf(a)$ for every rational number $q>0$. For a real
number $r>0$ choose an increasing sequence $q_n$ of rational numbers
with supremum $r$. Then $a = \sup q_n a$ and $rf(a) = \sup_n q_nf(a)$
since $r\mapsto rx$ is supposed to be continuous for the respective
upper topologies.   Since $f$ is continuous for the respective upper
topologies, we finally obtain $f(ra) =f(\sup_nq_n a) = \sup_nf(q_na) =
\sup_n q_nf(a) = rf(a)$.} 
\end{observation}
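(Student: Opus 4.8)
The plan is to prove the only nontrivial point, namely homogeneity $f(ra)=r\,f(a)$ for all $r>0$ and all $a\in C$; since $f$ is already additive and sends $0$ to $0$, homogeneity is exactly what upgrades it to a linear map. I would first dispose of rational scalars using nothing but the cone axioms and additivity. For a positive integer $n$ the identity $(r+s)b=rb+sb$ gives $nb=b+\dots+b$, whence $f(nb)=n\,f(b)$ for every $b$. Writing a positive rational as $q=m/n$ and using $(rs)b=r(sb)$, one gets $n\,f(qa)=f\bigl(n(qa)\bigr)=f(ma)=m\,f(a)$, and dividing by $n$ — legitimate since $\tfrac1n(nx)=x$ holds in any cone — yields $f(qa)=q\,f(a)$.

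The real case is then reached by approximation from below, and the one tool I would isolate is the following elementary fact about the upper topology. Suppose $g$ is a continuous monotone map between two spaces carrying the upper topology of their specialization orders, and $(s_i)$ is an increasing net with $s=\sup_i s_i$. Then $g(s)=\sup_i g(s_i)$. To see this, note first that an increasing net converges in the upper topology to its supremum: for a subbasic neighbourhood $T\setminus\da b$ of $s$ we have $s\not\le b$, so not every $s_i$ lies below $b$, and once one term escapes $\da b$ all later terms do too by monotonicity of the net. Hence $s_i\to s$, and continuity gives $g(s_i)\to g(s)$. Now $g(s)$ is an upper bound of $(g(s_i))$ by monotonicity of $g$; and if $u$ were any upper bound with $g(s)\not\le u$, then $T\setminus\da u$ would be an open neighbourhood of $g(s)$ which the net $g(s_i)\le u$ never enters, contradicting $g(s_i)\to g(s)$. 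Thus $g(s)\le u$ for every upper bound $u$, so $g(s)=\sup_i g(s_i)$.

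With this fact in hand the conclusion is a short chain. Fix $a$ and $r>0$ and choose rationals $q_n\uparrow r$. Applying the fact to the scaling map $r\mapsto ra$ on the ray $\R_{>0}\cdot a$ — whose topology is, by hypothesis, the upper topology — gives $ra=\sup_n q_n a$; applying it to the continuous map $f$ at the increasing net $(q_n a)$ gives $f(ra)=\sup_n f(q_n a)$; and applying it once more to the scaling map $r\mapsto r\,f(a)$ on the ray $\R_{>0}\cdot f(a)$ in $D$ gives $\sup_n q_n f(a)=r\,f(a)$. Combining these with the rational case $f(q_n a)=q_n f(a)$ yields $f(ra)=\sup_n f(q_n a)=\sup_n q_n f(a)=r\,f(a)$, as required.

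The step I expect to be the genuine obstacle — and the reason I route everything through the monotone-net lemma rather than through a naive limit argument — is that the upper topology is highly non-Hausdorff, so limits of nets are not unique: one cannot simply observe that $(f(q_n a))$ converges both to $f(ra)$ and to $r\,f(a)$ and conclude that these coincide. What rescues the argument is that all three maps involved are monotone, so each relevant supremum sits below its candidate limit, and the upper-topology neighbourhoods $T\setminus\da u$ are precisely sharp enough to force equality. The only hypothesis doing real work is that the topology restricts to the upper topology on the rays (together with continuity of scaling, which is the standing assumption for the topological cones of this section); everything else is formal.
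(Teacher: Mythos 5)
Your overall route is the paper's route: the rational case by additivity, then an increasing rational approximation $q_n\uparrow r$, with the upper topology on rays and continuity of $f$ carrying the real case. Your monotone-net lemma (increasing nets converge to their suprema in the upper topology, and continuous monotone maps between spaces carrying upper topologies send such suprema to suprema) is correct as stated, and it is exactly the content the paper compresses into the phrase ``since $f$ is continuous for the respective upper topologies, $f(\sup_n q_na)=\sup_n f(q_na)$''; in that sense your write-up is more rigorous than the paper's on the convergence-to-supremum step.

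There is, however, one formal overreach in your chain. The observation assumes the topology agrees with the upper topology only \emph{on the rays}, whereas your lemma requires both the domain and the codomain to carry the upper topology of their specialization orders \emph{globally} (its last step needs the principal ideal $\da u$ to be closed for an arbitrary upper bound $u$ in the codomain). So the middle application, to $f\colon C\to D$ itself, is not licensed by your hypotheses; moreover, the first and third applications produce suprema computed inside the rays, and a least upper bound in a subposet need not be one in the ambient cone, so the final chain silently equates a ray-supremum in $D$ with an ambient one. The paper's one-line proof is no more careful here, and the repair uses only ingredients you already display in your closing paragraph. First, $q_na\to ra$ in $C$, since convergence in the ray (where the topology is the upper topology) persists in the ambient space, and hence $q_nf(a)=f(q_na)\to f(ra)$ in $D$ by continuity. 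Since the specialization preorder of a subspace is the restriction of the ambient one, reading off the ray of $f(a)$ gives $q_nf(a)\le rf(a)$ in $D$; as open sets are upper sets for specialization, every open neighborhood of $f(ra)$ eventually contains some $q_nf(a)$ and therefore contains $rf(a)$, i.e.\ $f(ra)\le rf(a)$. Conversely, every open neighborhood of $rf(a)$ traces on the ray of $f(a)$ as an upper-open set, hence contains some $q_nf(a)$ with $q_n<r$, and $q_nf(a)=f(q_na)\le f(ra)$ because continuity makes $f$ monotone for the specialization preorders; so $rf(a)\le f(ra)$. Antisymmetry (the cones in play are partially ordered, equivalently the relevant points are $T_0$-separated) then gives $f(ra)=rf(a)$. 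In short: same approach as the paper, correct in substance, but the invocation of your lemma for $f\colon C\to D$ should be replaced by this specialization argument, which is what your non-Hausdorff caveat at the end is in fact gesturing at.
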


One may ask, why we restrict scalar multiplication to $\R_{>0}$ and
why we do not extend it to $r=0$ and $r=+\infty$. The reason is that we
have to treat the three cases differently concerning such an
extension. While there is no continuous extension of scalar
multiplication to $\oRp$ for the interval topology, we can proceed as
follows for the two other cases. 

Using the upper topology, we may define $0\cdot r = 0
= r\cdot 0$ for all $r\in\oRp$ (including $r=+\infty$) and
$r\cdot(+\infty) = +\infty =(+\infty)\cdot r$ for $r>0$. This
multiplication is continuous on $\oRp$ for the upper topology and can
be extended pointwise to a multiplication of functions $g\in \MON(P)$ and $f\in
\LSC(P)$ with scalars $r\in \oRp$ which remains continuous for the
upper topologies and which satisfies all defining laws of scalar
multiplication in cones. 

Using the lower topology, we may define $0\cdot r = 0
= r\cdot 0$ for all $r<+\infty$ and
$r\cdot(+\infty) = +\infty =(+\infty)\cdot r$ for all $r\in \oRp$
(including $r=+\infty$. This
multiplication is continuous on $\oRp$ for the lower topology and can
be extended pointwise to a multiplication of functions $g\in \MON(P)$ and $f\in
\LSC(P)$ with scalars $r\in \oRp$ which remains continuous for the
lower topologies and which satisfies all defining laws of scalar
multiplication in cones.

\begin{remark}{\rm
In domain theory one usually stresses the Scott topology. In the
context of the this section, the Scott topology agrees with the upper
topology $\tau_{up}$. This is the case for $\oRp^P$, $\MON(P)$ and, in
case of a predomain $P$, also for $\LSC(P)$. The same holds for the
dual Scott topology and the lower topology   $\tau_{lo}$ in all of
these cases. The reason is that this phenomenon occurs in complete
completely distributive lattices in general (see, e,g,, \cite[Section
VII-3]{compend}). We have preferred to use the lower and upper topology
since their definition is simpler.}
\end{remark}

\subsection{Compact ordered and stably compact spaces}

Let us point out that in the cases under consideration each one of the
three topologies (upper, lower and interval topology) determines the
other two uniquely.

According to L. Nachbin \cite{N}, a compact space $(X,\tau)$ endowed with a
partial order $\leq$ the graph $G_\leq = \{(x,y)\mid
x\leq y\}$ of which is closed in $X\times X$ is called a \emph{compact ordered
space}. Such a space is always Hausdorff, since the diagonal in
$X\times X$ is closed.

To any compact ordered space $(X,\tau,\leq)$ we associate two other
topologies, the lower topology $\tau^{lo}$ and the upper topology
$\tau^{up}$. The closed sets of the upper (resp., lower) topology are the
$\tau$-open upper (resp., lower) sets. Thus, the open sets of the
upper (resp., lower) topology are the $\tau$-open upper (resp., lower)
sets. We will use the following characterization of these two derived
topologies: 

\begin{lemma}\label{lem:compord}
Let $(X,\tau,\leq)$ be a compact ordered space. Suppose that $\tau_1$
(resp., $\tau_2$) are topologies on $X$ that consists of $\tau$-open
upper (resp., lower) sets which are 
separating in the following sense: Whenever $x\not\leq y$, there are disjoint
sets $U\in \tau_1$ and $V\in \tau_2$ such that $x\in U$ and $y\in
V$. Then $\tau_1$ is the upper and $\tau_2$
the lower topology.  
\end{lemma}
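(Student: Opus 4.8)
The plan is to establish the two equalities $\tau_1 = \tau^{up}$ and $\tau_2 = \tau^{lo}$ by proving two inclusions apiece. One inclusion in each case is immediate from the hypotheses: every member of $\tau_1$ is a $\tau$-open upper set, and these are by definition exactly the open sets of $\tau^{up}$, so $\tau_1 \subseteq \tau^{up}$; symmetrically $\tau_2 \subseteq \tau^{lo}$. All the content lies in the reverse inclusions $\tau^{up} \subseteq \tau_1$ and $\tau^{lo} \subseteq \tau_2$, and these two are perfectly symmetric. I would therefore carry out the argument in full for $\tau^{up} \subseteq \tau_1$ and then observe that exchanging the roles of $\tau_1/\tau_2$ and of the order with its opposite delivers the other.

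For the reverse inclusion I would fix a $\tau$-open upper set $W$ (a typical element of $\tau^{up}$) and an arbitrary point $x \in W$, and produce a set $U \in \tau_1$ with $x \in U \subseteq W$; since $x$ ranges over all of $W$, this exhibits $W$ as a union of $\tau_1$-open sets and so places $W$ in $\tau_1$. The crucial order-theoretic observation is that $x \not\le y$ for every $y \in X \setminus W$: because $W$ is an upper set, $x \le y$ together with $x \in W$ would force $y \in W$. This is exactly what is needed to invoke the separating hypothesis, which for each such $y$ supplies disjoint sets $U_y \in \tau_1$ and $V_y \in \tau_2$ with $x \in U_y$ and $y \in V_y$.

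Compactness then enters. The complement $X \setminus W$ is $\tau$-closed, hence $\tau$-compact, and the sets $V_y$ (each $\tau$-open, being a member of $\tau_2$) cover it; extracting a finite subcover $V_{y_1}, \dots, V_{y_n}$ and setting $U = U_{y_1} \cap \cdots \cap U_{y_n}$ gives a $\tau_1$-open neighborhood of $x$. Disjointness of each $U_{y_i}$ from the corresponding $V_{y_i}$ shows that $U$ misses $\bigcup_{i} V_{y_i} \supseteq X \setminus W$, so $U \subseteq W$, as required. I expect the main (and only modest) obstacle to be bookkeeping: getting the direction of the order correct when passing from ``$W$ is an upper set'' to the conclusion $x \not\le y$ — and its mirror, deducing $x \not\le y_0$ from ``$W$ is a lower set'' in the $\tau_2$ argument — together with applying the finite-subcover extraction to the correct family, namely the $V_y$ in the upper case and the $U_x$ in the lower case. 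No further structure of the compact ordered space is used beyond compactness of $X$ and the supplied separation property.
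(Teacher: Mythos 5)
Your proposal is correct and takes essentially the same route as the paper: fix a $\tau$-open upper set $W$ and $x\in W$, use the separation hypothesis against each point of the $\tau$-compact complement $X\setminus W$, extract a finite subcover from the sets $V_y\in\tau_2$, and intersect the corresponding $U_y\in\tau_1$ to obtain a $\tau_1$-neighborhood of $x$ inside $W$. The only difference is presentational: the paper carries out just this one case, leaving the symmetric $\tau_2$ argument and the trivial inclusions $\tau_1\subseteq\tau^{up}$, $\tau_2\subseteq\tau^{lo}$ implicit, all of which you correctly make explicit (including the reversed order bookkeeping and covering by the $\tau_1$-sets in the lower case).
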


\begin{proof}
Let $W$ be an arbitrary $\tau$-open upper set. We have to show that
$W$ belongs to $\tau_1$. For this choose any $x\in W$. It suffices to
show that there is a $U\in \tau_1$ such that $x\in U\subseteq W$
(since then $W$ is the union of open sets belonging to $\tau_1$). Thus
take any $y\not\in U$. Then $x\not\leq y$ and we can find disjoint
sets $U_y\in\tau_1$ and $V_y\in\tau_2$ such that $x\in U_y$ and $y\in
V_y$. The open sets $V_y$ cover the complement of $W$ which is a
closed hence compact set. Thus, finitely many of the $V_y$ cover the
complement of $W$. Take the intersection 
$U$ of the corresponding finitely many $U_y$. Then $x\in U\subseteq W$
and $U\in \tau_1$.
\end{proof}

There is an equivalent way to look at this situation. A topological
space $(X,\omega)$ is called \emph{stably compact} if it is compact,
locally compact, sober and coherent. By \emph{coherent} we mean that
the intersection of any two compact saturated subsets is compact. 

The relation between stably compact spaces and compact ordered spaces
is the following (see, e.g., \cite{AJK} or \cite[Section VI-7]{compend}):

To every stably compact space $(X,\omega)$ we associate a compact
ordered space $(X,\omega^p,\leq_\omega)$ in the following way:
$\leq_\omega$ is the specialization order associated with the topology
$\omega$. The topology $\omega^p$ is the coarsest refinement of the
given topology $\omega$ and the associated co-compact topology
$\omega^{cc}$ the closed sets of which are the $\omega$-compact saturated
subsets of $X$. Moreover, the original topology $\omega$ is the upper
topology associated with $\omega^p$ and the co-compact topology
$\omega^{cc}$ is the lower topology.

Conversely, Let $(X,\tau,\leq)$ be a compact ordered space. Then the
upper topology $\tau^{up}$ is stably compact. Its associated
co-compact topology is the lower topology and $\tau$ is the coarsest
common refinement of the associated upper and lower topologies. The
order $\leq$ agrees with the specialization order associated with the
upper topology.

This setting allows an alternative proof of Proposition \ref{prop:funcspaces}.
We use:

\begin{lemma}\label{lem:retract}
If $X$ is a stably compact space and $Y$ a retract of $X$, that is, if
there are continuous maps $\rho\colon X\to Y$ and $i\colon Y\to X$
such that $\rho\circ i$ is the identity in $Y$, then $Y$ is stably
compact, too.  
\end{lemma}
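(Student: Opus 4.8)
The plan is to verify that each of the four defining properties of stable compactness---compactness, local compactness, sobriety, and coherence---is inherited by $Y$. It is convenient to first note that, since $\rho\circ i=\mathrm{id}_Y$, the section $i$ is a topological embedding: it is injective, and $\rho$ restricted to $i(Y)$ is a continuous inverse of $i\colon Y\to i(Y)$. Hence I may identify $Y$ with the subspace $i(Y)\subseteq X$ and regard $\rho\colon X\to Y$ as a continuous retraction with $\rho|_Y=\mathrm{id}_Y$; in particular $\rho$ is monotone for the specialization preorders, and the specialization preorder of $Y$ is the restriction of that of $X$. Compactness is then immediate: $Y=\rho(X)$ is a continuous image of the compact space $X$.

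For sobriety I would take an irreducible closed set $C\subseteq Y$ and pass to its closure $\overline C$ in $X$, which is again irreducible, hence of the form $\overline{\{x\}}$ for a unique $x\in X$ by sobriety of $X$. The candidate generic point is $\rho(x)$: continuity of $\rho$ gives $\rho(x)\in\rho(\overline C)\subseteq\overline{\rho(C)}=C$ (using $\rho(C)=C$), so $\overline{\{\rho(x)\}}\subseteq C$, while $C=\rho(C)\subseteq\rho(\overline{\{x\}})\subseteq\overline{\{\rho(x)\}}$ gives the reverse inclusion; uniqueness follows because $Y$, as a subspace of the sober, hence $T_0$, space $X$, is itself $T_0$.

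Local compactness is handled by transporting neighborhoods across $\rho$. Given $y\in Y$ and an open $U\ni y$ in $Y$, the set $\rho^{-1}(U)$ is open in $X$ and contains $y$, so local compactness of $X$ yields an open $V$ and a compact saturated $K$ with $y\in V\subseteq K\subseteq\rho^{-1}(U)$. Then $\rho(K)$ is compact, $\rho(K)\subseteq U$, and $V\cap Y$ is an open neighborhood of $y$ contained in $\rho(K)$, since each $z\in V\cap Y$ satisfies $z=\rho(z)\in\rho(K)$; passing to the saturation of $\rho(K)$, which stays inside the saturated set $U$, produces a compact saturated neighborhood of $y$ below $U$.

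The main obstacle is coherence, because intersections of compact saturated sets are not directly transported by a continuous map. My plan is: given compact saturated $A,B\subseteq Y$, form their saturations $\widetilde A=\ua_X A$ and $\widetilde B=\ua_X B$ in $X$. These are compact (the saturation of a compact set is compact, open sets being saturated) and saturated in $X$, so coherence of $X$ makes $\widetilde A\cap\widetilde B$ compact. The key computation is that $\rho(\widetilde A)\subseteq A$: if $x\geq a$ in $X$ for some $a\in A$, then monotonicity gives $\rho(x)\geq\rho(a)=a$, whence $\rho(x)\in A$ by saturation of $A$ in $Y$; likewise $\rho(\widetilde B)\subseteq B$. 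Therefore $\rho(\widetilde A\cap\widetilde B)\subseteq A\cap B$, and since $\rho$ fixes the subset $A\cap B\subseteq\widetilde A\cap\widetilde B$ pointwise, in fact $A\cap B=\rho(\widetilde A\cap\widetilde B)$ is a continuous image of a compact set, hence compact; it is saturated as an intersection of saturated sets. This establishes coherence and completes the verification.
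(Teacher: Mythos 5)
Your proof is correct, and there is nothing in the paper to compare it against in detail: Lemma \ref{lem:retract} is stated there without proof, as a known fact whose background is delegated to the cited literature on stably compact spaces (\cite{AJK}, \cite[Section VI-7]{compend}). Your argument is the standard direct verification from that literature, and every step checks out. The preliminary identification of $Y$ with the subspace $i(Y)$ is legitimate, since the specialization preorder of a subspace is the restriction of that of the ambient space and $\rho$ then fixes $Y$ pointwise; compactness and sobriety transfer along $\rho$ exactly as you say (irreducibility of $C$ is intrinsic, so $\overline{C}$ is irreducible in $X$, and $T_0$-ness of the subspace gives uniqueness of the generic point), and your local compactness argument correctly exploits $V\cap Y\subseteq \rho(K)$, which holds only because $\rho$ restricts to the identity on $Y$, before saturating inside the upper set $U$. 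The genuinely non-obvious point is coherence, since continuous images do not commute with intersections of compact sets; your device of saturating $A$ and $B$ in $X$ and then computing $\rho(\ua_X A)\subseteq A$ from monotonicity of $\rho$ together with saturatedness of $A$ in $Y$ (recall that the saturation of a set is its upward closure in the specialization preorder), so that $A\cap B=\rho(\ua_X A\cap \ua_X B)$ is a continuous image of a compact set, is exactly the key computation. A conceivable alternative route, passing to Nachbin's compact ordered space via the patch topology as the paper does elsewhere in this section, is not immediate here, because a continuous retraction between stably compact spaces need not be patch-continuous; your intrinsic verification of the four defining properties avoids that difficulty and is the right approach.
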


We now let $P$ be a predomain. We recall that
$(\MON(P),\tau,\leq)$ is a compact ordered space. Thus, 
its upper topology $\tau^{up}$ is stably compact. For its intrinsic
upper topology, $\LSC(P)$ is a subspace of $\MON(P)$ and even a retract
under the map $env\colon \MON(P)\to \LSC(P)$ which is continuous for the
upper topologies, since $env$ preserves arbitrary suprema by Lemma
\ref{lem:env}. Thus $\LSC(P)$ is stably compact for its intrinsic upper
topology $\tau^{up}$ by Lemma \ref{lem:retract}. By Lemma \ref{lem:compord} and
Lemma \ref{lem:sepa}, the intrinsic lower topology on $\LSC(P)$ agrees with the
co-compact topology $(\tau^{up})^{cc}$ and, hence,  
the compact Hausdorff topology $(\tau^{up})^p$ agrees with the
intrinsic interval topology of $\LSC(P)$. We summarize:

\begin{proposition}\label{prop:stablycomp}
Let $P$ be a predomain. Then $\LSC(P)$ is stably compact for its upper
topology. The associated co-compact topology is the lower topology and
the associated patch topology is the interval topology.
\end{proposition}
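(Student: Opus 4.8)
The plan is to realize $\LSC(P)$ as a topological retract of the compact ordered space $\MON(P)$, to transport stable compactness along the retraction, and then to pin down the two derived topologies by the separation already isolated in Lemma \ref{lem:sepa}. First I would recall that $(\MON(P),\tau,\leq)$ is a compact ordered space, being a closed subspace of the product $\oRp^P$. By the correspondence between compact ordered and stably compact spaces recalled in this section, its intrinsic upper topology $\tau^{up}$ is stably compact, with associated co-compact topology the lower topology and associated patch topology $\tau$.

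Next I would exhibit $\LSC(P)$ as a retract of $\MON(P)$ for the upper topology. The inclusion $i\colon \LSC(P)\hookrightarrow \MON(P)$ is continuous for the upper topologies, since the intrinsic upper topology of $\LSC(P)$ is precisely the subspace topology. The map $env\colon \MON(P)\to \LSC(P)$ is continuous for the respective upper topologies, because it preserves arbitrary suprema (Lemma \ref{lem:env}) and suprema-preserving maps between complete lattices are continuous for the upper topologies (the lower-adjoint observation in \S\ref{subsec:3.2}). Since $env(f)=f$ whenever $f$ is already lower semicontinuous, we have $env\circ i=\id_{\LSC(P)}$, so $\LSC(P)$ is a retract, and Lemma \ref{lem:retract} yields that $\LSC(P)$ is stably compact for its intrinsic upper topology. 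This is the first assertion.

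It then remains to identify the co-compact and patch topologies of $(\LSC(P),\tau^{up})$ with the intrinsic lower and interval topologies. Being stably compact, $(\LSC(P),\tau^{up})$ carries an associated compact ordered space whose specialization order is the pointwise order $\leq$, whose patch topology is $(\tau^{up})^p$, and whose lower topology is the co-compact topology $(\tau^{up})^{cc}$. I would apply Lemma \ref{lem:compord} to this compact ordered space, taking $\tau_1$ to be the intrinsic upper topology and $\tau_2$ the intrinsic lower topology of $\LSC(P)$: both consist of open upper, respectively open lower, sets, and the separating hypothesis is exactly Lemma \ref{lem:sepa}. Its conclusion identifies the intrinsic lower topology with $(\tau^{up})^{cc}$, and hence the intrinsic interval topology, being the coarsest common refinement of the upper and lower topologies, with the patch topology $(\tau^{up})^p$.

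I expect the last identification to be the main obstacle. Everything preceding it is a formal transport of structure along the retraction, but matching the \emph{concrete} intrinsic lower topology with the \emph{abstract} co-compact topology requires the subbasic sets $W_{y,r}$ of Proposition \ref{prop:funcspaces}(b) to be patch-open lower sets that separate incomparable pairs in the precise sense demanded by Lemma \ref{lem:compord}. This is exactly the content packaged into Lemma \ref{lem:sepa}, so once that lemma is available the argument closes.
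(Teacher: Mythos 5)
Your proposal is correct and follows the paper's own proof essentially step for step: stable compactness of the intrinsic upper topology is transported from the compact ordered space $\MON(P)$ along the retraction $env$ (Lemmas \ref{lem:env} and \ref{lem:retract}), and the co-compact and patch topologies are then identified with the intrinsic lower and interval topologies via Lemma \ref{lem:compord} combined with the separation in Lemma \ref{lem:sepa}. The delicate point you flag at the end is exactly the one the paper also resolves through Lemmas \ref{lem:compord} and \ref{lem:sepa}, so there is nothing genuinely different here.
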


\subsection{The dual $M^*$ of a preCuntz semigroup}

For a predomain monoid $(M,+,0,\ll)$ its \emph{dual} $M^*$ is defined
to be the set of all 
lower semicontinuous monoid homomorphisms $\varphi\colon M\to \oRp$. 
Since the sum $\varphi+\psi$ of monoid homomorphisms $\varphi$ and
$\psi$ and also the scalar multiple $r\varphi$, $0<r<*\infty$ are
monoid homomorphisms, $M^*$ is a subcone of $\LSC(M)$. Since the
pointwise supremum of 
a directed family of lower semicontinuous monoid homomorphisms is again
not only lower semicontinuous but also a monoid homomorphism, $M^*$ is a
dcpo-monoid. But $M^*$ is not a domain. Let us investigate its
topological structure. 

%

As in Section \ref{subsec:3.2} we will use the set $M'$ of all monotone
monoid homomorphisms $\gamma\colon P\to\oRp$. Clearly, $M'$ is a
subcone of the cone $\MON(P)$ of all monotone maps from $P$ to $\oRp$. 

The central observation is:

\begin{lemma}\label{lem:env1}
For a preCuntz semigroup $M$, the lower semicontinuous envelope
$env(\gamma)$ of a monotone monoid 
homomorphism $\gamma\colon M\to\oRp$ is also a monoid homomorphism.
\end{lemma}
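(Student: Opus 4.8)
The plan is to verify the two defining conditions of a monoid homomorphism for $g:=env(\gamma)$, namely $g(0)=0$ and $g(a+b)=g(a)+g(b)$, working throughout from the explicit formula $env(\gamma)(x)=\sup_{y\llcurly x}\gamma(y)$ of Lemma \ref{lem:env} together with the additivity of $\llcurly$. I would first record the elementary fact that $a\llcurly c$ forces $a\leq c$ in the natural preorder: if $d\llcurly a\llcurly c$ then $d\llcurly c$ by transitivity, so $\dda a\subseteq\dda c$. For the neutral element this gives that every $y\llcurly 0$ satisfies $y\leq 0$, hence $\gamma(y)\leq\gamma(0)=0$; since $0\llcurly 0$ (take $a=0$ in $0\llcurly a$), the supremum defining $env(\gamma)(0)$ ranges over a nonempty set all of whose entries vanish, so $env(\gamma)(0)=0$.

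For additivity I would treat the two inequalities separately, the superadditive one being routine. The families $\{\gamma(x)\mid x\llcurly a\}$ and $\{\gamma(y)\mid y\llcurly b\}$ are up-directed in $\oRp$, because $\dda a$ and $\dda b$ are $\llcurly$-directed and $\gamma$ is monotone; since addition on $\oRp$ preserves directed suprema one may separate the suprema and write $env(\gamma)(a)+env(\gamma)(b)=\sup_{x\llcurly a,\,y\llcurly b}(\gamma(x)+\gamma(y))$. As $\gamma$ is a homomorphism this equals $\sup_{x\llcurly a,\,y\llcurly b}\gamma(x+y)$, and additivity of $\llcurly$ yields $x+y\llcurly a+b$ for all such $x,y$, so every term is dominated by $env(\gamma)(a+b)=\sup_{z\llcurly a+b}\gamma(z)$. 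This gives $env(\gamma)(a)+env(\gamma)(b)\leq env(\gamma)(a+b)$.

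The main obstacle is the reverse, subadditive inequality $env(\gamma)(a+b)\leq env(\gamma)(a)+env(\gamma)(b)$, since one must decompose an element way-below $a+b$ into pieces way-below $a$ and $b$. The clean route is to use the description of addition on the round ideal completion: because the canonical map $a\mapsto\dda a$ is a monoid homomorphism and $I+J=\bigcup_{x\in I,\,y\in J}\dda(x+y)$, one has $\dda(a+b)=\dda a+\dda b=\bigcup_{x\llcurly a,\,y\llcurly b}\dda(x+y)$. Thus every $z\llcurly a+b$ already satisfies $z\llcurly x+y$ for some $x\llcurly a$, $y\llcurly b$; in particular $z\leq x+y$, so $\gamma(z)\leq\gamma(x+y)=\gamma(x)+\gamma(y)\leq env(\gamma)(a)+env(\gamma)(b)$, and taking the supremum over $z\llcurly a+b$ finishes the proof. (Alternatively, the same decomposition is available topologically: addition is jointly continuous on the c-space $M$ by Proposition \ref{prop:joint}, so $\dua z$ being a neighborhood of $a+b$ yields basic neighborhoods $\dua x\ni a$ and $\dua y\ni b$ with $\dua x+\dua y\subseteq\dua z$, and interpolating $x\llcurly x'\llcurly a$, $y\llcurly y'\llcurly b$ produces $x'\llcurly a$, $y'\llcurly b$ with $z\llcurly x'+y'$.) The only points needing care are the separation of suprema in $\oRp$ and the implication $\llcurly\Rightarrow\leq$, both of which are immediate.
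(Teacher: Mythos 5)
Your proof is correct and takes essentially the same route as the paper: superadditivity comes from the envelope formula $env(\gamma)(x)=\sup_{y\llcurly x}\gamma(y)$ together with additivity of $\llcurly$, and the reverse inequality comes from decomposing $z\llcurly a+b$ as $z\llcurly x'+y'$ with $x'\llcurly a$, $y'\llcurly b$, which the paper likewise deduces from the continuity of addition. Your main route through $\dda(a+b)=\dda a+\dda b$ in the round ideal completion is merely a repackaging of that same continuity argument, which your parenthetical (joint continuity via Proposition \ref{prop:joint}, basic neighborhoods, interpolation) spells out in exactly the paper's way, and in somewhat more detail than the paper itself.
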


\begin{proof}
Given a monotone monoid homomorphism $\gamma$, recall that
$env(\gamma)(x) = \sup_{x'\llcurly x}\gamma(x')$. Thus, clearly
$env(\gamma)(0) = 0$. In order to show additivity, let $x,y\in
M$. Then $env(\gamma)(x) + env(\gamma)(y) =  \sup_{x'\llcurly
  x}\gamma(x') + \sup_{y'\llcurly y}\gamma(y') = \sup_{x'\llcurly
  x,y'\llcurly y} \gamma(x') +\gamma(y') = \sup_{x'\llcurly
  x,y'\llcurly y} \gamma(x'+y') \leq \sup_{z\llcurly x+y}\gamma(z) =
env(\gamma)(x+y)$, where we have used that the relation $\llcurly$ is
additive in $M$ for the inequality in the chain of equalities
above. The reverse inequality follows from the continuity of addition
in $M$ which implies that, if $z\llcurly x+ y$, then there are
$x'\llcurly x$ and $y'\llcurly y$ such that $z\leq x'+y'$. Thus
$\sup_{x'\llcurly x,y'\llcurly y} \gamma(x'+y') \geq \sup_{z\llcurly
  x+y}\gamma(z)$.  
This allows to read the above chain of equalities in the
with $\leq$ replaced by $\geq$.
\end{proof}

\newarrow{TeXonto} ----{->>}
\newarrow{Into} C---> 

Thus $env$ maps $M'$ onto $M^*$ and we have the following situation
where all the arrows denote linear maps:
\begin{diagram}
\oRp^M&\lInto&\MON(M) & \lInto &M'\\
               &&\dTeXonto<{env}\uInto&       &\dTeXonto{env}\uInto\\
               &&\LSC(M) & \lInto &{M^*}        
\end{diagram}  

We consider the restrictions to $M^*$ of our three topologies
on $\LSC(M)$:  
\begin{verse}
The \emph{weak$^*$upper topology} $\tau_{up}^*$, the restriction of the upper
topology on $\LSC(P)$. It is the weakest topology for which all the
point evaluations  
$\delta_x\colon \varphi\mapsto\varphi(x)\colon M^*\to\oRp$ are
lower semicontinuous, 

the restriction $\tau^*_{lo}$ to $M^*$ of the lower topology
$\tau_{lo}$ on $\LSC(M)$,

the restriction $\tau^*_{iv}$ to $M^*$ of the interval topology $\tau_{iv}$.
\end{verse}
We now are ready for our main result:

\begin{theorem}\label{th:dualcone}
Let $M$ be a preCuntz semigroup and $M^*$ its dual cone.

(a) For the topology $\tau^*_{iv}$ and the pointwise order $\leq$,
$M^*$ is a compact ordered topological cone.

(b) For the weak$^*$upper topology $\tau^*_{up}$, and similarly for the
topology $\tau^*_{lo}$, $M^*$ is a stably compact topological cone.
\end{theorem}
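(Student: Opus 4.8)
The plan is to avoid working with $M^*$ directly and instead transfer all compactness from the larger, better-behaved space $\MON(M)$ of monotone monoid homomorphisms through the retraction $env\colon M'\to M^*$ supplied by Lemma \ref{lem:env1}. Concretely, I would first establish that the subcone $M'\subseteq\MON(M)$ is compact ordered for the interval topology, deduce from the Nachbin correspondence recalled before Lemma \ref{lem:compord} that both its upper and its lower topology are stably compact, and then carry these properties down to $M^*$ using that $M^*$ is a topological retract of $M'$ for each of the three topologies. The cone structure will come for free from Proposition \ref{prop:funccones}.

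First I would show that $M'$ is closed in $\MON(M)$ for the interval topology. On $\MON(M)$ this topology is the restriction of the product interval topology on $\oRp^M$, which is compact Hausdorff; in particular every evaluation $\delta_x\colon\gamma\mapsto\gamma(x)$ is continuous into $\oRp$ with its (Hausdorff) interval topology, and addition on $\oRp$ is jointly continuous for that topology. Hence, for fixed $x,y$, the set $\{\gamma\mid\gamma(x+y)=\gamma(x)+\gamma(y)\}$ is an equalizer of two continuous maps into a Hausdorff space, so it is closed, and likewise $\{\gamma\mid\gamma(0)=0\}$ is closed. Intersecting these closed sets shows that $M'$ is closed in the compact ordered space $\MON(M)$, so $M'$ is itself compact ordered for the restricted interval topology. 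By Nachbin duality the upper topology of $M'$ is then stably compact, and applying the same correspondence to the opposite order shows that its lower topology is stably compact as well.

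Next I would verify that, for each of the three topologies, $M^*$ is a retract of $M'$ with respect to the inclusion $\iota\colon M^*\hookrightarrow M'$ and the map $env$. That $env\colon M'\to M^*$ is well defined and continuous for the upper, lower and interval topologies is exactly Lemma \ref{lem:env1} together with Proposition \ref{prop:funccones}, and $env\circ\iota=\id_{M^*}$ because $env$ fixes lower semicontinuous functions. The point needing care is the continuity of $\iota$: the weak$^*$upper topology $\tau^*_{up}$ and the upper topology of $M'$ are both restrictions of the product upper topology on $\oRp^M$, so $\iota$ is continuous (indeed an embedding) for the upper topologies; for the lower topologies the intrinsic lower topology of $\LSC(M)$, hence $\tau^*_{lo}$, is finer than the product lower topology restricted to $M^*$, which is precisely the lower topology inherited from $M'$, so $\iota$ is again continuous; and continuity for the interval topology follows since it is the join of the two. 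Because a continuous section of a continuous retraction is a topological embedding (its inverse on the image is a restriction of $env$), $\iota$ identifies $(M^*,\tau^*_{up})$, $(M^*,\tau^*_{lo})$ and $(M^*,\tau^*_{iv})$ with retracts of $M'$.

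Finally I would assemble the conclusions. Applying Lemma \ref{lem:retract} to the stably compact upper and lower topologies of $M'$ yields that $M^*$ is stably compact for $\tau^*_{up}$ and for $\tau^*_{lo}$, which is part (b); this parallels the proof of Proposition \ref{prop:stablycomp}. For part (a), $(M^*,\tau^*_{iv})$ is a retract of the compact Hausdorff space $M'$, hence a closed subspace of it, hence itself compact Hausdorff; equipped with the pointwise order, whose graph is the restriction of the closed graph on $M'$, it is a compact ordered space. In both parts the cone operations are the restrictions of those on $\LSC(M)$, which is a topological cone for each of the three topologies by Proposition \ref{prop:funccones}, so they remain jointly continuous on the subcone $M^*$. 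The main obstacle, and the only place where real care is needed, is the topology bookkeeping of the third paragraph: since the intrinsic lower topology on $\LSC(M)$ can be strictly finer than the one inherited from $\oRp^M$, one must track the directions of the inclusions between the topologies carefully to confirm that $\iota$ is genuinely continuous and that $env$ is genuinely a retraction for the lower topology.
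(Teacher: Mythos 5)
Your proof is correct, and its first half coincides with the paper's: both arguments show that $M'$ is closed in $\MON(M)$ for the interval topology (your equalizer formulation and the paper's pointwise net-convergence argument are the same computation), conclude that $M'$ is a compact ordered cone, and then push compactness down to $M^*$ along $env$ using Lemma \ref{lem:env1} and Proposition \ref{prop:funccones}, with the cone operations inherited from $\LSC(M)$. Where you genuinely diverge is part (b). The paper stops once $(M^*,\tau^*_{iv},\leq)$ is known to be compact ordered and closed in $\LSC(M)$: it identifies $\tau^*_{up}$ and $\tau^*_{lo}$ as the topologies of open upper and open lower sets of $\tau^*_{iv}$ (this rests on the separation Lemma \ref{lem:sepa} together with Lemma \ref{lem:compord}, packaged into Proposition \ref{prop:stablycomp}) and then invokes the Nachbin correspondence between compact ordered spaces and stably compact spaces applied to $M^*$ itself. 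You instead apply Nachbin to $M'$ first and transfer stable compactness to $M^*$ via the retraction Lemma \ref{lem:retract}, exhibiting $(M^*,\tau^*_{up})$ and $(M^*,\tau^*_{lo})$ as retracts of $M'$ for each topology separately; this replays the paper's own proof of Proposition \ref{prop:stablycomp} one level down, and the delicate point you flag --- that the intrinsic lower topology of $\LSC(M)$ is finer than the pointwise one, which is precisely what makes the inclusion $\iota$ continuous in the needed direction --- is handled correctly. The one step you elide is the identification of the pointwise (product-subspace) upper and lower topologies on $M'$ with \emph{the} Nachbin upper and lower topologies of the compact ordered space $M'$; this is not automatic for a subcone and itself requires Lemma \ref{lem:compord}, applied with the separating subbasic sets $\{\gamma\mid \gamma(x)>r\}$ and $\{\gamma\mid \gamma(x)<r\}$, though the verification is routine. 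In terms of trade-offs: the paper's route is shorter because the retract technique was exploited once and for all in Proposition \ref{prop:stablycomp}, whereas your route is more self-contained and makes explicit that all three inclusions of $M^*$ into $M'$ are topological embeddings (a continuous section of a continuous retraction), which is information the paper's proof leaves implicit.
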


\begin{proof}
Lt us show that $M'$ is closed in $\MON(M)$ for the interval
topology. Convergence for the interval topology in $\MON(P)$ is
pointwise convergence in $\oRp$. Thus if $\gamma_i$ is a net in $M'$
that converges to some $\gamma\in \MON(M)$, then for $x,y\in M$,
$\gamma_i(x)$ converges to $\gamma(x)$, $\gamma_i(y)$ converges to
$\gamma(y)$  and $\gamma_i(x+y)$ converges to $\gamma(x+y)$. At the
other hand, $\gamma_i(x+y)= \gamma_i(x)+\gamma_i(y)$ converges to
$\gamma(x)+\gamma(y)$ by the continuity of addition on $\oRp$. Thus
$\gamma(x)+\gamma(y)=\gamma(x+y)$.

As a closed subcone of $\MON(M)$, $M'$ is a compact ordered cone for
the interval topology. Forming the lower semicontinuous envelope maps
$M'$ onto $M^*$ by Lemma \ref{lem:env1}. By Proposition
\ref{prop:funccones}, the map $env$ is continuous for the respective
interval topologies. Hence, $M^*$ is also compact for the topology
$\tau^*_{iv}$ hence a compact ordered space, and closed
in $\LSC(M)$ for the interval topology. We infer that
$(M^*,\tau_{iv}^*)$ is a compact ordered topological cone. 

$M^*$ is also a topological cone for the weak$^*$upper topology
$\tau_{up}^*$ and the topology $\tau_{lo}^*$ which are stably compact       
according to Proposition \ref{prop:stablycomp}, being the topologies
of open upper and lower sets, respectively, for the topology
$\tau_{iv}^*$. 
\end{proof}
From Proposition \ref{prop:funcspaces} we also deduce:

(a) A subbasis for the weak$^*$upper topology $\tau_{up}^*$ of
$M^*$ is given by:   
\[V_{x,r} = \{f\in M^*\mid f(x) > r\},\ x\in M, r\in\Rp\]
 A subbasis for the topology $\tau_{lo}^*$ by:
\[W_{y,r} = \{f\in M^*\mid f(x) < r \mbox{ for some } x\in \dua
y\},\ y\in M, r\in \Rp.\]
Together these subbases constitute a subbasis for the topology $\tau_{iv}^*$.

(b) A net $(f_i)_i$ of functions in $M^*$ converges to $f$  
for the weak$^*$upper upper topology $\tau_{up}^*$ if and only if: 
\[f(x) \leq \liminf_i f_i(x) \mbox{ for all } x\in M \eqno{\rm (upConv)}\]  
for the topology $\tau_{lo}^*$  if and only if: 
\[\limsup_i f_i(y) \leq f(x)\ \mbox{ whenever } y\llcurly x\mbox{ in } M
\eqno{\rm (loConv)}\] 
for the topology $\tau_{iv}^*$ if and only
conditions {\rm (upConv)} and {\rm (upConv)} hold.  

These result hold in particular for the dual of our basic example, the
preCuntz semigroup $C_0(X)_+$. 

\begin{remark}{\rm
The main proof technique for the results in this subsection consists
in considering first the cone $M'$ of order preserving linear functionals
$\lambda\colon M\to\oRp$; for those the compactness properties follow
from the Tychonoff Theorem on the compactness of product
spaces. Taking the lower semicontinuous envelope yields a continuous retraction
on the the lower semicontinuous monoid homomorphisms. This technique
has first been applied by Jung \cite{AJK} and is heavily used in
\cite{Plo,alaoglu}. In \cite{ERS} it is mentioned that in the proof of
Theorem 3.7 on the compactness of the space of traces the same idea
has been communicated to the authors by E. Kirchberg. In \cite[Theorem
4.8]{ERS} claim (a) of  
Theorem \ref{th:dualcone} has been proved for Cuntz semigroups.}
\end{remark}

\subsection{The bidual $M^{**}$}

Let $M$ be a preCuntz semigroup and $M^*$ its dual. By the universal
property of the round ideal completion (see \ref{prop:universal}), the
dual  $\cRI(M)^*$ of $\cRI(M)$ is canonically isomorphic (algebraically
and topologically) to the dual $M^*$ of $M$ (and also to the dual of
the round $\omega$-ideal completion of $M$ if $M$ is first countable.   

We may form the bidual $M^{**}$, the cone of all linear functionals 
$\Lambda\colon M^*\to\oRp$ that are lower semicontinuous with respect
to the weak$^*$upper topology $\tau^*_{up}$; this is equivalent to requiring
that these maps are monotone and lower semicontinuous with respect
to the patch topology $\tau^*_p$; indeed, by Proposition
\ref{prop:universal1} the patch open upper sets agree with the
weak$^*$upper open sets. We endow $M^{**}$ with the
pointwise order, addition and multiplication by scalars $r>0$. We note 
that $M^{**}$ is directed complete (under pointwise suprema). There is a 
natural map from $M$ into its bidual $M^{**}$: to very $x\in M$
we assign the point evaluation $\widehat x\colon \varphi\mapsto
\varphi(x)$. This map from $M$ to $M^{**}$ clearly is a monoid
homomorphism,  linear and monotone. We would like this map to be an
order embedding, that is, $x\not\leq y$ in $M$ implies   $\widehat
x\not\leq \widehat y$. For this it suffices to have the following
separation property:


\begin{separation}
Whenever $x\not\leq y$ in $M$, there is a $\varphi\in M^*$ such that
$\varphi(x)> \varphi(y)$. 
\end{separation} 

This separation property will not be true for Cuntz semigroups in
general. We provide a proof under the hypothesis that 
$M$ is a preCuntz cone: 

\begin{lemma}\label{lem:sep}
Whenever $x\not\leq y$ in a preCuntz cone $M$, there is a $\varphi\in
M^*$ such that $\varphi(x)> \varphi(y)$. 
\end{lemma}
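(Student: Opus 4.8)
The plan is to separate by first producing a \emph{merely monotone} linear functional and then passing to its lower semicontinuous envelope, following the technique recorded after Theorem~\ref{th:dualcone}. It is convenient to move to the round ideal completion $D=\cRI(M)$, which by Proposition~\ref{prop:ricompletion} is a Cuntz cone (a domain cone with additive $\ll$), and to use the canonical isomorphism $\cRI(M)^*\cong M^*$ together with the fact that $i\colon a\mapsto\dda a$ is an order embedding for the natural preorder. Writing $X=\dda x$ and $Y=\dda y$, the hypothesis $x\not\leq y$ becomes $X\not\leq Y$ in $D$, and it then suffices to build $\Phi\in D^*$ with $\Phi(X)>\Phi(Y)$, since $\varphi=\Phi\circ i$ will lie in $M^*$ and satisfy $\varphi(x)=\Phi(X)>\Phi(Y)=\varphi(y)$.

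Since $D$ is a domain, $X=\sup\{V\mid V\ll X\}$, so $X\not\leq Y$ forces some $V\ll X$ with $V\not\leq Y$. Exploiting the cone structure I would then form the gauge $p(Z)=\sup\{r>0\mid rV\ll Z\}$. This $p$ is monotone, positively homogeneous (because $r\mapsto rZ$ preserves $\ll$, so $rV\ll sZ\iff (r/s)V\ll Z$), superadditive (by additivity of $\ll$, from $rV\ll Z$ and $sV\ll W$ one gets $(r+s)V\ll Z+W$), and lower semicontinuous (as $\{Z\mid p(Z)>t\}=\bigcup_{r>t}\dua(rV)$ is open). Moreover $p(X)\geq 1$ because $V\ll X$, while $p(Y)<1$: any $r\geq 1$ with $rV\ll Y$ would give $V\leq rV\leq Y$ (using $V\leq rV$, which follows from $a\leq a+b$, and property (1)), contradicting $V\not\leq Y$; and a sequence $r_n\uparrow 1$ with $r_nV\ll Y$ would yield the directed supremum $V=\sup_n r_nV\leq Y$, again a contradiction. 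Here the passage to $D$ is essential, since this last supremum need not exist in $M$ itself.

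The heart of the argument is then a Hahn--Banach step for cones: the superlinear functional $p$ should admit a monotone linear minorant $\gamma\leq p$ supporting it at $V$, i.e.\ with $\gamma(V)\geq 1$. Granting such a monotone monoid homomorphism $\gamma\colon D\to\oRp$, monotonicity gives $\gamma(X)\geq\gamma(V)\geq 1$ (as $V\leq X$) and $\gamma(Y)\leq p(Y)<1$. Finally I would set $\Phi=env(\gamma)$; by Lemma~\ref{lem:env1} this is a lower semicontinuous monoid homomorphism, hence $\Phi\in D^*$, and it still separates, for $\Phi(Y)\leq\gamma(Y)<1$ while $\Phi(X)=\sup_{X'\ll X}\gamma(X')\geq\gamma(V)\geq 1$ since $V\ll X$. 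Transporting back through the isomorphism yields the desired $\varphi\in M^*$.

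The main obstacle is precisely the Hahn--Banach step: extracting an exactly additive (linear) functional from the only superadditive gauge $p$, in a setting with no subtraction and values in $\oRp$. This is where the cone hypothesis is indispensable and where one must invoke, or prove inline by a Zorn-type maximality argument on monotone linear minorants, a sandwich/extension theorem for ordered cones; the subsequent recovery of lower semicontinuity via the envelope is then routine given Lemma~\ref{lem:env1}.
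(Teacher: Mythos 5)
Your reduction to $D=\cRI(M)$, the gauge $p(Z)=\sup\{r>0\mid rV\ll Z\}$ with its verified properties (monotone, homogeneous, superadditive, lower semicontinuous), and the estimates $p(X)\geq 1>p(Y)$ are all sound; in particular your use of $V=\sup_{r<1}rV$ in $D$ is exactly the continuity of scalar multiplication that the paper also exploits in the lemma following this one. But the step you yourself flag as the heart --- a monotone linear $\gamma\leq p$ with $\gamma(V)\geq 1$ --- is not merely unproven: as formulated it is unsatisfiable in general, because $\gamma\leq p$ and $\gamma(V)\geq 1$ force $p(V)\geq 1$, i.e.\ $rV\ll V$ for all $r<1$, and this typically fails. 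Concretely, in $D=\LSC([0,1])=\cRI(C_0([0,1])_+)$ take $V=\frac12\chi_{(0,1)}$: the relation $rV\ll V$ would require a continuous $f$ with $rV\leq f\leq (V-\varepsilon)_+$, forcing $f(0)=0$ while $f\geq \frac r2$ on $(0,1)$, which is impossible; hence $p(V)=0$. Yet this $V$ is a legitimate witness for your construction, e.g.\ for $X=\mathbf 1\not\leq Y=\frac14\mathbf 1$ one has $V\ll X$ and $V\not\leq Y$. The defect is repairable --- interpolate $V\ll W\ll X$ and demand support at $W$ instead, where $p(W)\geq 1$ holds automatically since $rV\leq V\ll W$ for $r\leq 1$ --- but your proposal does not do this, and your final estimate $\Phi(X)\geq\gamma(V)\geq 1$ rests on the broken choice of support point.

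Even after that repair, the Hahn--Banach step remains a genuine gap. A ``Zorn-type maximality argument on monotone linear minorants'' is not routine here: the known sandwich theorems for cones, including those in \cite{K}, squeeze a linear functional between a superlinear minorant and a \emph{sublinear majorant}; with only the superlinear $p$ in hand, Zorn gives no mechanism forcing additivity of a maximal minorant, nor any guarantee that such a minorant attains a value $\geq 1$ at $W$ rather than being identically $0$. The paper short-circuits precisely this point by a different packaging: from $z\llcurly z'\llcurly x$ with $z'\not\leq y$ it builds an interpolated chain $x\gg x_1\gg x_2\gg\cdots\gg z'$, checks that $U=\bigcup_n\dua x_n$ is an open \emph{convex} neighborhood of $x$ missing $y$ (convexity coming from additivity of $\llcurly$, the same mechanism as your superadditivity of $p$), and then invokes the separation theorem \cite[Corollary 9.2]{K} for open convex sets in semitopological cones, which directly delivers a \emph{lower semicontinuous} linear $\varphi$ with $\varphi(y)<1<\varphi(u)$ for $u\in U$ --- so no envelope step via Lemma \ref{lem:env1} is needed at all. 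Note that your superlevel sets $\{Z\mid p(Z)>t\}=\bigcup_{r>t}\dua(rV)$ are open convex sets of exactly this kind, so the honest completion of your argument is to apply that same theorem to $U=\{Z\mid p(Z)>1-\delta\}$ with $p(Y)<1-\delta$; at that point the gauge and the envelope become detours and you land essentially on the paper's proof. As it stands, the proposal sets the stage correctly but is missing its load-bearing separation theorem.
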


\begin{proof}
Consider elements $x\not\leq y$. Then $\dda x\not\subseteq \dda y$,
that is, there is an element $z\ll x$ with
$z\not\ll y$. By interpolation we find an element $z'$ with $z\ll
z'\ll x$. Then $\dda z'\not\subseteq \dda y$, that is, $z'\not\leq y$.
Using interpolation we recursively find a sequence $x\gg x_1\gg x_2\gg
\dots \gg z'$. The set $U$ of all $u\in M$ such that $u\gg x_n$ for
some $n$ is a $\tau_\ll$-open neighborhood of $x$ contained in $\dua
z'$ whence 
$y\not\in U$. Moreover, $U$ is convex. Indeed, for elements $u,v\in U$
there is an $n$ such that $u,v\gg x_n$. It follows for every $r$ in
the open unit interval, $ru+(1-r)v\gg rx_n + (1-r)x_n = x_n$, that is
$ru+(1-r)v\in U$. We now can apply \cite[Corollary 9.2]{K} which tells
us that for every open convex set U in a semitopological cone and
every element $y$ not contained in U, there is a lower semicontinuous
linear functional $\varphi$ such that $\varphi(y)<1$ but
$\varphi(u)>1$ for all $u\in U$, in particular, $\varphi(x)>\varphi(y)$. 
\end{proof}

For every round ideal $I$ of $M$, let $\widehat I = \sup \{\widehat
x\mid x\in I\}$. Clearly, $\widehat I\in M^{**}$. Thus, we obtain a
map from $\cRI(M)$ to $M^{**}$ which is Scott-continuous.  Moreover
this map preserves $\ll$: 

\begin{lemma}
For round ideals $I$ and $J$ in a Cuntz cone, $I\ll J$ implies
$\widehat I \ll \widehat J$. 
\end{lemma}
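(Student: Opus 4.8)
The plan is to unwind $I\ll J$ into an approximation between two genuine elements of the cone, to manufacture a strict scalar gap between those elements, and then to upgrade a pointwise domination to a uniform one by compactness of $M^*$.

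First I would reduce to proving $\widehat b\ll \widehat{b'}$ for suitable $b\ll b'$. By Proposition \ref{prop:ricompletion} the hypothesis $I\ll J$ gives a $b\in J$ with $I\subseteq \dda b$, and since $J$ is a round ideal, roundness furnishes a further $b'\in J$ with $b\ll b'$. Every $x\in I$ satisfies $x\ll b$, hence $x\le b$, and as each $\varphi\in M^*$ is monotone we get $\widehat x\le\widehat b$, so $\widehat I=\sup_{x\in I}\widehat x\le\widehat b$; likewise $b'\in J$ gives $\widehat{b'}\le\widehat J$. Since $M^{**}$ is a dcpo ordered pointwise, properties (2) and (3) of $\ll$ apply, and from $\widehat I\le\widehat b\ll\widehat{b'}\le\widehat J$ one concludes $\widehat I\ll\widehat J$. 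So everything reduces to $\widehat b\ll\widehat{b'}$.

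Next I would extract a strict gap. Continuity of scalar multiplication makes $r\mapsto rb'$ preserve the directed supremum $\sup_{0<r<1}r=1$, so $b'=\sup_{0<r<1}rb'$ is a directed supremum; as $b\ll b'$, some $r_0<1$ already satisfies $b\le r_0b'$. By Observation \ref{obs} each $\varphi\in M^*$ is linear, so $\widehat b(\varphi)=\varphi(b)\le r_0\varphi(b')=r_0\widehat{b'}(\varphi)$ for all $\varphi$; that is, $\widehat b\le r_0\widehat{b'}$ with $r_0<1$. This factor is exactly what lets a pointwise domination become uniform. Now, given a directed family $(\Lambda_k)$ in $M^{**}$ with $\widehat{b'}\le\Lambda:=\sup_k\Lambda_k$, I must produce a single $k$ with $\widehat b\le\Lambda_k$. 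I would work on the compact Hausdorff space $(M^*,\tau_{iv}^*)$ of Theorem \ref{th:dualcone}: wherever $\widehat{b'}(\varphi)$ is finite and nonzero, $\widehat b(\varphi)\le r_0\widehat{b'}(\varphi)<\widehat{b'}(\varphi)\le\sup_k\Lambda_k(\varphi)$ strictly, so the increasing family $\Lambda_k$ eventually overtakes $\widehat b$ at $\varphi$; a finite-subcover argument over the compact $M^*$ together with directedness of $(\Lambda_k)$ should then yield one index valid at every $\varphi$.

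The hard part, and the main obstacle, is making this last step rigorous. The functionals $\Lambda_k$ and the point evaluations $\widehat b=\delta_b$ are only \emph{lower} semicontinuous for the weak$^*$upper topology $\tau_{up}^*$, so the naive sets $\{\varphi:\Lambda_k(\varphi)>\widehat b(\varphi)\}$ need not be open and $M^*$ cannot be covered directly. I would circumvent this by passing to the interval topology and using the convergence criteria (upConv) and (loConv) of Proposition \ref{prop:funcspaces} on the compact patch space, together with the envelope map $env$ and its lower adjoint from Lemma \ref{lem:env} and Proposition \ref{prop:funccones}, so as to replace $\widehat b$ by an upper semicontinuous comparand for which the relevant sets are open; the strict factor $r_0<1$ then survives passage to limits of nets in $M^*$ and rules out the failure of $\widehat b\le\Lambda_k$ for all $k$. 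The degenerate values $\widehat{b'}(\varphi)\in\{0,+\infty\}$, which lie outside the strict inequality, must be treated separately, which is routine but should not be overlooked.
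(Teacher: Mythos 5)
Your first half is sound and coincides with the paper's opening moves: reducing $I\ll J$ to a single pair $b\llcurly b'$ in $J$ with $I\subseteq\dda b$, and extracting a scalar gap $r_0<1$, is exactly how the paper begins. One caveat already here: the paper arranges the stronger relation $b\llcurly r_0b'$ (possible in a Cuntz cone, i.e.\ a domain, where $b\ll b'=\sup_{r<1}rb'$ yields $b\ll r_0b'$ for some $r_0<1$), not merely $b\leq r_0b'$. You will need that strengthening, because the only tool for estimating limits along $\tau_{lo}^*$-convergent nets is the criterion (loConv), which bounds $\limsup_i\varphi_i(z)$ by $\varphi(w)$ only when $z\llcurly w$; the pointwise inequality $\widehat b\leq r_0\widehat{b'}$ alone does not ``survive passage to limits'' as you assert.

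The genuine gap is the final compactness step, which is the heart of the lemma and which you leave as a plan that, as stated, fails. You propose to cover all of $M^*$ by sets $\{\varphi\mid\Lambda_k(\varphi)>u(\varphi)\}$ for an upper semicontinuous majorant $u$ of $\widehat b$. Two concrete obstructions sit precisely at the ``degenerate values'' you dismiss as routine: (i) at $\varphi=0$ one has $\Lambda_k(0)=0$ for every $k$ while $u(0)\geq\widehat b(0)=0$, so no member of your family contains $0$; (ii) at $\varphi$ with $\varphi(b')=+\infty$ one may have $\widehat b(\varphi)=+\infty$ while each individual $\Lambda_k(\varphi)$ is finite --- pointwise, nothing forces a single $\Lambda_k$ to take the value $+\infty$ there, so again such $\varphi$ is covered by no open set of your family. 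The missing idea, and the paper's actual device, is a homogeneity normalization that eliminates pointwise comparison of functions altogether: since all functionals involved are homogeneous, $\widehat b\leq\Lambda$ is equivalent to $\Lambda\geq 1$ on the $\tau_{up}^*$-open set $U_b=\{\varphi\mid\varphi(b)>1\}$ (scaling $\varphi$ disposes of both $\varphi(b)=0$ and $\varphi(b)=+\infty$). Using $b\llcurly r_0b'$ and (loConv), every $\tau_{lo}^*$-limit $\varphi$ of a net in $U_b$ satisfies $\varphi(b')\geq 1/r_0>1$; hence the $\tau_{lo}^*$-closure $K$ of $U_b$ is contained in $U_{b'}=\{\varphi\mid\varphi(b')>1\}$ and, by Theorem \ref{th:dualcone} together with Proposition \ref{prop:stablycomp}, $K$ is compact saturated for $\tau_{up}^*$. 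If $\widehat{b'}\leq\sup_k\Lambda_k$, the sets $\{\varphi\mid\Lambda_k(\varphi)>1\}$ --- which, unlike your sets, are genuinely $\tau_{up}^*$-open, being sublevel comparisons against a \emph{constant} --- form a directed open cover of $U_{b'}\supseteq K$, and compactness plus directedness yields one index $k_0$ with $\Lambda_{k_0}>1$ on $U_b$, whence $\widehat b\leq\Lambda_{k_0}$. Your instinct to exploit compactness and the strict factor $r_0<1$ is correct, but without the reduction to the constant threshold $1$ on $U_b$ the covering argument cannot be closed.
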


\begin{proof}
We first consider elements $x\llcurly y$ in $M$. As $y =\sup_{r<1}ry$, there
is an $r<1$ such that $x\llcurly ry$. Let $U_x$ be the set of all
$\varphi\in M^*$ such that $\varphi(x)>1$, and similarly for
$U_{ry}$ and $U_y$. By definition, $U_x$, $U_{ry}$ and $U_y$ are
weak$^*$upper open and  
$U_x\subseteq U_{ry}\subset U_y$. We want to show that there is a
compact saturated set $K$ such that $U_x\subseteq K\subseteq
U_y$. Indeed, let  $\varphi_i$ be a net in $U_x$ converging to some
$\varphi$ for the topology $\tau_{lo}^*$. Let us show that $\varphi\in
U_y$. Indeed, $\varphi(ry) \geq \limsup_i\varphi_i(z)$ for all $z\ll
ry$, in particular, $\varphi(ry) \geq limsup_i \varphi_i(x) \geq
1$. Thus $r\varphi(y)\geq 1$ whence $\varphi(y) =\frac{1}{r} 
>1$, that is $\varphi\in U_y)$. From this we conclude that $\widehat x \ll
\widehat y$ in $M^{**}$. The claim for ideals is a direct consequence,
for if $I\ll J$ there are elements $x\llcurly y$ in $J$ such that
$I\subseteq \dda x$.  
\end{proof}

The following question arises:

\begin{question}\label{qu:bidual}
If $M$ is a preCuntz cone, is $M^{**}$ isomorphic to the round
ideal completion $\cRI(M)$? More precisely, given any $\Lambda\in M^{**}$,
is there a round ideal $J$ in $M$ such that $\Lambda=\widehat J$. 
\end{question}

The answer to this question is 'yes' in the case of our basic example, the cone
$C_0(X)_+$ for a locally compact Hausdorff space $X$: In this case,
$(C_0(X)_+)^{**}$ is naturally isomorphic to
the cone $\LSC(X)$ of all lower semicontinuous functions 
 $f\colon X\to\oRp$, which is the round ideal completion of
 $C_0(X)_+$ according. 
 Indeed, $(C_0(X)_+)^*$ corresponds to the cone of all continuous
 valuations (a topological variant of measures) on $X$ and the 
 claim is a special case of the Schr\"oder-Simpson Theorem (see
 \cite[Theorem 2.15]{K1} or \cite{GL} for a short proof).

In the search for an affirmatively answer to the question above for a
preCuntz cone $M$, one can use 
\cite[Corollary 4.5]{K1} which tells us that every lower semicontinuous linear 
functional $\Lambda$ on $M^*$ is the pointwise supremum of functionals
of the form $\widehat x_i$, $x_i\in M$. If we can show that we can
choose this set of $x_i$ 
to be directed, then we have a positive answer to our
question. Indeed, in this case 
the $y\llcurly x_i$ for some $i$ form a round ideal $J$ of $M$ such
that $\widehat J =\Lambda$. 

Robert \cite{R1} has investigated the relation between $M$ and the
double dual $M^{**}$ for Cuntz semigroups that are not already
cones. Here the problem is to embed $M$ into a cone  
which he succeeds by a kind of tensor product construction but under
additional hypotheses on the Cuntz semigroup. 
%
%
%
%
%
%



\section{Traces on C$^*$-algebras}

We now turn to C$^*$-algebras.
Let $A$ be a $C^*$-algebra. The elements of the form $a=xx^*$, $x\in
A$, are called \emph{positive}. These elements form a cone denoted $A_+$. On
$A_+$ we use the topology induced by the norm of the C$^*$-algebra and
the natural order $a\leq b$ if $b-a\in A_+$. We
refer to standard references for background material. 

A \emph{lower semicontinuous trace} is a lower semicontinuous monoid
homomorphism $t\colon A_+\to\oRp$ such that $t(xx^*) = t(x^*x)$ for
all $x\in A$. We denote by $T(A)$ the set of all traces. $T(A)$
becomes an ordered cone for the
pointwise defined order, addition and multiplication by real numbers
$r> 0$ . We would like to view $A_+$ as a predomain in such a way that
$T(A)$ is its dual. We let us guide by the basic example \ref{ex:basic}
$C_0(X)_+$ \ref{ex:basic}.

We remark that a lower semicontinuous trace satisfies $t(ra) =rt(a)$
for $r\in Rp$ and $a\in A_+$ and, hence, is a linear map on
$A_+$. This follows from the properties of being a monoid homomorphism
and lower semicontinuity.

\subsection{$A_+$ as a preCuntz semigroup} 

Every element $a\in A_+$ generates a commutative $C^*$-subalgebra
$C^*(a)$ of $A$. By Gelfand's representation theorem, there is an
isometrical isomorphism $i_a\colon C^*(a)\to C_0(X)$ for some locally
compact Hausdorff space $X$. We denote by $(a-\varepsilon)_+$ the element of
$C^*(a)$ that corresponds to the function $(i_a(a)-\varepsilon)_+$
in $C_0(X)$.

As a first try we define $a\ll b$ for elements $a,b\in A_+$ if $a\leq
(b-\varepsilon)_+$ for some $\varepsilon > 0$. 
In this way $A_+$ becomes a predomain. We first
check the interpolation property. We have
indeed $0\ll b$ for every $b\in A_+$ and if $a_i\ll c$ for $i=1,2$,
then $a_i\leq  (b-\varepsilon)_+$ for some $\varepsilon>0$. For
$c=b-\frac{\varepsilon}{2}$ we then have 
$a_i\ll c \ll b$ for $i=1,2$. Transitivity follows from the
fact that $a\ll b$ implies 
$a\leq b$ and that $a\leq b \ll c$ implies $a\ll c$.

The relation $\ll$ just defined will not have the desired
properties. Following Cuntz and Pedersen \cite{CP}, one should take in
account an equivalence relation that identifies elements that are
identified by every lower semicontinuous trace.
Since traces identify the elements $xx^*$ and $x^*x$, we consider
$xx^*$ and $x^*x$ 
to be equivalent. For a sequence $(x_i)_i$ of elements in $A$, if the
sums $\sum_ix_ix_i^*$ and $\sum_ix_i^*x_i$ both converge, a lower
semicontinuous trace will also identify these two sums.   

 According to \cite{CP}, two elements $a$ and $a'$ in $A_+$
are \emph{Cuntz-Pedersen equivalent} and we write $a\sim a'$ if  there
is a sequence  $x_n$ in  $A$ such that  $a = \sum_n x_nx_n^*$ and $
a'=\sum_n x_n^*x_n $. 

The relation $\sim$ is indeed an equivalence relation (transitivity is
by no means straightforward). 
 Moreover, $\sim$ is countably additive, that is,  $a_n\sim b_n$
 implies $\sum_n a_n\sim \sum_n b_n$ provided that the respective
 infinite sums converge. We refer to \cite[Section 2]{CP} for proofs.
 Clearly, $\sim$ is a congruence relation, that is, for all
$a,a',b\in A_+$ and $r\in \Rp$ one has: 
\[a\sim a' \mbox{ implies } a+b\sim a'+b, ra\sim ra' \]

The \emph{Cuntz-Pedersen preorder} on $A_+$ is defined by: \[a
 \precsim b \mbox{ if there is an } a' \in A \mbox{ such that } a \sim
 a' \leq b  \eqno{\rm (CPP)}\] 
Note that $a\leq b$ implies $a\precsim b$. 

We want to replace the Cuntz-Pedersen preorder by a relation that we
like to call the \emph{Cuntz-Pedersen approximation relation}
$\llcurly$ defined as follows:
\[a \llcurly b \mbox{ if there is a }
  a'\in A_+ \mbox{ and an } \varepsilon >0 \mbox{ such
  that } a\sim a' \leq (b-\varepsilon)_+ \]
Equivalently:
\[a \llcurly b \mbox{ if there are an } \varepsilon >0 
 \mbox{ such
  that } a\precsim  (b - \varepsilon)_+ \] 
We note that in particular,
$(b-\varepsilon)_+ \llcurly (b - \frac{\varepsilon}{2})_+
\llcurly b$.  

\begin{proposition}\label{prop:aplus}
For every C$^*$-algebra $A$, $(A_+,+,0,\llcurly)$ is a first
countable  preCuntz semigroup.
\end{proposition}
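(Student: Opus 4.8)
The plan is to verify the four ingredients of a first countable preCuntz semigroup one at a time: that $\llcurly$ is a predomain relation (transitivity and interpolation), that addition is separately — hence, by Proposition~\ref{prop:joint}, jointly — continuous, that $\llcurly$ is additive, and finally that $(A_+,\llcurly)$ is first countable. Throughout I would rely on three elementary facts about the functional calculus of the positive part, namely the identity $((c-s)_+-t)_+=(c-(s+t))_+$, the monotonicity $s\le s'\Rightarrow (c-s')_+\le (c-s)_+$, and the estimate $\|c-(c-s)_+\|\le s$ together with $0\le (c-s)_+\le c$; and on the three structural properties of the Cuntz--Pedersen relations recorded above: that $\precsim$ is a preorder with $x\le y\Rightarrow x\precsim y$, that $\sim$ is a congruence (so that $\precsim$ is additive, i.e.\ $a\precsim a'$ and $b\precsim b'$ imply $a+b\precsim a'+b'$), and the countable additivity of $\sim$ from \cite{CP}.

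First countability I expect to be immediate: for fixed $b$ the sequence $a_n=(b-\tfrac1n)_+$ satisfies $a_n\llcurly a_{n+1}$, because $a_n=(a_{n+1}-(\tfrac1n-\tfrac1{n+1}))_+$ by the first identity, and it is $\llcurly$-cofinal in $\dda b$, since any $a\llcurly b$ has $a\precsim (b-\varepsilon)_+=(a_n-(\varepsilon-\tfrac1n))_+$ for $\tfrac1n<\varepsilon$, whence $a\llcurly a_n$. Transitivity is then purely formal, provided one resists transporting a cut through $\sim$: from $a\llcurly b\llcurly c$ one has $a\precsim (b-\varepsilon)_+\le b$ and $b\precsim (c-\delta)_+$, so transitivity of $\precsim$ gives $a\precsim (c-\delta)_+$, that is $a\llcurly c$. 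For interpolation it suffices to check (IP0) and (IP2): here $0\llcurly c$ because $0\le (c-\varepsilon)_+$, and if $a_i\precsim (c-\varepsilon_i)_+$ for $i=1,2$, then with $\varepsilon=\min(\varepsilon_1,\varepsilon_2)$ and $b=(c-\tfrac\varepsilon2)_+$ one gets $a_i\precsim (c-\varepsilon)_+=(b-\tfrac\varepsilon2)_+$, so $a_i\llcurly b$, while $b\le (c-\tfrac\varepsilon4)_+$ gives $b\llcurly c$. None of this needs more than the functional-calculus identities and $x\le y\Rightarrow x\precsim y$.

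The genuine C$^*$-algebraic content lives in the two comparison lemmas coupling addition with cutting. For continuity of $x\mapsto a+x$ I would invoke a perturbation-of-cuts lemma of R\o rdam type: if $\|u-v\|\le\sigma$ and $\eta<\varepsilon-\sigma$, then $(u-\varepsilon)_+\precsim (v-\eta)_+$, keeping the conjugating element a contraction so that Cuntz subequivalence upgrades to $\precsim$. Applied to $u=a+x$ and $v=a+x'$ with $x'=(x-\tfrac\varepsilon2)_+$ (so $\|u-v\|\le\tfrac\varepsilon2$), this turns $d\precsim (a+x-\varepsilon)_+$ into $d\precsim (a+x'-\tfrac\varepsilon4)_+$, i.e.\ $d\llcurly a+x'$ with $x'\llcurly x$, which is exactly the continuity criterion. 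The same conjugation argument establishes the companion inequality $(x+y-\varepsilon)_+\precsim (x-\tfrac\varepsilon4)_++(y-\tfrac\varepsilon4)_+$, using that $(x-\tfrac\varepsilon4)_++(y-\tfrac\varepsilon4)_+\ge (x+y)-\tfrac\varepsilon2\cdot 1$ and conjugating $(x+y-\varepsilon)_+$ into a subelement.

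Additivity of $\llcurly$ is the hard direction and the step I would flag as the crux. From $a\precsim (a'-\varepsilon)_+$ and $b\precsim (b'-\varepsilon)_+$ the additivity of $\precsim$ gives $a+b\precsim (a'-\varepsilon)_++(b'-\varepsilon)_+$; to conclude $a+b\llcurly a'+b'$ one must show that this \emph{sum of cuts is dominated by a single cut of the sum}, $(a'-\varepsilon)_++(b'-\varepsilon)_+\precsim (a'+b'-\eta)_+$ for some $\eta>0$. This holds pointwise (with $\eta=\varepsilon$) in the commutative case, but fails as an operator inequality for non-commuting $a',b'$, and the conjugation trick that settles the previous direction only \emph{shrinks} the element one is trying to bound, so it does not apply here. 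I expect to have to feed in the Cuntz--Pedersen comparison machinery of \cite{CP} — constructing the equivalence-implementing sequence $(x_n)$ directly and using the countable additivity of $\sim$ — exactly as the ingredients are assembled in \cite{ERS}. Once this inequality is in hand, additivity, and with it the preCuntz-semigroup structure, follows; everything else is bookkeeping with the functional calculus.
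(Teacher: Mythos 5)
Your proposal is correct and follows the same overall strategy as the paper: derive the predomain axioms (transitivity, (IP0), (IP2), and the cofinal sequence) from the functional-calculus identities together with transitivity of $\precsim$ and $x\le y\Rightarrow x\precsim y$, and reduce additivity and continuity of addition to the two Cuntz--Pedersen comparison inequalities. The one genuine divergence is in how those two inequalities are handled. The paper invokes \cite[Proposition 2.3]{ERS} wholesale for both (\ref{eq:(1)}) and (\ref{eq:(2)}); you instead prove the continuity half (\ref{eq:(2)}) directly from the Kirchberg--R{\o}rdam lemma in the quantitative form $\norm{u-v}\le\sigma$, $\eta<\varepsilon-\sigma$ $\Rightarrow(u-\varepsilon)_+\precsim(v-\eta)_+$ --- which is exactly the refinement (\ref{eq:2.2refined}) that the paper itself derives from \cite[Lemma 2.2]{KR} a page later, but for a different purpose (the implication $a\precsim b\Rightarrow a\precsim_{CP}b$) --- and you correctly note that separate continuity of $x\mapsto a+x$ suffices by Proposition \ref{prop:joint}, so your argument is complete there. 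You also correctly diagnose that this perturbation trick cannot deliver the additivity inequality $(a'-\varepsilon)_++(b'-\varepsilon)_+\precsim(a'+b'-\eta)_+$ and defer it to the machinery of \cite{CP} as assembled in \cite{ERS}; since the paper likewise treats this as a citation, your proof is at the same level of rigor at the crux. Finally, your explicit verification of first countability via $a_n=(b-\tfrac1n)_+$, with $a_n\llcurly a_{n+1}$ and cofinality from $((b-\tfrac1n)_+-(\varepsilon-\tfrac1n))_+=(b-\varepsilon)_+$, supplies a detail the paper's proof passes over in silence (it is only implicit in the remark $(b-\varepsilon)_+\llcurly(b-\tfrac\varepsilon2)_+\llcurly b$ and the analogy with $C_0(X)_+$). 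Net effect: your route is somewhat more self-contained on continuity and first countability, at the cost of re-deriving quantitative constants that \cite{ERS} packages once.
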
 

For the proof we first observe that $d\llcurly c$ implies $d\precsim
(c-\varepsilon)_+\leq c$ for some $\varepsilon >0$ whence $d\precsim c$.
We now show that $\llcurly$ endows $A_+$ with the structure of a
predomain.  

For transitivity, let $d\llcurly c \llcurly a$. Then $d \precsim c$ as
we just noticed and $c\precsim (a-\varepsilon)_+$. We infer
$d\precsim (a-\varepsilon)_+$ from the transitivity of $\precsim$ whence
$d\llcurly a$. 

For interpolation we notice that $0\llcurly a$
for every $a\in A_+$ so that we have
(Int0).  For (Int2), suppose that $c_i\llcurly a$ for $i=1,2$.
Then there is an $\varepsilon >0$ such that $c_i\precsim
(a-\varepsilon)_+$ for $i=1,2$. Since 
$(a-\varepsilon)_+ \llcurly (a-\frac{\varepsilon}{2})_+ \llcurly
a$, we may choose $c=(a-\frac{\varepsilon}{2})_+$ and we have
$c_i\llcurly c\llcurly a$ for $i=1,2$. 

It remains to show that addition preserves $\llcurly$ and is
continuous. For this we use a result by Elliott, Robert and Santiago
\cite[Proposition 2.3]{ERS}: Given $a,b\in A_+$ and
$\varepsilon >0$, there is a $\delta >0$ such that 
\begin{eqnarray}\label{eq:(1)}
(a-\varepsilon)_+ + (b-\varepsilon)_+ &\precsim& (a+b-\delta)_+\\ \label{eq:(2)}
(a+b -\varepsilon)_+ &\precsim& (a-\delta)_+ + (b-\delta)_+ 
\end{eqnarray}
Indeed, these two inequalities are equivalent to the following
properties which express the additivity and the continuity of the
relation $\llcurly$, respectively:
\begin{eqnarray}\label{eq:(3)}
a'\llcurly a, b' \llcurly b &\implies& a'+b'\llcurly a+b\\ 
\label{eq:(4)}
c\llcurly a+b &\implies& \exists a'\llcurly a, b'\llcurly b.\ \ c\llcurly a'+b'
\end{eqnarray}
This finishes the proof of Proposition \ref{prop:aplus}.\\

The natural preorder of the predomain $(A_+,\llcurly)$ according to
\ref{subsec:2.3} is defined by 
$a\precsim_{CP} b$ if $\dda a\subseteq \dda b$. More explicitly,
$a\precsim_{CP} b$  if for every 
$\varepsilon >0$ there is a $\delta>0$ such that
$(a-\varepsilon)_+\precsim (b-\delta)_+$. This preorder has already been
considered by Robert \cite{R}. Thus,
if $c'\precsim_{CP} c\llcurly a\precsim_{CP} a'$, then
$c'\llcurly a'$. 
From results due to Robert \cite{R} it follows that the converse is not
true, that is  $\dda a \subseteq \dda b$ does not imply $a\precsim
b$, in general.

It is natural to ask whether the natural preorder
$\precsim_{CP}$ agrees with the Cuntz-Pedersen preorder $\precsim$.
L.~Robert \cite[Proposition 2.1(iii)]{R} has proved the implication:
\begin{equation}\label{eq:CP}
a\precsim b \implies a\precsim_{CP}b
\end{equation}
But in the same paper, Robert \cite{R} exhibits an example that shows
that the converse does not hold, in general.

The proof for the implication (\ref{eq:CP}) is surprisingly
sophisticated. One refers to a lemma due to 
Kirchberg and R\o rdam \cite[Lemma 2.2]{KR}: 
If $\varepsilon > \norm{a - b}$ then there is a contraction
$d$ in $A$ such that $(a -\varepsilon)_+=dbd^*$. 
From this, one deduces \cite[Lemma 2.2]{ERS}: 
\begin{equation}\label{eq:2.2}
\norm{a-b}<\varepsilon \implies (a-\varepsilon)_+\precsim b
\end{equation}
One then shows the following refinement:
\begin{equation}\label{eq:2.2refined}
\norm{a-b}<\varepsilon \implies \exists \delta>0.\
(a-\varepsilon)_+\precsim (b-\delta)_+ 
\end{equation}
Suppose indeed $\norm{a-b}<\varepsilon$. Since $(b-\delta)_+$
converges (in norm) to $b$ for $\delta\to 0$, there is some $\delta>0$
such that still $\norm{a-(b-\delta)_+}<\varepsilon$. Now
(\ref{eq:2.2refined}) follows from  (\ref{eq:2.2}).

One further uses from \cite[Proposition 2.3]{ERS}  
\begin{eqnarray}\label{eq:11}
(xx^* -\varepsilon)_+ &\sim& (x^*x-\varepsilon)_+\end{eqnarray}
for every element $x$ of the C$^*$-algebra $A$ and $\varepsilon>0$. 
 From (\ref{eq:(2)}) (see also \cite[proof of
Proposition 2.1(i)]{R} we deduce:
\begin{eqnarray}
  \label{eq:(12)}
a \leq b &\implies& \forall \varepsilon>0.\ \exists \delta> 0.\
(a-\varepsilon)_+\precsim (b-\delta)_+
\end{eqnarray} 
Indeed, if $a\leq b$, then $b =
a+(b-a)$ and $b-a\in A_+$. Thus, for $\varepsilon >0$ we can find a
$\delta>0$ such that $(a-\varepsilon)_+ + ((b-a)-\varepsilon)_+ \precsim
(b-\delta)_+$. It follows that
$(a-\varepsilon)_+\precsim(b-\delta)_+$.

We are now ready for the proof of the implication (\ref{eq:CP}).
Suppose $a\precsim b$. There is a sequence $x_n$ of elements in $A$ such
that $a =\sum_{n=1}^\infty x_nx_n^*$ and $a'=\sum_{n=1}^\infty
x_n^*x_n\leq b$. Consider any $\varepsilon >0$. There is an $N$ such
that $\norm{a -\sum_{n=1}^N x_nx_n^*}<\varepsilon$. The following
chain of arguments shows that $a\precsim_{CP} b$:
\[\begin{array}{rcll}
(a-\varepsilon)_+&\precsim& (\sum_{n=1}^N x_nx_n^*-\delta)_+&
\mbox{for some } \delta>0 \mbox{ by (\ref{eq:2.2refined})}\\
  &\precsim& \sum_{n=1}^N (x_nx_n^* -\delta_1)_+&\mbox{for some }
  \delta_1>0 \mbox{ by (\ref{eq:(2)}) }\\ 
  &\sim& \sum_{n=1}^N(x_n^*x_n -\delta_1)_+ &\mbox{ by (\ref{eq:11})} \\
  &\precsim & (\sum_{n=1}^N x_n^*x_n - \delta_2)_+&\mbox{for some }
  \delta_2>0 \mbox{ by (\ref{eq:(1)}) }\\
  &\precsim& (b-\delta_3)_+& \mbox{for some } \delta_3>0 
   \mbox{ by (\ref{eq:(12)})} 
\end{array}\]

\subsection{The cone $T(A)$ of traces}

We are ready now to apply our results on the dual of a preCuntz
semigroup to the the preCuntz semigroup $(A_+,+,0,\llcurly)$ on the
positive cone of a C$^*$-algebra $A$. We first show that the cone
$T(A)$ of traces is the dual of the preCuntz semigroup
$(A_+,+,0,\llcurly)$:

\begin{lemma}
The lower semicontinuous traces on $A_+$ agree with the lower
semicontinuous monoid homomorphisms from the preCuntz semigroup
$(A_+,+,0,\llcurly)$ to $\oRp$. 
\end{lemma}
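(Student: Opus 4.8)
Both classes consist of monoid homomorphisms $A_+\to\oRp$, so the real content of the lemma is to reconcile two features: the \emph{trace identity} $t(xx^*)=t(x^*x)$ against its apparent absence, and the \emph{topology} witnessing lower semicontinuity --- the norm topology in the definition of a trace versus the c-space topology $\tau_{\llcurly}$ built from the Cuntz--Pedersen approximation relation and used to define the dual. The plan is to prove the two inclusions separately, using throughout three facts: that $(a-\varepsilon)_+\to a$ in norm as $\varepsilon\to 0$, that $(a-\varepsilon)_+\llcurly a$ (from $(a-\varepsilon)_+\llcurly(a-\tfrac{\varepsilon}{2})_+\llcurly a$ and transitivity), and that by Lemma \ref{lem:env} a monotone map $\varphi$ is $\tau_{\llcurly}$-lower semicontinuous exactly when $\varphi(a)=\sup_{a'\llcurly a}\varphi(a')$ for all $a$, the specialization preorder of $\tau_{\llcurly}$ being the natural preorder $\precsim_{CP}$.

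First I would show a norm-lower semicontinuous trace $t$ is $\tau_{\llcurly}$-lower semicontinuous. The key intermediate claim is that $t$ respects the Cuntz--Pedersen preorder, i.e. $a\precsim b\Rightarrow t(a)\le t(b)$. Since $a\le b$ gives $t(a)\le t(b)$ by additivity, it suffices to treat $a\sim a'$; here I would argue that norm-lower semicontinuity together with finite additivity forces countable additivity on norm-convergent sums, so that applying this to $a=\sum_n x_nx_n^*$, $a'=\sum_n x_n^*x_n$ and the trace identity term by term yields $t(a)=t(a')$. Granting this, $\precsim_{CP}$-monotonicity follows: if $a\precsim_{CP}b$ then for each $\varepsilon$ there is $\delta$ with $(a-\varepsilon)_+\precsim(b-\delta)_+\le b$, so $t((a-\varepsilon)_+)\le t(b)$, and letting $\varepsilon\to0$ with norm-lower semicontinuity and $(a-\varepsilon)_+\le a$ gives $t(a)=\sup_\varepsilon t((a-\varepsilon)_+)\le t(b)$. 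The same computation, together with $a'\llcurly a\Rightarrow a'\precsim a$, shows $t(a)=\sup_{a'\llcurly a}t(a')$, so $t=env(t)$ is $\tau_{\llcurly}$-lower semicontinuous and hence lies in the dual.

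Conversely, I would show a $\tau_{\llcurly}$-lower semicontinuous monoid homomorphism $\varphi$ is a norm-lower semicontinuous trace. For the trace identity, $\varphi$ preserves the specialization preorder $\precsim_{CP}$; since $xx^*\sim x^*x$ yields both $xx^*\precsim_{CP}x^*x$ and $x^*x\precsim_{CP}xx^*$ via $(\ref{eq:CP})$, monotonicity forces $\varphi(xx^*)=\varphi(x^*x)$. For norm-lower semicontinuity, fix $a$ with $\varphi(a)>r$; by $\tau_{\llcurly}$-lower semicontinuity choose $a'\llcurly a$ with $\varphi(a')>r$, say $a'\precsim(a-\varepsilon)_+$. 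For any $b$ with $\norm{a-b}<\varepsilon$, the refined comparison $(\ref{eq:2.2refined})$ supplies $\delta>0$ with $(a-\varepsilon)_+\precsim(b-\delta)_+\le b$, whence $a'\precsim b$, so $a'\precsim_{CP}b$ by $(\ref{eq:CP})$ and $\varphi(a')\le\varphi(b)$. Thus the norm ball of radius $\varepsilon$ about $a$ lies in $\{\varphi>r\}$, which is therefore norm-open, giving norm-lower semicontinuity and completing the identification.

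The main obstacle is the backward passage from the c-space topology to the norm topology, which is precisely where the Cuntz--Pedersen machinery underlying Proposition \ref{prop:aplus} does the work: inequality $(\ref{eq:2.2refined})$ (ultimately the Kirchberg--R\o rdam lemma) is what turns a norm-small perturbation into a $\precsim$-comparison, and $(\ref{eq:CP})$ is what allows a merely $\precsim_{CP}$-monotone map to detect the equivalence $xx^*\sim x^*x$. On the forward side the only delicate point is upgrading finite to countable additivity, which again leans on norm-lower semicontinuity; I would treat that carefully since the trace identity must be applied to the individual summands before the sums are reassembled.
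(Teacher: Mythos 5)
Your proof is correct and, for its core --- the equivalence of the two lower-semicontinuity conditions --- follows the same route as the paper: approximate $a$ from below by $(a-\varepsilon)_+\llcurly a$ with $(a-\varepsilon)_+\to a$ in norm for one direction, and use the Kirchberg--R\o rdam based comparison (\ref{eq:2.2refined}) together with Robert's implication (\ref{eq:CP}) to turn a norm-small perturbation of $a$ into membership in the $\tau_{\llcurly}$-open set $\{\varphi>r\}$ for the other (the paper works directly with openness of $U=\{t>r\}$, where you pass through $t=env(t)$ via Lemma \ref{lem:env}; these are equivalent formulations). Where you genuinely go beyond the paper is on the algebraic side of the identification: the paper's proof begins with ``a monoid homomorphism satisfying $t(a)=t(a')$ whenever $a\sim a'$,'' so it establishes the equivalence only for $\sim$-invariant homomorphisms, leaving implicit both that a norm-lower semicontinuous trace identifies Cuntz--Pedersen equivalent elements (asserted without proof earlier in the text) and that a $\tau_{\llcurly}$-lower semicontinuous monoid homomorphism satisfies the trace identity at all. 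You supply both bridges, and correctly: the finite-to-countable additivity upgrade (the partial sums $s_N$ satisfy $s_N\leq a$ and converge to $a$ in norm, so monotonicity gives $\sup_N t(s_N)\leq t(a)$ while norm-lower semicontinuity gives the reverse), applied termwise with $t(x_nx_n^*)=t(x_n^*x_n)$ to get $\sim$-invariance; and, conversely, the observation that $xx^*\sim x^*x$ yields both $xx^*\precsim_{CP}x^*x$ and $x^*x\precsim_{CP}xx^*$ via (\ref{eq:CP}), so that any map preserving the specialization preorder of $\tau_{\llcurly}$ is forced to satisfy the trace identity. Your version is thus in these two respects more complete than the paper's own proof, at the modest cost of invoking (\ref{eq:CP}) (whose proof the paper notes is ``surprisingly sophisticated'') also in the backward direction, where the paper's argument needs only the simpler (\ref{eq:2.2}).
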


\begin{proof}
Consider a monoid homomorphism $t\colon A_+\to\oRp$ satisfying
$t(a)=t(a')$ whenever $(a\sim a'$. We want to show that $\lambda$ is
lower semicontinuous for the norm topology on $A_+$ if and only if it
is lower semicontinuous for the predomain structure $\llcurly$. Thus
let $r$ be a nonnegative real number and look at the set $U=\{a\in
A_+\mid t(a)>0\}$. We have to show that $U$ is open for the norm
topology if and only if it is open for the c-space topology
$\tau_{\llcurly}$ associated with $\llcurly$.  

Suppose first that $U$ is open for the norm topology and look at any
element $a\in U$. Since  
$(a-\varepsilon)_+)$ converges to $a$ with respect to the norm, when
$\varepsilon$ goes to $0$, we have $(a-\varepsilon)_+)\in U$ for
$\varepsilon$ small enough. The we have found an element
$b=(a-\varepsilon)_+)\in U$ such that $b\llcurly a$. We secondly look
at any element $c\in A_+$ with $a\llcurly c$. Then there is an $a'$
such that $a\sim a'\leq (c-\varepsilon)_+$ for some $\varepsilon
>)$. Then $t(a)=t(a')\leq t(c-\varepsilon)_+\leq t(a)$ since a monoid
homomorphism on $A_+$ preserves the order $\leq$. Hence $r<t(a)\leq
t(c)$, that is, $c\in U$. 
Thus, $U$ is open for the c-space topology $\tau_{\llcurly}$.  

Suppose conversely that $U$ is open for the c-space topology
$\tau_{\llcurly}$ and choose any $a\in U$. We want to show that there
is an $\varepsilon >0$ such that $b\in U$ for every $b$ such that
$\norm{a-b}<\varepsilon$. There is an $\varepsilon >0$ such that
$(a-\varepsilon)_+ \in U$. For every $b$ with $\norm{a-b}<\varepsilon$
there is a $\delta>0$ such that $(a-\varepsilon)_+\precsim
b$ by (\ref{eq:2.2}), whence  $(a-\varepsilon)_+\precsim_{CP}
b$ by the previous lemma. And the latter implies $b\in U$. 
\end{proof}

We now may apply Theorem \ref{th:dualcone} and we obtain the following
improvement of results by Elliott, Robert, Santiago \cite{ERS}:

\begin{corollary} Let $A$ be a C$^*$-algebra and $T(A)$ the cone of
lower semicontinuous traces.

(a) Equipped with the topology $\tau_{iv}^*$, $T(A)$ is an ordered
compact topological cone, that is, addition and scalar multiplication
are order preserving and jointly continuous,  where $\R_{>0}$ is
endowed with the usual Hausdorff topology.

(b) Equipped with the weak$^*$upper topology, $T(A)$ is a
stably compact topological cone, that
is, addition and scalar multiplication are continuous, where $\R_{>0}$ is
endowed with the upper topology.

(c) Equipped with the lower topology $\tau_{lo}^*$, $T(A)$ is a
stably compact topological cone, that
is, addition and scalar multiplication are continuous, where $\R_{>0}$ is
endowed with the lower topology.
\end{corollary}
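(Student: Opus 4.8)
The plan is to obtain all three claims as immediate specializations of Theorem \ref{th:dualcone} to the preCuntz semigroup $M=(A_+,+,0,\llcurly)$. First I would invoke Proposition \ref{prop:aplus}, which guarantees that $(A_+,+,0,\llcurly)$ really is a preCuntz semigroup, so that Theorem \ref{th:dualcone} applies verbatim to its dual cone $(A_+)^*$. The entire corollary then amounts to transporting the conclusions of that theorem across the identification of $T(A)$ with $(A_+)^*$.

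The crucial identification is supplied by the preceding Lemma, which shows that the lower semicontinuous traces on $A_+$ are exactly the lower semicontinuous monoid homomorphisms $\varphi\colon A_+\to\oRp$ for the c-space topology $\tau_{\llcurly}$. Thus $T(A)$ and $(A_+)^*$ have the same underlying set. Since both inherit their pointwise order, pointwise addition, and scalar multiplication from $\LSC(A_+)$, the identity map is an isomorphism of ordered cones. Moreover the three topologies $\tau_{iv}^*$, $\tau_{up}^*$, $\tau_{lo}^*$ on $(A_+)^*$ are by definition the restrictions of the interval, upper, and lower topology on $\LSC(A_+)$, so under the identification they become precisely the corresponding weak$^*$ topologies on $T(A)$. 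Hence $T(A)$, equipped with each of these topologies, is isomorphic as a topological ordered cone to $(A_+)^*$ with the matching topology.

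With this in hand the three assertions follow at once: claim (a) is Theorem \ref{th:dualcone}(a), while claims (b) and (c) are the two halves of Theorem \ref{th:dualcone}(b). The only point requiring attention is the topology placed on the scalar domain $\R_{>0}$ in each case. The notion of topological cone used here (Section \ref{subsec:3.2}) prescribes, in agreement with the topology on the cone itself, the usual Hausdorff topology in the interval case, the upper topology in the weak$^*$upper case, and the lower topology in the lower case; this is exactly what the three parts of the corollary record.

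I expect no genuine obstacle, since all of the substantive work has been done upstream. The real content sits in Proposition \ref{prop:aplus}, where the additivity and continuity of $\llcurly$ rest on the Elliott--Robert--Santiago inequalities (\ref{eq:(1)})--(\ref{eq:(2)}); in the preceding Lemma, where the coincidence of norm-lower-semicontinuity with $\tau_{\llcurly}$-lower-semicontinuity is established by comparing the two topologies; and in Theorem \ref{th:dualcone}, whose proof reduces the compactness statements to Tychonoff via the retraction $env$. The corollary is therefore a bookkeeping step that carries those conclusions across the isomorphism $T(A)\cong(A_+)^*$, and the proof I would write is correspondingly short.
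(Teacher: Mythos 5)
Your proposal is correct and follows essentially the same route as the paper: the paper likewise derives the corollary by applying Theorem \ref{th:dualcone} to the preCuntz semigroup $(A_+,+,0,\llcurly)$ of Proposition \ref{prop:aplus}, using the preceding lemma to identify $T(A)$ with the dual cone $(A_+)^*$ together with its weak$^*$ topologies. Your added remark about matching the topology on $\R_{>0}$ to each case is a faithful reading of the paper's convention for topological cones and needs no change.
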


Subbases and convergence for the three topologies involved in the
above corollary can be described as in the text following the proof of
Theorem \ref{th:dualcone}.

The dual $T(A)^*$ of the cone of traces consisting of the lower
semicontinuous linear functionals from $T(A)$ to $\oRp$ contains the
round ideal completion $\cRI(A_+)$ of $(A_+,+,0,\llcurly)$ as a
subcone via the map $J\mapsto \widehat J$, where  
$\widehat J\colon T(A)\to \oRp$ is defined by $\widehat J(\varphi)=
\sup_{x\in J}\varphi(x)$. This map is also an order embedding by
\ref{lem:sep}. Our general question \ref{qu:bidual} can be reformulated in
this special case:

\begin{question}
Is the the dual $T(A)^*$ equal to the round ideal completion
$\cRI(A_+)$ of $(A_+,+,0,\llcurly)$? More precisely, given any lower
semicontinuous linear map $Lambda\colon T(A)\to \oRp$, is there a
round ideal $J$ in $(A_+,+,0,\llcurly)$ such that $\Lambda =\widehat J$. 
\end{question} 

The answer to this question is 'yes' for commutative C$^*$-algebras as
we have indicated after \ref{qu:bidual}.

\end{document}